\documentclass{amsart}

\usepackage{hyperref,amssymb,amsthm,bm,enumerate,mathtools,orcidlink,caption}

\newtheorem{maintheorem}{Theorem}
\newtheorem{theorem}{Theorem}[section]
\newtheorem{proposition}[theorem]{Proposition}
\newtheorem{lemma}[theorem]{Lemma}
\newtheorem{corollary}[theorem]{Corollary}
\newtheorem{conjecture}[theorem]{Conjecture}
\theoremstyle{definition}
\newtheorem{assumption}[theorem]{Assumption}
\newtheorem{problem}[theorem]{Problem}
\newtheorem{note}[theorem]{Note}

\renewcommand{\theenumi}{\roman{enumi}}

\newcommand{\arxiv}[1]{arXiv:\href{https://doi.org/10.48550/arXiv.#1}{#1}}

\title[On the spherical design properties]{On the spherical design properties of \\ a $P$- and $Q$-polynomial association scheme}

\author[J. Lansdown]{Jesse Lansdown\,\orcidlink{0000-0002-8087-1329}}
\address{School of Mathematical and Statistical Sciences, University of Galway, Galway, Ireland}
\email{jesse.lansdown@universityofgalway.ie}
\urladdr{https://www.jesselansdown.com/}
\author[W. J. Martin]{William J. Martin\,\orcidlink{0000-0002-2027-5859}}
\address{Department of Mathematical Sciences, Worcester Polytechnic Institute, Worcester, MA 01609, USA}
\email{martin@wpi.edu}
\author[A. Munemasa]{Akihiro Munemasa\,\orcidlink{0000-0002-2635-0733}}
\address{\href{https://www.math.is.tohoku.ac.jp/}{Research Center for Pure and Applied Mathematics}, Graduate School of Information Sciences, Tohoku University, Sendai 980-8579, Japan}
\address{School of Science, China University of Geosciences, Beijing, China}
\email{munemasa@tohoku.ac.jp}
\author[S. Suda]{Sho Suda\,\orcidlink{0000-0002-5280-2668}}
\address{Department of Mathematics, National Defense Academy of Japan, Yokosuka, Kanagawa 239-8686, Japan}
\email{ssuda@nda.ac.jp}
\author[H. Tanaka]{Hajime Tanaka\,\orcidlink{0000-0002-5958-0375}}
\address{\href{https://www.math.is.tohoku.ac.jp/}{Research Center for Pure and Applied Mathematics}, Graduate School of Information Sciences, Tohoku University, Sendai 980-8579, Japan}
\email{htanaka@tohoku.ac.jp}
\urladdr{https://hajimetanaka.org/}

\date{\today}

\begin{document}

\hypersetup{pdfborder={0 0 0}} 

\begin{abstract}
We show that the strength as a spherical design of the spherical embedding of a $P$- and $Q$-polynomial association scheme with at least three classes with respect to a $Q$-polynomial idempotent is at most five, provided that the multiplicity is at least three.
We also identify the examples that attain this upper bound on the strength.
Our result improves on Suda's earlier upper bound of eight [J. Combin.
Des. 19 (2011)], and is considered dual to the results of Lewis [Discrete Math. 223 (2000)] and Miklavi\v{c} [Electron.
J. Combin. 32 (2025)] concerning the girth of a $Q$-polynomial distance-regular graph with diameter and valency both at least three.
To establish our upper bound, we introduce and discuss a polynomial method that works by constructing an appropriate polynomial that vanishes at every point of the spherical embedding.
\end{abstract}

\maketitle

\hypersetup{pdfborder={0 0 1}} 

\section{Introduction}

For integers $m\geqslant 1$ and $t\geqslant 0$, a nonempty finite subset $\Omega$ of the unit sphere $S^{m-1}$ in $\mathbb{R}^m$ centered at the origin is called a \emph{spherical $t$-design} in $S^{m-1}$ if
\begin{equation*}
    \frac{1}{\omega_{m-1}}\int_{S^{m-1}} f(\bm{\xi})\,\mathrm{d}\sigma(\bm{\xi})=\frac{1}{|\Omega|} \sum_{\bm{\xi}\in\Omega}f(\bm{\xi})
\end{equation*}
for every polynomial $f(\bm{\xi})=f(\xi_1,\xi_2,\dots,\xi_m)$ of degree at most $t$, where the integral in the LHS is the surface integral, and $\omega_{m-1}$ denotes the surface volume of $S^{m-1}$.
Note that a spherical $t$-design is also a spherical $i$-design for $0\leqslant i<t$.
The concept of spherical designs is due to Delsarte, Goethals, and Seidel \cite{DGS1977GD}.
We refer the reader to \cite{BB2009EJC,BBTZ2017GC} for recent updates on spherical designs.

One of the main sources of spherical designs is spherical embeddings of \emph{association schemes}.
Let $(X,\mathcal{R})$ be a symmetric association scheme with $d\geqslant 1$ classes.
(Formal definitions begin in Section~\ref{sec: P- and Q-polynomial association schemes}.)
Let $E=E_i$ $(i>0)$ be one of its primitive idempotents, and let $m=\operatorname{rank}E$, the \emph{multiplicity} of $E$.
Then, $|X|m^{-1}E$ is a real Gram matrix with constant diagonal entries equal to one, so there exist unit vectors $\bm{\xi}_x\in\mathbb{R}^m$ $(x\in X)$ such that $\bm{\xi}_x\cdot\bm{\xi}_y=|X|m^{-1}E_{x,y}$ $(x,y\in X)$, where $\cdot$
 denotes the dot product on $\mathbb{R}^m$.
We call the set $\tilde{X}=\{\bm{\xi}_x:x\in X\}\subset S^{m-1}$ the \emph{spherical embedding} of $(X,\mathcal{R})$ with respect to $E$.
We remark that $\tilde{X}$ is uniquely determined up to orthogonal transformation.
A quick example is the twelve vertices of the regular icosahedron in $S^2$, which form a spherical $5$-design and are the spherical embedding of an association scheme with three classes.
Likewise, after rescaling, the $240$ vectors of the $\mathrm{E}_8$ root system and the $196560$ shortest vectors of the Leech lattice $\Lambda_{24}$ respectively form a spherical $7$-design in $S^7$ and a spherical $11$-design in $S^{23}$, and these also arise in this way.

Let $t(\tilde{X})$ denote the largest integer $t$ such that $\tilde{X}$ is a spherical $t$-design.
We call $t(\tilde{X})$ the (maximum) \emph{strength} of $\tilde{X}$.
In this paper, we focus primarily on the case where $(X,\mathcal{R})$ is $Q$-\emph{polynomial} and $E$ is a corresponding $Q$-polynomial idempotent.
In this case, the strength of $\tilde{X}$ (or more precisely, $t(\tilde{X})+1$) is considered a parameter dual to the girth $g(\Gamma)$ of a \emph{distance-regular} graph $\Gamma$, or equivalently, a $P$-\emph{polynomial} association scheme with a specified $P$-polynomial adjacency matrix.
This is explained, e.g., by Suda's formula for the strength in terms of the Krein parameters \cite[Theorem~3.1]{Suda2011JCD} (cf.~Theorem~\ref{t in terms of Krein numbers}).
For example, $t(\tilde{X})\geqslant 3$ if and only if the Krein parameter $a_1^*=0$, whereas $g(\Gamma)\geqslant 4$ if and only if the intersection number $a_1=0$.
See also \cite[Proposition~4.1]{CGS1978IM} and \cite[Lemma~2]{Munemasa2004EJC}.
Lewis \cite[Corollary~30]{Lewis2000DM} showed that $g(\Gamma)\leqslant 6$ if $\Gamma$ is $Q$-polynomial and has valency (or degree) $k\geqslant 3$, and very recently, Miklavi\v{c} \cite[Theorem~20]{Miklavic2025EJC} determined all such $\Gamma$ with $g(\Gamma)=6$.
Miklavi\v{c}'s classification involves two families: (a) the collinearity graphs of the generalized hexagons of order $(1,k-1)$ $(k\geqslant 3)$, which have fixed diameter $d=3$, and (b) the Odd graphs $O_{d+1}$ with unbounded diameter $d\geqslant 3$.
On the dual side, Suda \cite[Theorem~4.2]{Suda2011JCD} proved the bound $t(\tilde{X})\leqslant 8$ under the equivalent setting that $(X,\mathcal{R})$ is both $P$- and $Q$-polynomial and $E$ is $Q$-polynomial with $m\geqslant 3$.
Our main result improves on Suda's bound and mirrors Lewis' and Miklavi\v{c}'s results:

\begin{maintheorem}\label{main theorem}
Let $(X,\mathcal{R})$ be a $P$- and $Q$-polynomial association scheme with $d\geqslant 3$ classes, and let $E$ be a corresponding $Q$-polynomial idempotent, where we assume that the multiplicity $m\geqslant 3$.
Let $\tilde{X}\subset S^{m-1}$ be the spherical embedding of $(X,\mathcal{R})$ with respect to $E$.
Then, the strength $t(\tilde{X})$ of $\tilde{X}$ satisfies $t(\tilde{X})\leqslant 5$.
Moreover, we have $t(\tilde{X})=5$ if and only if $(X,\mathcal{R})$ is the association scheme of one of the following distance-regular graphs:
\renewcommand{\theenumi}{\alph{enumi}}\begin{enumerate}
\item\label{Taylor} a nonbipartite Taylor graph $(d=3)$ with intersection array $\{k,\mu,1;1,\mu,k\}$, where
\begin{align*}
k=\frac{(m-1)(m+2)}{2}, \qquad
\mu=\frac{m^2+m-4-(m-3)\sqrt{m+2}}{4};
\end{align*}
\item the Hermitian dual polar graph $[^2\!A_{2d-1}(2)]$.
\end{enumerate}
\end{maintheorem}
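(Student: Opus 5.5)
We would argue entirely on the dual (Krein) side, using the characterization of the strength through the Krein parameters (Theorem~\ref{t in terms of Krein numbers}). For a $Q$-polynomial idempotent $E=E_1$ with dual intersection numbers $a_i^*,b_i^*,c_i^*$, we have $t(\tilde X)\geqslant 3$ iff $a_1^*=0$, and $t(\tilde X)\geqslant 5$ iff in addition $a_2^*=0$ and $c_2^*=1$ (the dual of girth $\geqslant 6$). Reaching $t\geqslant 6$ needs further vanishing conditions on $q^k_{ij}$ with $i+j+k\leqslant 6$, roughly the dual of girth $\geqslant 7$. The plan is to show these conditions cannot hold in a $P$- and $Q$-polynomial scheme with $d\geqslant 3$ and $m\geqslant 3$, and then to classify the case $t=5$.

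\textbf{Upper bound.} We would use the polynomial method announced in the abstract. Take $\bm\xi_x$ for a fixed base vertex $x$. The inner products $\bm\xi_x\cdot\bm\xi_y$ take only the $d+1$ values $\theta^*_i/m$, where $\theta^*_i$ is the dual eigenvalue sequence. Because the scheme is $P$-polynomial, the vectors $\bm\xi_y$ with $y$ at distance $1$ from $x$ satisfy linear relations. The key one comes from the balanced set condition (Terwilliger): $\sum_{y\in\Gamma_i(x)}\bm\xi_y-\sum_{z\in\Gamma_i(w)}\bm\xi_z$ is a scalar multiple of $\bm\xi_x-\bm\xi_w$.

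Using these relations we would build a nonzero polynomial $f$ of degree at most $6$ on $\mathbb{R}^m$ that vanishes on all of $\tilde X$. A natural choice is a product of factors of the form $(\bm\xi\cdot\bm\xi_x-\theta^*_i/m)$ combined with a quadratic in $\bm\xi$ coming from the balanced set identity. We then pair $f$ with $f$ itself, or with a suitable harmonic polynomial of degree at most $6$. If $\tilde X$ were a $6$-design, the discrete average of $f^2$ (or of $f\cdot g$ with $\deg f+\deg g\leqslant 6$) would equal the sphere average. The discrete average is $0$, while the sphere average is positive as soon as $f\not\equiv 0$ on $S^{m-1}$. This gives the contradiction. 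The real content is to find an $f$ of degree at most $3$, so that $f^2$ has degree at most $6$, vanishing on $\tilde X$ and not identically zero on the sphere.

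Degree-$3$ annihilating polynomials are exactly what $t\geqslant 6$ forbids by the Delsarte/Fisher-type bound, so the construction has to use $d\geqslant 3$ and $Q$-polynomiality essentially. We would parametrize the scheme by Leonard's theorem: the $q$-Racah family and its limits, with $\theta^*_i$ given by the standard formulas. We would impose $a_1^*=a_2^*=0$ and $c_2^*=1$ and show that the extra $t\geqslant 6$ condition, for example $a_3^*=0$ together with the relevant $q^3_{12},q^3_{22}$ relations, forces either $d\leqslant 2$ or $m\leqslant 2$. I expect this casework over the Leonard-system families to be the main obstacle. The polynomial method is meant to replace most of it with a single uniform identity, and the hardest step is choosing $f$ so that nonvanishing on the sphere is evident using only $m\geqslant 3$.

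\textbf{Equality case.} For $t=5$ we impose $a_1^*=a_2^*=0$ and $c_2^*=1$ and solve within Leonard's classification. For $d=3$ we solve the feasibility equations directly. This should give the Taylor graphs, the antipodal double covers of $K_{k+1}$, where the $Q$-polynomial condition plus $t=5$ determine $k$ and $\mu$ in terms of $m$ as stated; the required integrality pins down the square root $\sqrt{m+2}$. For $d\geqslant 4$, and also for $d=3$ outside the Taylor case, the equations should force the dual polar type with $q=-2$ parameters, giving $[^2\!A_{2d-1}(2)]$. Conversely, for both families we would verify $a_1^*=a_2^*=0$ and $c_2^*=1$ from their known Krein parameters. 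The failure of $t\geqslant 6$ then follows from the upper bound.
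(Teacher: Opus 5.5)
Your proposal has genuine gaps in both halves. For the upper bound, you never produce the degree-three polynomial, and the tools you name cannot produce it.

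\begin{itemize}
\item The identity you cite is vacuous. For any primitive idempotent, $A_iE$ is a scalar multiple of $E$, so $\sum_{y\in\Gamma_i(x)}\bm{\xi}_y$ is already a multiple of $\bm{\xi}_x$. It carries no information about $Q$-polynomiality.
\item A single base vertex cannot work. Any polynomial in $\bm{\xi}\cdot\bm{\xi}_x$ alone that vanishes on $\tilde{X}$ must vanish at the $d+1$ distinct cosines, so it has degree at least $d+1\geqslant 4$.
\end{itemize}

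The missing idea is to use two adjacent vertices $x,y$. For every $z\in X$, the distances $\partial(x,z)$ and $\partial(y,z)$ are equal or consecutive. Leonard's theorem supplies one symmetric quadratic relation, valid for every consecutive pair of dual eigenvalues:
\begin{equation*}
    \theta_{i-1}^{*2}-\beta\theta_{i-1}^*\theta_i^*+\theta_i^{*2}-\gamma^*(\theta_{i-1}^*+\theta_i^*)=\varrho^* \qquad (1\leqslant i\leqslant d).
\end{equation*}
Write $u=\bm{\xi}\cdot\bm{\xi}_x$ and $v=\bm{\xi}\cdot\bm{\xi}_y$. Then the cubic $f=(u-v)\big(u^2-\beta uv+v^2-\tfrac{\gamma^*}{m}(u+v)-\tfrac{\varrho^*}{m^2}\big)$ vanishes on $\tilde{X}$. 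Using $m\geqslant 3$ and $\sigma_1\neq-1$, one checks that $u^2-\beta uv+v^2$ is not a multiple of $\bm{\xi}\cdot\bm{\xi}$, so $f|_{S^{m-1}}\neq 0$. Lemma~\ref{Bannai's observation} then gives $t(\tilde{X})<6$ with no casework.

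Your fallback, casework over Leonard systems, is not carried out. It also targets the wrong conditions: by Theorem~\ref{t in terms of Krein numbers}, the extra requirement for $t\geqslant 6$ is $c_3^*=3m/(m+4)$, not $a_3^*=0$.

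The equality case goes wrong at its root. $t(\tilde{X})\geqslant 5$ means $a_1^*=a_2^*=0$ and $c_2^*=2m/(m+2)$, not $c_2^*=1$; the girth analogy misleads here. With $c_2^*=1$, the trace identity $k=m_2=m(m-1)/c_2^*$ for a Taylor graph would give $k=m(m-1)$ instead of $(m-1)(m+2)/2$, and your converse check would fail for both families. Even with the correct criterion, ``solving the feasibility equations'' does not finish the job. You also need:
\begin{enumerate}
\item the Terwilliger--Vidunas identity $a_i^*(\theta_i-\theta_{i-1})(\theta_i-\theta_{i+1})=\gamma^*\theta_i^2+\omega\theta_i+\eta$, which turns $a_0^*=a_1^*=a_2^*=0$ into ``$E$ is $Q$-bipartite or almost $Q$-bipartite'';
\item the classifications of Curtin, Nomura, Dickie--Terwilliger and Dickie, to pass from parameters to actual graphs, followed by:
  \begin{itemize}
  \item family-by-family exclusions, e.g.\ an integrality argument on $\zeta$ for the $d=4$ double covers;
  \item a characterization by intersection array, \cite[Theorem~9.4.7]{BCN1989B}, to identify $[^2\!A_5(2)]$;
  \end{itemize}
\item a separate argument for the almost $Q$-bipartite case with $d=3$, which Dickie's classification does not cover.
\end{enumerate}

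In that last case, the single equation $c_2^*=2m/(m+2)$ leaves a one-parameter family $(q,s^*(q))$, $q<-2$, of candidate parameter sets. The paper rules this family out only through integrality of Dickie's combinatorial counts $L_2,L_3$:
\begin{itemize}
\item $0<L_2<2$ forces $L_2=1$;
\item $L_2=1$ confines $q$ to $(-6.2,-6.1)$;
\item on that interval $9<L_3<10$, so $L_3$ is not an integer.
\end{itemize}
All of this is verified by Gr\"obner-basis elimination and Sturm sequences. Nothing in your sketch plays this role, so the claim that the equations ``should force'' $q=-2$ is unsupported.
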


A nonbipartite \emph{Taylor graph} is a nonbipartite distance-regular antipodal double-cover with diameter $d=3$ of a complete graph.
The \emph{Hermitian dual polar graph} $[^2\!A_{2d-1}(2)]$ has as vertices the maximal (i.e., $d$-dimensional) isotropic subspaces of the $2d$-dimensional vector space $\mathbb{F}_4^{2d}$ over the finite field $\mathbb{F}_4$ with respect to a fixed nondegenerate Hermitian form, where two vertices $x$ and $y$ are adjacent if and only if $\dim x\cap y=d-1$; cf.~\cite[Section~9.4]{BCN1989B}.
That the graph $[^2\!A_{2d-1}(2)]$ gives rise to a spherical $5$-design was observed earlier by Munemasa \cite[Theorem~1]{Munemasa2004EJC}.
It should be remarked that these graphs have precisely two $Q$-polynomial idempotents.
For a nonbipartite Taylor graph $\Gamma$, the primitive idempotents associated with the second largest and the smallest eigenvalues are $Q$-polynomial (see, e.g., \cite[p.~431]{BCN1989B}), and the expressions for the parameters $k$ and $\mu$ above are the conditions for the one associated with the second largest eigenvalue.
However, the distance-$2$ graph $\Gamma_2$ of $\Gamma$ is also a nonbipartite Taylor graph, and the other $Q$-polynomial idempotent is associated with the second largest eigenvalue of $\Gamma_2$.
Hence, Theorem~\ref{main theorem}\,\eqref{Taylor} covers both $Q$-polynomial idempotents of $\Gamma$.
For the graph $[^2\!A_{2d-1}(2)]$, the $Q$-polynomial idempotents are again associated with the second largest and the smallest eigenvalues, but only the one associated with the smallest eigenvalue defines a spherical $5$-design.

The spherical $5$-designs obtained from the nonbipartite Taylor graphs as in Theorem~\ref{main theorem}\,\eqref{Taylor} are \emph{tight}.
Here, for a spherical $t$-design $\Omega$ in $S^{m-1}$, we have
\begin{equation}\label{Fisher bound}
    |\Omega|\geqslant \begin{cases} \dbinom{m+e-1}{e} + \dbinom{m+e-2}{e-1} & \text{if $t=2e$ is even}, \\[2.5mm] 2\dbinom{m+e-1}{e} & \text{if $t=2e+1$ is odd}, \end{cases}
\end{equation}
and $\Omega$ is called tight if equality holds above.
In general, every tight spherical $t$-design is the spherical embedding of a $Q$-polynomial association scheme with $\lceil t/2\rceil$-classes with respect to a $Q$-polynomial idempotent (see, e.g., \cite[p.~380]{DGS1977GD} and \cite[p.~322]{Godsil1993B}), and the tight spherical $5$-designs are, up to orthogonal transformation, shown to be in bijection with the isomorphism classes of nonbipartite Taylor graphs as in Theorem~\ref{main theorem}\,\eqref{Taylor}; see Corollary~\ref{Q-bipartite and d=3 imply P-polynomial}.
By this correspondence, all the known restrictions on the dimension $m$ for a tight spherical $5$-design apply to the graphs in Theorem~\ref{main theorem}\,\eqref{Taylor}.
For example, the tight spherical $5$-designs $\Omega$ in $S^{m-1}$ are antipodal (i.e., $\Omega=-\Omega$) and are in turn in bijection with the sets of equiangular lines in $\mathbb{R}^m$ that meet the Absolute Bound $m(m+1)/2$ by \cite[Theorems~5.12, 6.8]{DGS1977GD}, and hence it follows from \cite[Theorem~3.5]{LS1973JA} that either $m=3$, or $m+2$ is the square of an odd integer. 
See \cite{BMV2005SPMJ,NV2013SPMJ} for further restrictions on $m$.
Currently, examples of tight spherical $5$-designs are known only for $m\in\{3,7,23\}$. 
The vertices of the regular icosahedron give such an example with $m=3$.
We may remark that the two other aforementioned examples of spherical $7$- and $11$-designs are also tight.

The present paper is organized as follows.
Section~\ref{sec: P- and Q-polynomial association schemes} reviews basic facts about $P$- and $Q$-polynomial association schemes.
We recommend \cite{BBIT2021B,BI1984B,BCN1989B,DKT2016EJC,Godsil1993B,MT2009EJC} for more background material.
Section~\ref{sec: vanishing polynomials} introduces and discusses a polynomial method for bounding $t(\tilde{X})$.
This method works by constructing an $m$-variate polynomial that vanishes on $\tilde{X}$ but does not entirely vanish on $S^{m-1}$.
We prove the bound $t(\tilde{X})\leqslant 5$ in Theorem~\ref{main theorem} in this section (see Proposition~\ref{Q-polynomial}).
While it is possible to prove Theorem~\ref{main theorem} solely by the case-by-case approach in Section~\ref{sec: classification of schemes with t=5} and without relying on this method, this section is of independent interest.
In particular, Martin and Steele \cite{MS2015JAC} initiated the study of the ideal of the polynomial ring consisting of the polynomials that vanish on $\tilde{X}$.
Section~\ref{sec: classification of schemes with t=5} completes the proof of Theorem~\ref{main theorem} by classifying the $P$- and $Q$-polynomial association schemes such that $t(\tilde{X})=5$.
We first show that such association schemes are either $Q$-\emph{bipartite} or \emph{almost} $Q$-\emph{bipartite}, and then invoke the known classifications of the corresponding distance-regular graphs due to Curtin \cite{Curtin1998DM}, Nomura \cite{Nomura1995JCTB}, Dickie \cite{Dickie1995D}, and Dickie and Terwilliger \cite{DT1996EJC}.
The almost $Q$-bipartite case with $d=3$ was not fully covered by their (more specifically, Dickie's) results, so we discuss this case at length.
Section~\ref{sec: concluding remarks} concludes the paper with supplementary results and open problems.

\section{$P$- and $Q$-polynomial association schemes}\label{sec: P- and Q-polynomial association schemes}

Let $X$ be a finite set, and let $\mathbb{R}^{X\times X}$ (resp.~$\mathbb{C}^{X\times X}$) be the set of real (resp.~complex) matrices with rows and columns indexed by $X$.
Let $\mathcal{R}=\{R_0,R_1,\dots,R_d\}$ be a set of nonempty subsets of $X\times X$.
For every $i$, let $A_i\in\mathbb{R}^{X\times X}$ be the adjacency matrix of the (di)graph $(X,R_i)$.
The pair $(X,\mathcal{R})$ is called an \emph{association scheme} \emph{with} $d$ \emph{classes} if
\begin{enumerate}[({AS}1)]
\item\label{AS1} $A_0=I$, the identity matrix;
\item\label{AS2} $A_0+A_1+\cdots+A_d=J$, the all ones matrix;
\item $A_i^{\top}\in\{A_0,A_1,\dots,A_d\}$ for $0\leqslant i\leqslant d$;
\item\label{AS4} $A_iA_j$ is a linear combination of $A_0,A_1,\dots,A_d$ for $0\leqslant i,j\leqslant d$.
\end{enumerate}
By (AS\ref{AS1}) and (AS\ref{AS4}), the $\mathbb{C}$-vector space $\bm{A}$ spanned by the $A_i$ is a $\mathbb{C}$-subalgebra of $\mathbb{C}^{X\times X}$, called the \emph{adjacency algebra} of $(X,\mathcal{R})$.
The $A_i$ are linearly independent by (AS\ref{AS2}) and thus form a basis of $\bm{A}$.
Let $p_{i,j}^h\in\mathbb{R}$ $(0\leqslant i,j,h\leqslant d)$ denote the structure constants of $\bm{A}$ with respect to this basis, i.e.,
\begin{equation}\label{intersection numbers}
    A_iA_j=\sum_{h=0}^d p_{i,j}^h A_h \qquad (0\leqslant i,j\leqslant d).
\end{equation}
The $p_{i,j}^h$ are nonnegative integers and are called the \emph{intersection numbers} of $(X,\mathcal{R})$.
We say that $(X,\mathcal{R})$ is \emph{commutative} if the algebra $\bm{A}$ is commutative (i.e., $p_{i,j}^h=p_{j,i}^h$ for all $i,j,h$), and that $(X,\mathcal{R})$ is \emph{symmetric} if the $A_i$ are symmetric matrices.
By applying ${}^{\top}$ to both sides in \eqref{intersection numbers}, we immediately see that a symmetric association scheme is commutative.

From now on, assume that $(X,\mathcal{R})$ is a symmetric association scheme.
An ordering $A_0,A_1,\dots,A_d$ of the adjacency matrices is called $P$-\emph{polynomial} if for all $i,j,h$ $(0\leqslant i,j,h\leqslant d)$, we have $p_{i,j}^h=0$ (resp.~$p_{i,j}^h>0$) whenever one of $i,j,h$ is greater than (resp.~equal to) the sum of the other two.
In this case, we have
\begin{equation}\label{3-term recurrence}
    AA_i=b_{i-1}A_{i-1}+a_iA_i+c_{i+1}A_{i+1} \qquad (0\leqslant i\leqslant d),
\end{equation}
where we set $A=A_1$,
\begin{equation*}
    a_i=p_{1,i}^i, \qquad b_i=p_{1,i+1}^i, \qquad c_i=p_{1,i-1}^i
\end{equation*}
for $0\leqslant i\leqslant d$, and where $b_{-1}A_{-1}=c_{d+1}A_{d+1}:=0$.
This means that the graph $\Gamma=(X,R_1)$ is a \emph{distance-regular} graph with diameter $d$ and that each $A_i$ is the $i^{\mathrm{th}}$ distance matrix of $\Gamma$.
The $P$-polynomial ordering $A_0,A_1,\dots,A_d$ is determined by $A=A_1$, and hence we also say that the adjacency matrix $A$ is $P$-\emph{polynomial}.
Note that $\Gamma$ is regular with valency (or degree) $k=b_0$, and that $c_1=1$, $c_0=b_d=0$, and $a_i=k-b_i-c_i$ $(0\leqslant i\leqslant d)$.
Recall the \emph{intersection array} of $\Gamma$:
\begin{equation}\label{intersection array}
    \{b_0,b_1,\dots,b_{d-1};c_1,c_2,\dots,c_d\}.
\end{equation}
We say that $(X,\mathcal{R})$ is $P$-\emph{polynomial} if it possesses a $P$-polynomial adjacency matrix.

For the rest of this section, we assume that $A_0,A_1,\dots,A_d$ is a $P$-polynomial ordering.
The adjacency matrix $A=A_1$ of $\Gamma$ has exactly $d+1$ distinct eigenvalues $\theta_0,\theta_1,\dots,\theta_d\in\mathbb{R}$ because $A$ generates the adjacency algebra $\bm{A}$ by \eqref{3-term recurrence}.
For $0\leqslant i\leqslant d$, let $E_i\in\mathbb{R}^{X\times X}$ denote the orthogonal projection onto the eigenspace of $A$ associated with $\theta_i$.
Then, $E_0+E_1+\cdots+E_d=I$, $E_iE_j=\delta_{i,j}E_i$ $(0\leqslant i,j\leqslant d)$, and we have
\begin{equation}\label{eigenvalues}
    A=\sum_{i=0}^d \theta_i E_i.
\end{equation}
The $E_i$ are polynomials in $A$ and form another basis of $\bm{A}$.\footnote{We are assuming here that $(X,\mathcal{R})$ is $P$-polynomial, but this is just for ease of exposition. It is known that the adjacency algebra of every commutative association scheme has the basis of the primitive idempotents; see, e.g., \cite[Section~2.3]{BI1984B}.}
We call the $E_i$ the \emph{primitive idempotents} of $(X,\mathcal{R})$.
We always set
\begin{equation}\label{theta0}
    \theta_0:=k=b_0,
\end{equation}
so that
\begin{equation}\label{E0}
    E_0=\frac{1}{|X|}J.
\end{equation}
For $0\leqslant i\leqslant d$, let
\begin{equation*}
    m_i=\operatorname{trace}(E_i)=\operatorname{rank}(E_i).
\end{equation*}
We call $m_i$ the $i^{\mathrm{th}}$ \emph{multiplicity} of $(X,\mathcal{R})$.

Let $E$ be one of the primitive idempotents and let $m$ denote its multiplicity.
Since $E\in\bm{A}$, there exist scalars $\theta_0^*,\theta_1^*,\dots,\theta_d^*\in\mathbb{R}$ such that
\begin{equation}\label{dual eigenvalues}
    E=\frac{1}{|X|}\sum_{i=0}^d \theta_i^* A_i.
\end{equation}
By (AS\ref{AS2}), the $\theta_i^*$ are the (not necessarily distinct) entries of the matrix $|X|E$.
We call the $\theta_i^*$ the \emph{dual eigenvalues} of $(X,\mathcal{R})$ \emph{associated with} $E$.
Observe by (AS\ref{AS1}) that
\begin{equation}\label{theta0*}
    \theta_0^*=m,
\end{equation}
and that the matrix
\begin{equation*}
    \frac{|X|}{m}E
\end{equation*}
is a rank-$m$ positive semidefinite matrix all of whose diagonal entries are equal to one.
Hence, it follows that there exists a set of points $\tilde{X}=\{\bm{\xi}_x:x\in X\}$ on the unit sphere $S^{m-1}\subset\mathbb{R}^m$ such that
\begin{equation*}
    \bm{\xi}_x\cdot \bm{\xi}_y= \sigma_{\partial(x,y)}:=\frac{\theta_{\partial(x,y)}^*}{m} \qquad (x,y\in X),
\end{equation*}
where $\cdot$ denotes the dot product on $\mathbb{R}^m$ and $\partial$ denotes the path-length distance function on $X$ with respect to $\Gamma$.
The set $\tilde{X}$ is unique up to orthogonal transformation.
We call $\tilde{X}$ the \emph{spherical embedding} of $(X,\mathcal{R})$ \emph{with respect to} $E$ and the $\sigma_i$ the \emph{cosines} of $(X,\mathcal{R})$ \emph{associated with} $E$.
Note that $\sigma_0=1$.
We will require that the $\sigma_i$ are distinct, and hence that the $\bm{\xi}_x$ are distinct.
It should be mentioned that this condition is quite weak.
When $k\geqslant 3$, if we arrange the eigenvalues in decreasing order $\theta_0>\theta_1>\cdots>\theta_d$, then the $\sigma_i$ are not all distinct if and only if either (i) $E=E_0$, (ii) $\Gamma$ is bipartite and $E=E_d$, or (iii) $\Gamma$ is antipodal and $E=E_i$ for an even $i$; cf.~\cite[pp.~140--143]{BCN1989B}.

\begin{assumption}\label{assumption}
For the rest of this paper, let $(X,\mathcal{R})$ be a symmetric association scheme with $d\geqslant 3$ classes, and assume that $(X,\mathcal{R})$ is $P$-polynomial with respect to the ordering $A_0,A_1,\dots,A_d$ of the adjacency matrices, unless otherwise stated.
Write $\Gamma=(X,R_1)$ and $A=A_1$, and recall the intersection array of $\Gamma$ from \eqref{intersection array}.
Let $\partial$ denote the path-length distance function on $X$ with respect to $\Gamma$.
Let $E$ be a primitive idempotent of $(X,\mathcal{R})$ with multiplicity $m\geqslant 3$, and assume that the dual eigenvalues $\theta_0^*,\theta_1^*,\dots,\theta_d^*$ (and hence the cosines $\sigma_0,\sigma_1,\dots,\sigma_d$) of $(X,\mathcal{R})$ associated with $E$ are distinct.
Let $\tilde{X}=\{\bm{\xi}_x:x\in X\}\subset S^{m-1}$ be the spherical embedding of $(X,\mathcal{R})$ with respect to $E$.
Note that the $\bm{\xi}_x$ are distinct.
\end{assumption}

With the above assumption, we have
\begin{equation}\label{sigma1 unequal to -1}
    \sigma_1\ne-1.
\end{equation}
Indeed, that $\sigma_1=-1$ means that $\bm{\xi}_x=-\bm{\xi}_y$ for adjacent vertices $x$ and $y$ in $\Gamma$, implying that $|X|=2$ and $d=1$, i.e., $\Gamma=K_2$, which is not the case here.
We also note that $K_2$ $(k=1)$ and the cycles $(k=2)$ satisfy $m_i\leqslant 2$ for all $i$.
Since $m\geqslant 3$, it follows that
\begin{equation}\label{k at least 3}
    k\geqslant 3.
\end{equation}

It follows from (AS\ref{AS2}) that $A_i\circ A_j=\delta_{i,j}A_i$ $(0\leqslant i,j\leqslant d)$, where $\circ$ denotes the entrywise (or \emph{Hadamard} or \emph{Schur}) product.
Hence, the adjacency algebra $\bm{A}$ is also closed under $\circ$, and there exist scalars $q_{i,j}^h\in \mathbb{R}$ $(0\leqslant i,j,h\leqslant d)$, known as the \emph{Krein parameters} of $(X,\mathcal{R})$, such that
\begin{equation*}
    E_i\circ E_j=\frac{1}{|X|}\sum_{h=0}^d q_{i,j}^h E_h \qquad (0\leqslant i,j\leqslant d).
\end{equation*}
The matrix $E_i\circ E_j$ is a principal submatrix of the positive semidefinite matrix $E_i\otimes E_j$ and thus is also positive semidefinite, from which it follows that the $q_{i,j}^h$ are nonnegative.
An ordering $E_0,E_1,\dots,E_d$ of the primitive idempotents is called $Q$-\emph{polynomial} if for all $i,j,h$ $(0\leqslant i,j,h\leqslant d)$, we have $q_{i,j}^h=0$ (resp.~$q_{i,j}^h>0$) whenever one of $i,j,h$ is greater than (resp.~equal to) the sum of the other two.
In this case, we have
\begin{equation}\label{3-term recurrence*}
 E\circ E_i=\frac{1}{|X|}\big(b_{i-1}^*E_{i-1}+a_i^*E_i+c_{i+1}^*E_{i+1}\big) \qquad (0\leqslant i\leqslant d),
\end{equation}
where we set $E=E_1$,
\begin{equation*}
    a_i^*=q_{1,i}^i, \qquad b_i^*=q_{1,i+1}^i, \qquad c_i^*=q_{1,i-1}^i
\end{equation*}
for $0\leqslant i\leqslant d$, and where $b_{-1}^*E_{-1}=c_{d+1}^*E_{d+1}:=0$.
It is not difficult to show that $m(=m_1)=b_0^*$, $c_1^*=1$, $c_0^*=b_d^*=0$, and $a_i^*=m-b_i^*-c_i^*$ $(0\leqslant i\leqslant d)$.
Recall the \emph{Krein array} of $(X,\mathcal{R})$ with respect to the above $Q$-polynomial ordering:
\begin{equation}\label{Krein array}
    \{b_0^*,b_1^*,\dots,b_{d-1}^*;c_1^*,c_2^*,\dots,c_d^*\}.
\end{equation}
By \eqref{3-term recurrence*}, the $Q$-polynomial ordering $E_0,E_1,\dots,E_d$ is determined by $E=E_1$, and hence we also say that the primitive idempotent $E$ is $Q$-\emph{polynomial}.
We note that the $d+1$ dual eigenvalues $\theta_0^*,\theta_1^*,\dots,\theta_d^*$ associated with $E$ are distinct because $E$ generates the adjacency algebra $\bm{A}$ with respect to $\circ$ by \eqref{3-term recurrence*}.
We say that $(X,\mathcal{R})$ is $Q$-\emph{polynomial} if it possesses a $Q$-polynomial idempotent.

For the rest of this section, assume that $E_0,E_1,\dots,E_d$ is a $Q$-polynomial ordering, and set $E=E_1$.
By computing $\operatorname{trace}(AE)$ in two ways using \eqref{eigenvalues}, \eqref{theta0}, \eqref{dual eigenvalues}, and \eqref{theta0*}, we find
\begin{equation}\label{Askey-Wilson}
    \frac{\theta_1}{k}=\frac{\theta_1^*}{m}.
\end{equation}
Moreover, by a theorem of Leonard \cite{Leonard1982SIAM}, \cite[Section~3.5]{BI1984B}, there exists a scalar $\beta\in\mathbb{R}$ such that the eigenvalues $\theta_0,\theta_1,\dots,\theta_d$ and the dual eigenvalues $\theta_0^*,\theta_1^*,\dots,\theta_d^*$ satisfy the following recurrence relations:
\begin{align}
    \theta_{i-2}-(\beta+1)\theta_{i-1}+(\beta+1)\theta_i-\theta_{i+1} &= 0 \qquad (2\leqslant i\leqslant d-1), \label{beta-recurrence} \\
    \theta^*_{i-2}-(\beta+1)\theta^*_{i-1}+(\beta+1)\theta^*_i-\theta^*_{i+1} &= 0 \qquad (2\leqslant i\leqslant d-1). \label{beta-recurrence*}
\end{align}
See also \cite[Section~8.1]{BCN1989B} and \cite[Theorem~1.9]{Terwilliger2001LAA}. 
From \eqref{beta-recurrence} and \eqref{beta-recurrence*} it follows that there exist scalars $\gamma,\gamma^*\in\mathbb{R}$ such that
\begin{align}
    \theta_{i-1}-\beta\theta_i+\theta_{i+1} &= \gamma\phantom{{}^*} \qquad (1\leqslant i\leqslant d-1), \label{beta,gamma-recurrence} \\
    \theta^*_{i-1}-\beta\theta^*_i+\theta^*_{i+1} &= \gamma^* \qquad (1\leqslant i\leqslant d-1). \label{beta,gamma-recurrence*}
\end{align}
Finally, from \eqref{beta,gamma-recurrence} and \eqref{beta,gamma-recurrence*} it follows that there exist scalars $\varrho,\varrho^*\in\mathbb{R}$ such that
\begin{align}
    \theta_{i-1}^2-\beta\theta_{i-1}\theta_i+\theta_i^2-\gamma(\theta_{i-1}+\theta_i) &= \varrho\phantom{{}^*} \qquad (1\leqslant i\leqslant d), \notag \\
    \theta_{i-1}^{*2}-\beta\theta_{i-1}^*\theta_i^*+\theta_i^{*2}-\gamma^*(\theta_{i-1}^*+\theta_i^*) &= \varrho^* \qquad (1\leqslant i\leqslant d). \label{beta,gamma,rho-recurrence*}
\end{align}
See \cite[Section~8]{Terwilliger2001LAA}.
We will use only the latter identity above.

\section{Vanishing polynomials}
\label{sec: vanishing polynomials}

For a nonempty finite subset $\Omega$ of $S^{m-1}$, define the (maximum) \emph{strength} $t(\Omega)$ of $\Omega$ to be the largest integer $t$ such that $\Omega$ is a spherical $t$-design in $S^{m-1}$.
Our polynomial method is based on the following observation due to Eiichi Bannai (personal communication):

\begin{lemma}\label{Bannai's observation}
Let $m$ be a positive integer and let $\Omega$ be a nonempty finite subset of $S^{m-1}$.
Let $f\in\mathbb{R}[\bm{\xi}]=\mathbb{R}[\xi_1,\xi_2,\dots,\xi_m]$ be a polynomial such that $f|_{S^{m-1}}$ is nonzero\footnote{Here, by `nonzero' we mean `not the zero function,' or `not identically zero.'} and that $f(\bm{\xi})=0$ for all $\bm{\xi}\in \Omega$.
Then, we have $t(\Omega)<2\deg f$.
\end{lemma}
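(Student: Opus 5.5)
The plan is to argue by contradiction using $f^2$. Suppose $t(\Omega)\geqslant 2\deg f$. Put $g=f^2$. Then $g$ is a polynomial of degree $2\deg f\leqslant t(\Omega)$, so the defining property of a spherical $t(\Omega)$-design applies to $g$. Because $f$ vanishes at every point of $\Omega$, the right-hand side of the design identity is
\begin{equation*}
\frac{1}{|\Omega|}\sum_{\bm{\xi}\in\Omega}f(\bm{\xi})^2=0 .
\end{equation*}
It follows that $\frac{1}{\omega_{m-1}}\int_{S^{m-1}}f(\bm{\xi})^2\,\mathrm{d}\sigma(\bm{\xi})=0$.

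The remaining step is to show this integral is in fact strictly positive, giving the contradiction. By hypothesis $f|_{S^{m-1}}$ is not identically zero, so there is a point $\bm{\eta}\in S^{m-1}$ with $f(\bm{\eta})\neq0$. The function $f$ is continuous, so $f^2>f(\bm{\eta})^2/2>0$ on some relatively open neighbourhood $U$ of $\bm{\eta}$ in $S^{m-1}$. Since $U$ is a nonempty open subset of the sphere, $\sigma(U)>0$, and therefore
\begin{equation*}
\int_{S^{m-1}}f^2\,\mathrm{d}\sigma\geqslant \tfrac12 f(\bm{\eta})^2\sigma(U)>0 .
\end{equation*}
This contradicts the vanishing obtained above, so $t(\Omega)<2\deg f$.

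Two degenerate cases should be checked. If $\deg f=0$, then $f$ is a nonzero constant (because $f|_{S^{m-1}}\neq0$), and such an $f$ cannot vanish on the nonempty set $\Omega$; so this case does not arise. If $m=1$, then $S^0=\{\pm1\}$ with counting measure, and the same argument goes through verbatim.

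No step is a genuine obstacle. The only point needing care is the positivity of the integral of a nonnegative continuous function that is not identically zero on the sphere, which is the open-set argument above.
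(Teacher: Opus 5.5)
Your proposal is correct and follows essentially the same argument as the paper: apply the $(2\deg f)$-design condition to $f^2$, note that its average over $\Omega$ is $0$, and contradict the positivity of $\int_{S^{m-1}}f^2\,\mathrm{d}\sigma$. The continuity and open-neighbourhood justification of that positivity, together with your checks of the degenerate cases, are details the paper leaves implicit.
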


\begin{proof}
Since $f|_{S^{m-1}}$ is nonzero, we have
\begin{equation*}
    \frac{1}{\omega_{m-1}} \int_{S^{m-1}} f^2(\bm{\xi})\,\mathrm{d}\sigma(\bm{\xi})>0,
\end{equation*}
whereas the average of the values $f^2(\bm{\xi})$ $(\bm{\xi}\in \Omega)$ equals zero.
Hence, $\Omega$ cannot be a spherical $(2\deg f)$-design.
\end{proof}

The above lemma enables us in the propositions below to bound $t(\tilde{X})$ for various classes of distance-regular graphs by constructing appropriate polynomials that vanish on $\tilde{X}$.
To show that these polynomials are nonzero on $S^{m-1}$, we will use (implicitly) the fact that the ring $\mathbb{R}[\bm{\xi}]$ is a unique factorization domain, and that the ideal consisting of the polynomials that vanish entirely on $S^{m-1}$ is generated by the irreducible quadratic polynomial $\bm{\xi}\cdot\bm{\xi}-1$, provided that $m>1$.
We begin with a warm-up:

\begin{proposition}\label{triangle}
Referring to Assumption~\ref{assumption}, assume that $a_1>0$.
Then, we have $t(\tilde{X})\leqslant 5$.
\end{proposition}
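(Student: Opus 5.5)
Since $a_1>0$, we may fix a triangle $x,y,z$ in $\Gamma$. By Lemma~\ref{Bannai's observation}, it suffices to construct a polynomial $f\in\mathbb{R}[\bm{\xi}]$ of degree at most $3$ that vanishes on $\tilde{X}$ but is not identically zero on $S^{m-1}$; this gives $t(\tilde{X})<6$.

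Every vertex $w\in X$ satisfies $\bm{\xi}_w\cdot\bm{\xi}_x\in\{\sigma_0,\dots,\sigma_d\}$, and the same holds for $y$ and $z$. Products of linear factors in the single variable $\bm{\xi}\cdot\bm{\xi}_x$ would have degree about $d+1$, which is too large. Instead I use the three linear forms $u=\bm{\xi}\cdot\bm{\xi}_x$, $v=\bm{\xi}\cdot\bm{\xi}_y$, $w=\bm{\xi}\cdot\bm{\xi}_z$ together. Since $x,y,z$ are mutually adjacent, the triangle inequality forces the distance triple $(\partial(x,w),\partial(y,w),\partial(z,w))$ to have pairwise differences at most $1$. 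So every $\bm{\xi}_w$ gives a triple $(u,v,w)=(\sigma_i,\sigma_j,\sigma_h)$ with $i,j,h\in\{\ell,\ell+1\}$ for some $\ell$. The natural candidate is the cubic
\begin{equation*}
f=(u-v)(v-w)(w-u).
\end{equation*}
Whenever two of the three distances coincide, one factor vanishes. Since each triple takes at most two values, two of them always agree, so $f$ vanishes at every point of $\tilde{X}$. Thus $\deg f\leqslant 3$, and the only real issue is showing that $f|_{S^{m-1}}\neq 0$.

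The restriction $f|_{S^{m-1}}$ vanishes identically if and only if $f$ is divisible by $\bm{\xi}\cdot\bm{\xi}-1$. Because $f$ is a product of linear forms, this can happen only if $f$ is the zero polynomial, using unique factorization and the irreducibility of the quadric for $m\geqslant 2$. So it suffices that $\bm{\xi}_x-\bm{\xi}_y$, $\bm{\xi}_y-\bm{\xi}_z$, and $\bm{\xi}_z-\bm{\xi}_x$ are nonzero vectors, which holds because the $\bm{\xi}$'s are distinct by Assumption~\ref{assumption}. Hence each linear factor is a nonzero linear form, and a product of nonzero linear forms is a nonzero polynomial that is not a multiple of an irreducible quadric.

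The delicate step is the claim that a nonzero product of linear forms cannot vanish on the whole sphere. A nonzero linear form vanishes on $S^{m-1}$ only on a hyperplane section. A finite union of such sections has empty interior in $S^{m-1}$, so $f$ cannot vanish everywhere there. This argument needs only $m\geqslant 2$, and $m\geqslant 3$ is assumed.
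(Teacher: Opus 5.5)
Your proof is correct and is essentially the paper's argument: the same cubic $(\bm{\xi}\cdot\bm{\xi}_x-\bm{\xi}\cdot\bm{\xi}_y)(\bm{\xi}\cdot\bm{\xi}_y-\bm{\xi}\cdot\bm{\xi}_z)(\bm{\xi}\cdot\bm{\xi}_z-\bm{\xi}\cdot\bm{\xi}_x)$ is built from a triangle. It vanishes on $\tilde{X}$ because the three distances from any vertex take at most two consecutive values, and it is nonzero on $S^{m-1}$ because each factor is a nonzero linear form. Your additional topological justification of the nonvanishing is a valid alternative to the unique-factorization remark, and the only blemish is that you use $w$ both for a vertex and for the form $\bm{\xi}\cdot\bm{\xi}_z$, so you should rename one of them.
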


\begin{proof}
Let $x,x'$, and $x''$ be three mutually adjacent vertices in $\Gamma$, which is possible because $a_1>0$.
Consider the polynomial\footnote{This cubic polynomial was shown to the authors by Paul Terwilliger.}
\begin{equation*}
    f(\bm{\xi}) =(\bm{\xi}\cdot\bm{\xi}_x - \bm{\xi}\cdot\bm{\xi}_{x'})(\bm{\xi}\cdot\bm{\xi}_{x'} - \bm{\xi}\cdot\bm{\xi}_{x''})(\bm{\xi}\cdot\bm{\xi}_{x''} - \bm{\xi}\cdot\bm{\xi}_x).
\end{equation*}
Observe that each of the linear factors
\begin{equation*}
    \bm{\xi}\cdot\bm{\xi}_x - \bm{\xi}\cdot\bm{\xi}_{x'}, \quad \bm{\xi}\cdot\bm{\xi}_{x'} - \bm{\xi}\cdot\bm{\xi}_{x''}, \quad \bm{\xi}\cdot\bm{\xi}_{x''} - \bm{\xi}\cdot\bm{\xi}_x
\end{equation*}
is nonzero on $S^{m-1}$ because $\bm{\xi}_x,\bm{\xi}_{x'}$, and $\bm{\xi}_{x''}$ are distinct.
Pick any $z\in X$, and let $i=\partial(x,z)$, $j=\partial(x',z)$, and $h=\partial(x'',z)$.
Then, we have
\begin{equation*}
    f(\bm{\xi}_z)=(\sigma_i-\sigma_j)(\sigma_j-\sigma_h)(\sigma_h-\sigma_i).
\end{equation*}
Without loss of generality, assume that $i\leqslant j\leqslant h$.
Since $x'$ and $x''$ are neighbors of $x$ in $\Gamma$, we have $j,h\in\{i,i+1\}$, from which it follows that $f(\bm{\xi}_z)=0$.
By Lemma~\ref{Bannai's observation}, we have $t(\tilde{X})<2\cdot 3=6$.
\end{proof}

In the same vein, we can also prove the following:

\begin{proposition}\label{bipartite}
Referring to Assumption~\ref{assumption}, assume that $\Gamma$ is bipartite.
Then, we have $t(\tilde{X})\leqslant 5$.
\end{proposition}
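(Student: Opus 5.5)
The plan is to imitate the proof of Proposition~\ref{triangle}: build a cubic polynomial that vanishes on $\tilde{X}$ but not on all of $S^{m-1}$, and then apply Lemma~\ref{Bannai's observation}. The triangle is no longer available, since $a_1=0$ for bipartite $\Gamma$. Instead I will use three neighbors of a single vertex together with a pigeonhole argument based on parity.

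Fix a vertex $x\in X$. By \eqref{k at least 3} we have $k\geqslant 3$, so we can choose three distinct neighbors $y_1,y_2,y_3$ of $x$ in $\Gamma$. Define
\begin{equation*}
    f(\bm{\xi})=(\bm{\xi}\cdot\bm{\xi}_{y_1}-\bm{\xi}\cdot\bm{\xi}_{y_2})(\bm{\xi}\cdot\bm{\xi}_{y_2}-\bm{\xi}\cdot\bm{\xi}_{y_3})(\bm{\xi}\cdot\bm{\xi}_{y_3}-\bm{\xi}\cdot\bm{\xi}_{y_1}).
\end{equation*}
The $\bm{\xi}_{y_\ell}$ are distinct by Assumption~\ref{assumption}, so each linear factor is a nonzero linear form. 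Each factor is therefore nonzero on $S^{m-1}$, because a hyperplane through the origin does not contain $S^{m-1}$. As in Proposition~\ref{triangle}, unique factorization in $\mathbb{R}[\bm{\xi}]$ then shows that the product $f$ is not divisible by the irreducible quadratic $\bm{\xi}\cdot\bm{\xi}-1$ when $m\geqslant 3$. Hence $f|_{S^{m-1}}$ is nonzero.

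Next I check that $f$ vanishes on $\tilde{X}$. Let $z\in X$ and put $i=\partial(x,z)$. Since each $y_\ell$ is adjacent to $x$, we have $\partial(y_\ell,z)\in\{i-1,i,i+1\}$. Because $\Gamma$ is bipartite, $\partial(y_\ell,z)$ has parity opposite to $i$, so in fact $\partial(y_\ell,z)\in\{i-1,i+1\}$. (If $i=0$, all three distances equal $1$.) Three distances take values in a set of at most two elements, so by pigeonhole two of them coincide, say $\partial(y_\ell,z)=\partial(y_{\ell'},z)$. Then $\bm{\xi}_z\cdot\bm{\xi}_{y_\ell}=\bm{\xi}_z\cdot\bm{\xi}_{y_{\ell'}}$, and the corresponding factor of $f(\bm{\xi}_z)$ is zero. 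Thus $f(\bm{\xi}_z)=0$ for every $z\in X$, and Lemma~\ref{Bannai's observation} gives $t(\tilde{X})<2\cdot 3=6$.

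The only step needing any care is the nonvanishing of $f$ on the sphere, and it is handled exactly as in Proposition~\ref{triangle}. The real idea is the choice of three neighbors of one vertex rather than a triangle, so that bipartiteness cuts the possible distance values from three down to two.
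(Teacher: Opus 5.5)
Your proof is correct and follows the paper's argument: the same cubic polynomial built from three neighbors of a common vertex, with bipartiteness forcing two of the three distances to any $z$ to coincide. The only difference is cosmetic. You measure distances from the central vertex, which gives values in $\{i-1,i+1\}$, whereas the paper measures them from one of the three neighbors, which gives $j,h\in\{i,i+2\}$.
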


\begin{proof}
Let $y\in X$, and let $x,x'$, and $x''$ be three distinct neighbors of $y$ in $\Gamma$, where we recall $k\geqslant 3$; cf.~\eqref{k at least 3}.
Observe that $x,x'$, and $x''$ are at distance two in $\Gamma$ from each other.
The rest of the proof proceeds exactly as in the proof of Proposition~\ref{triangle}, using the same degree-three polynomial $f(\bm{\xi})$.
The only difference is that we have $j,h\in\{i,i+2\}$ in the present case.
\end{proof}

The distance-regular graph $\Gamma$ is bipartite if and only if $a_0=a_1=\cdots=a_d=0$; see \cite[Proposition~4.2.2]{BCN1989B}.
We call $\Gamma$ \emph{almost bipartite} if $a_0=a_1=\cdots=a_{d-1}=0$ and $a_d>0$.
We note that if $\Gamma$ is almost bipartite, then the shortest odd cycles in $\Gamma$ have length $2d+1$.

\begin{proposition}
Referring to Assumption~\ref{assumption}, assume that $\Gamma$ is almost bipartite.
Then, we have $t(\tilde{X})\leqslant 5$.
\end{proposition}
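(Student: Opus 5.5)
The plan is to reuse the cubic polynomial from the proofs of Propositions~\ref{triangle} and~\ref{bipartite}, centred at a vertex as in Proposition~\ref{bipartite}. Fix $y\in X$ and pick three distinct neighbours $x,x',x''$ of $y$ in $\Gamma$, which is possible by \eqref{k at least 3}. Set
\begin{equation*}
    f(\bm{\xi}) =(\bm{\xi}\cdot\bm{\xi}_x - \bm{\xi}\cdot\bm{\xi}_{x'})(\bm{\xi}\cdot\bm{\xi}_{x'} - \bm{\xi}\cdot\bm{\xi}_{x''})(\bm{\xi}\cdot\bm{\xi}_{x''} - \bm{\xi}\cdot\bm{\xi}_x).
\end{equation*}
Under Assumption~\ref{assumption} the vectors $\bm{\xi}_x,\bm{\xi}_{x'},\bm{\xi}_{x''}$ are distinct. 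Hence each linear factor is a nonzero linear form, and its zero set is a hyperplane through the origin. A product of three such forms cannot vanish identically on $S^{m-1}$, since it is not divisible by the irreducible quadratic $\bm{\xi}\cdot\bm{\xi}-1$. So $f|_{S^{m-1}}$ is nonzero.

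Next I would show that $f(\bm{\xi}_z)=0$ for every $z\in X$. Let $i=\partial(y,z)$. Then $f(\bm{\xi}_z)=(\sigma_a-\sigma_b)(\sigma_b-\sigma_c)(\sigma_c-\sigma_a)$, where $a,b,c$ are the distances from $z$ to $x,x',x''$. It suffices to show that $a,b,c$ take at most two distinct values, because then two of them coincide by pigeonhole. Each of $a,b,c$ lies in $\{i-1,i,i+1\}$, since $x,x',x''$ are neighbours of $y$.

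Now split into two cases according to $i$.
\begin{enumerate}
\item If $i<d$, then $a_i=0$ (this includes $i=0$, i.e.\ $z=y$). So no neighbour of $y$ is at distance $i$ from $z$, and $a,b,c\in\{i-1,i+1\}$. For $i=0$ this simply says they all equal $1$.
\item If $i=d$, then no vertex is at distance $d+1$ from $z$, so $a,b,c\in\{d-1,d\}$.
\end{enumerate}
In both cases only two values occur, so $f(\bm{\xi}_z)=0$.

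Lemma~\ref{Bannai's observation} then gives $t(\tilde{X})<2\cdot 3=6$, i.e.\ $t(\tilde{X})\leqslant 5$.

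The only step needing care is the case analysis on $\partial(y,z)$. It is where the hypothesis $a_0=\cdots=a_{d-1}=0$ is used, and the boundary case $i=d$ is handled by the diameter rather than by $a_d$. Positivity of $a_d$ is not needed, so the argument also recovers Proposition~\ref{bipartite}.
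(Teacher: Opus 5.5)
Your proof is correct. It uses the same cubic polynomial and Lemma~\ref{Bannai's observation} as the paper, but you verify $f(\bm{\xi}_z)=0$ differently.

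The paper measures the distances from $z$ to $x,x',x''$ and orders them as $i\leqslant j\leqslant h$. Since $a_1=0$ makes $x,x',x''$ pairwise at distance two, this only gives $j,h\in\{i,i+1,i+2\}$. The paper therefore has to exclude the pattern $(i,i+1,i+2)$ separately: that pattern produces a closed walk of odd length $2i+3\leqslant 2d-1$ through $z,x,y,x'$, which contradicts the fact that the shortest odd cycles of an almost bipartite graph have length $2d+1$.

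You measure from the centre $y$ instead. This confines the three distances to $\{i-1,i,i+1\}$ with $i=\partial(y,z)$. Then $a_i=p^i_{1,i}=0$ for $i<d$, together with the diameter bound when $i=d$, removes one value outright.

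Your route is more economical. It reads the conclusion directly off the intersection numbers. It bypasses the odd-girth fact, which is itself derived from $a_0=\cdots=a_{d-1}=0$, and it never needs the mutual distance-two observation. As you note, it also covers the bipartite case at the same time.

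The paper's version keeps the argument parallel to the proof of Proposition~\ref{triangle} and makes the odd-cycle obstruction explicit, which suits its girth-duality theme. That framing is not needed for the result.
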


\begin{proof}
Let $y,x,x',x''\in X$ be as in the proof of Proposition~\ref{bipartite}.
Again, $x,x'$, and $x''$ are at distance two in $\Gamma$ from each other because $a_1=0$.
Using the same notation and the same degree-three polynomial $f(\bm{\xi})$, we have $j,h\in\{i,i+1,i+2\}$ in this case.
Assume now that $f(\bm{\xi}_z)\ne 0$.
Then, we must have $j=i+1$ and $h=i+2$.
In particular, $i\leqslant d-2$.
By joining a shortest path of length $i$ connecting $z$ and $x$, the path $(x,y,x')$ of length two, and a shortest path of length $j=i+1$ connecting $x'$ and $z$, we obtain a closed walk of odd length $2i+3\leqslant 2d-1$, but this contradicts the above comment.
It follows that $f(\bm{\xi}_z)=0$ for all $z\in X$.
\end{proof}

We now establish the same upper bound $t(\tilde{X})\leqslant 5$ under the condition that the primitive idempotent $E$ is $Q$-polynomial, which improves on Suda's upper bound $t(\tilde{X})\leqslant 8$ \cite[Theorem~4.2]{Suda2011JCD}:

\begin{proposition}\label{Q-polynomial}
Referring to Assumption~\ref{assumption}, assume that $E$ is $Q$-polynomial.
Then, we have $t(\tilde{X})\leqslant 5$.
Moreover, if $\Gamma$ is bipartite, then $t(\tilde{X})\leqslant 3$.
\end{proposition}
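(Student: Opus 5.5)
The plan is to apply Lemma~\ref{Bannai's observation} to a polynomial built from the three-term identity \eqref{beta,gamma,rho-recurrence*} for the dual eigenvalues. Fix two adjacent vertices $x,y$ of $\Gamma$, and use the two linear forms $u(\bm{\xi})=\bm{\xi}\cdot\bm{\xi}_x$ and $v(\bm{\xi})=\bm{\xi}\cdot\bm{\xi}_y$. For any $z\in X$ with $i=\partial(x,z)$ and $j=\partial(y,z)$, we have $|i-j|\leqslant 1$, and $(u,v)(\bm{\xi}_z)=(\sigma_i,\sigma_j)$. Dividing \eqref{beta,gamma,rho-recurrence*} by $m^2$ and using $\sigma_i=\theta_i^*/m$, define the quadratic
\begin{equation*}
g(\bm{\xi})=u^2-\beta uv+v^2-\frac{\gamma^*}{m}(u+v)-\frac{\varrho^*}{m^2}.
\end{equation*}
Because $g$ is symmetric in $u,v$, it satisfies $g(\bm{\xi}_z)=0$ whenever $\{i,j\}=\{l-1,l\}$ for some $1\leqslant l\leqslant d$, that is, whenever $|i-j|=1$.

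The remaining case is $i=j$, which I handle by multiplying by the linear factor $u-v$. Set
\begin{equation*}
f(\bm{\xi})=(u-v)\,g(\bm{\xi}),
\end{equation*}
a cubic. Then $f(\bm{\xi}_z)=0$ for every $z\in X$. Once $f|_{S^{m-1}}\neq 0$ is established, Lemma~\ref{Bannai's observation} gives $t(\tilde{X})<6$. In the bipartite case, $i\ne j$ always holds for adjacent $x,y$, so $g$ alone vanishes on $\tilde{X}$. If also $g|_{S^{m-1}}\neq0$, the lemma gives $t(\tilde{X})<4$, i.e.\ $t(\tilde{X})\leqslant 3$.

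The main step is nonvanishing on the sphere, which I reduce to unique factorization in $\mathbb{R}[\bm{\xi}]$ together with the fact that $\bm{\xi}\cdot\bm{\xi}-1$ generates the vanishing ideal of $S^{m-1}$.
\begin{enumerate}
\item Since $\sigma_1\neq\pm1$ (by \eqref{sigma1 unequal to -1} and the distinctness of the cosines), $\bm{\xi}_x$ and $\bm{\xi}_y$ are linearly independent. Hence $u$ and $v$ are linearly independent linear forms.
\item Consequently $u-v$ is a nonzero linear form. It therefore cannot be divisible by the quadratic $\bm{\xi}\cdot\bm{\xi}-1$, so it is nonzero on $S^{m-1}$.
\item The quadratic $g$ is a nonzero polynomial, since its $u^2$-coefficient is $1$ and $u,v$ are independent.
\item Moreover $g$ lies in $\mathbb{R}[u,v]$, a polynomial ring in only two independent linear forms. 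If $\bm{\xi}\cdot\bm{\xi}-1$ divided $g$, then, both having degree $2$, $g$ would be a scalar multiple of it. After an orthogonal change of coordinates with $u,v$ spanning the first two coordinates, this is impossible when $m\geqslant 3$, because $\bm{\xi}\cdot\bm{\xi}$ involves $\xi_3^2$. Alternatively, pick a unit vector orthogonal to $\bm{\xi}_x,\bm{\xi}_y$, where $g$ takes the value $-\varrho^*/m^2$, and a point of the sphere with $(u,v)$ ranging over a disk where $g$ is nonconstant.
\item Since $\bm{\xi}\cdot\bm{\xi}-1$ is irreducible and divides neither factor, it does not divide $f$, so $f|_{S^{m-1}}\neq0$.
\end{enumerate}

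I expect step (iv), and in particular the clean use of $m\geqslant3$, to be the main thing to check carefully. Some care is also needed to confirm that \eqref{beta,gamma,rho-recurrence*} holds for all $1\leqslant i\leqslant d$, covering every consecutive pair of distances.
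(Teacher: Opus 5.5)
Your proposal is correct and follows the paper's proof essentially step for step: it uses the same cubic $(\bm{\xi}\cdot\bm{\xi}_x-\bm{\xi}\cdot\bm{\xi}_y)\,g(\bm{\xi})$, the same vanishing argument via \eqref{beta,gamma,rho-recurrence*} for consecutive distances, and the same observation that the quadratic factor alone vanishes on $\tilde{X}$ when $\Gamma$ is bipartite. Your coordinate argument that $\bm{\xi}\cdot\bm{\xi}-1$ cannot divide $g$ (namely, $g$ involves only the two coordinates spanned by $\bm{\xi}_x,\bm{\xi}_y$, while $m\geqslant 3$) is a repackaging of the paper's evaluation of the degree-two part of $g$ at two test points on the sphere.
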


\begin{proof}
Let $x$ and $y$ be adjacent vertices in $\Gamma$, and consider the polynomial
\begin{align*}
    f(\bm{\xi}) &=(\bm{\xi}\cdot\bm{\xi}_x - \bm{\xi}\cdot\bm{\xi}_y) \label{degree-four polynomial} \\
    &\phantom{{}={}}\times \left( (\bm{\xi}\cdot\bm{\xi}_x)^2-\beta(\bm{\xi}\cdot\bm{\xi}_x)(\bm{\xi}\cdot\bm{\xi}_y)+(\bm{\xi}\cdot\bm{\xi}_y)^2-\frac{\gamma^*}{m}(\bm{\xi}\cdot\bm{\xi}_x+\bm{\xi}\cdot\bm{\xi}_y)-\frac{\varrho^*}{m^2}\right),
\end{align*}
where the scalars $\gamma^*$ and $\varrho^*$ are from \eqref{beta,gamma-recurrence*} and \eqref{beta,gamma,rho-recurrence*}.
The linear factor $\bm{\xi}\cdot\bm{\xi}_x - \bm{\xi}\cdot\bm{\xi}_y$ is nonzero on $S^{m-1}$ because $\bm{\xi}_x$ and $\bm{\xi}_y$ are distinct.
Likewise, the quadratic factor is also nonzero on $S^{m-1}$.
To see this, let $\bm{\xi}_1$ and $\bm{\xi}_2$ be points on $S^{m-1}$ such that $\bm{\xi}_1\cdot\bm{\xi}_x=\bm{\xi}_1\cdot\bm{\xi}_y=\bm{\xi}_2\cdot\bm{\xi}_x=0$ and $c:=\bm{\xi}_2\cdot\bm{\xi}_y\ne 0$, where we recall that $m\geqslant 3$, and that $\bm{\xi}_y\ne -\bm{\xi}_x$, or equivalently, $\sigma_1\ne -1$; cf.~\eqref{sigma1 unequal to -1}.
Then, we have
\begin{align*}
    (\bm{\xi}_1\cdot\bm{\xi}_x)^2-\beta(\bm{\xi}_1\cdot\bm{\xi}_x)(\bm{\xi}_1\cdot\bm{\xi}_y)+(\bm{\xi}_1\cdot\bm{\xi}_y)^2 &= 0, \\
    (\bm{\xi}_2\cdot\bm{\xi}_x)^2-\beta(\bm{\xi}_2\cdot\bm{\xi}_x)(\bm{\xi}_2\cdot\bm{\xi}_y)+(\bm{\xi}_2\cdot\bm{\xi}_y)^2 &= c^2>0,
\end{align*}
and hence the degree-two homogeneous part of the quadratic factor is not a scalar multiple of $\bm{\xi}\cdot\bm{\xi}$.
In particular, this factor is not a scalar multiple of $\bm{\xi}\cdot\bm{\xi}-1$.

Pick any $z\in X$, and let $i=\partial(x,z)$, $j=\partial(y,z)$.
Then, we have
\begin{equation*}
    f(\bm{\xi}_z)=(\sigma_i-\sigma_j)\left(\sigma_i^2-\beta\sigma_i\sigma_j+\sigma_j^2-\frac{\gamma^*}{m}(\sigma_i+\sigma_j)-\frac{\varrho^*}{m^2}\right).
\end{equation*}
Without loss of generality, assume that $i\leqslant j$.
If $i=j$, then $\sigma_i-\sigma_j=0$.
On the other hand, if $i\ne j$, then we have $i=j-1$, and
\begin{align*}
    \MoveEqLeft[4] \sigma_i^2-\beta\sigma_i\sigma_j+\sigma_j^2-\frac{\gamma^*}{m}(\sigma_i+\sigma_j)-\frac{\varrho^*}{m^2} \\
    &= \frac{1}{m^2}\big(\theta_{j-1}^{*2}-\beta\theta_{j-1}^*\theta_j^*+\theta_j^{*2}-\gamma^*(\theta_{j-1}^*+\theta_j^*)-\varrho^*\big) \\
    &=0
\end{align*}
by \eqref{beta,gamma,rho-recurrence*}.
It follows that $f(\bm{\xi}_z)=0$ for all $z\in X$.
By Lemma~\ref{Bannai's observation}, $t(\tilde{X})<2\cdot 3=6$.

If $\Gamma$ is bipartite, then the quadratic factor alone already vanishes on all the $\bm{\xi}_z$ because we always have $i\ne j$ above.
It follows that $t(\tilde{X})<2\cdot 2=4$ in this case.
\end{proof}

The conditions on $\Gamma$ in the above propositions are determined by its intersection array.
We end this section with another construction of a polynomial that depends on the geometric structure of $\Gamma$.
Recall that the (column) \emph{characteristic vector} $\chi=\chi_Y$ of a subset $Y$ of $X$ is the column vector whose entries are indexed by $X$, such that the $x$-entry $\chi_x$ equals $1$ if $x\in Y$ and $0$ otherwise $(x\in X)$.

\begin{proposition}\label{dual width one}
Referring to Assumption~\ref{assumption}, assume that there is a nonempty proper subset $Y$ of $X$ whose characteristic vector $\chi=\chi_Y$ satisfies $\chi=E_0\chi+E\chi$.
Then, we have $t(\tilde{X})\leqslant 3$.
\end{proposition}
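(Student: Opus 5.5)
The plan is to build a polynomial of degree two that vanishes on $\tilde{X}$ but is not identically zero on $S^{m-1}$, and then apply Lemma~\ref{Bannai's observation} to get $t(\tilde{X})<4$.

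\emph{Step 1: a linear function that is constant on $Y$ and on $X\setminus Y$.}
Since $\chi=E_0\chi+E\chi$, we can write $E\chi=\chi-E_0\chi=\chi-\frac{|Y|}{|X|}\bm{1}$, where $\bm{1}$ is the all-ones vector. Put
\begin{equation*}
    \bm{\eta}:=\sum_{y\in Y}\bm{\xi}_y\in\mathbb{R}^m .
\end{equation*}
For every $z\in X$,
\begin{equation*}
    \bm{\xi}_z\cdot\bm{\eta}=\frac{|X|}{m}\sum_{y\in Y}E_{z,y}=\frac{|X|}{m}(E\chi)_z .
\end{equation*}
Hence $\bm{\xi}_z\cdot\bm{\eta}$ equals $\alpha:=\frac{|X|}{m}\bigl(1-\frac{|Y|}{|X|}\bigr)$ when $z\in Y$, and equals $\alpha':=-\frac{|Y|}{m}$ when $z\notin Y$. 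Because $Y$ is a nonempty proper subset, $E\chi=\chi-\frac{|Y|}{|X|}\bm{1}\neq 0$, so $\bm{\eta}\neq 0$. We also have $\alpha\neq\alpha'$, since $\alpha-\alpha'=|X|/m$.

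\emph{Step 2: the vanishing polynomial.}
Define
\begin{equation*}
    f(\bm{\xi})=(\bm{\xi}\cdot\bm{\eta}-\alpha)(\bm{\xi}\cdot\bm{\eta}-\alpha').
\end{equation*}
By Step~1, $f(\bm{\xi}_z)=0$ for every $z\in X$. It remains to show that $f|_{S^{m-1}}$ is nonzero. The degree-two homogeneous part of $f$ is $(\bm{\xi}\cdot\bm{\eta})^2$. This has rank one, so it cannot be a scalar multiple of $\bm{\xi}\cdot\bm{\xi}$, because $m\geqslant 3>1$. Therefore $f$ is not a multiple of $\bm{\xi}\cdot\bm{\xi}-1$.

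The same point can be checked directly. Each linear factor is nonzero on $S^{m-1}$ because $\bm{\eta}\neq 0$, and a product of two factors that are each not identically zero on the irreducible sphere is not identically zero. More concretely, take a unit vector $\bm{\xi}\perp\bm{\eta}$; this is possible since $m\geqslant 2$. Then $f(\bm{\xi})=\alpha\alpha'$. If $\alpha\alpha'=0$, one can instead pick $\bm{\xi}$ with a suitably generic value of $\bm{\xi}\cdot\bm{\eta}$, which is available by continuity.

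\emph{Conclusion.}
Lemma~\ref{Bannai's observation} now gives $t(\tilde{X})<2\cdot 2=4$, that is, $t(\tilde{X})\leqslant 3$.

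Most of the work is the bookkeeping in Step~1, namely confirming that the values are constant on $Y$ and on its complement and that $\bm{\eta}\neq 0$. Nothing else in the argument appears difficult.
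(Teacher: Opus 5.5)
Your proof is correct and follows the paper's argument: the paper uses the same quadratic $f(\bm{\xi})=\bigl(\bm{\xi}\cdot\bm{\xi}_Y-|X\setminus Y|/m\bigr)\bigl(\bm{\xi}\cdot\bm{\xi}_Y+|Y|/m\bigr)$ with $\bm{\xi}_Y=\sum_{x\in Y}\bm{\xi}_x$, obtains its vanishing on $\tilde{X}$ from the two-valuedness of $\frac{|X|}{m}E\chi$, and then applies Lemma~\ref{Bannai's observation}. Your nonvanishing check on $S^{m-1}$ is also sound, and the fallback is never needed: $\alpha\alpha'=-|X\setminus Y|\,|Y|/m^2<0$, so any unit vector orthogonal to $\bm{\eta}$ already gives $f\neq 0$.
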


\begin{proof}
Note that $0<|Y|<|X|$.
By \eqref{E0}, we have $E_0\chi=|Y||X|^{-1}\bm{1}$, where $\bm{1}$ is the all ones vector.
This implies that
\begin{equation}\label{two values}
    \frac{|X|}{m}E\chi=\frac{|X|}{m}\chi-\frac{|Y|}{m}\bm{1} = \frac{|X\setminus Y|}{m}\chi-\frac{|Y|}{m}(\bm{1}-\chi).
\end{equation}
Let $\bm{\xi}_Y$ be the sum of the vectors $\bm{\xi}_x$ $(x\in Y)$, and consider the polynomial
\begin{equation*}
    f(\bm{\xi})=\left(\bm{\xi}\cdot\bm{\xi}_Y-\frac{|X\setminus Y|}{m}\right)\!\!\left(\bm{\xi}\cdot\bm{\xi}_Y+\frac{|Y|}{m}\right).
\end{equation*}
Then, it follows from \eqref{two values} that the two linear factors are nonzero on $S^{m-1}$ and that
\begin{equation*}
    f(\bm{\xi}_z)=\left(\frac{|X|}{m}(E\chi)_z-\frac{|X\setminus Y|}{m}\right)\!\!\left(\frac{|X|}{m}(E\chi)_z+\frac{|Y|}{m}\right)=0
\end{equation*}
for all $z\in X$.
By Lemma~\ref{Bannai's observation}, $t(\tilde{X})<2\cdot 2=4$.
\end{proof}

When $E$ is $Q$-polynomial, a nonempty subset $Y$ of the vertex set $X$ satisfying the condition in Proposition~\ref{dual width one} is said to have \emph{dual width} one in the sense of Brouwer, Godsil, Koolen, and Martin \cite{BGKM2003JCTA}.
If, moreover, $Y$ has \emph{width} (at most) $d-1$, i.e., $\partial(x,y)<d$ for all $x,y\in Y$, then $Y$ is called a \emph{descendent} of $\Gamma$ with dual width one (or width $d-1$); cf.~\cite{Tanaka2011EJC}.
Descendents were studied in detail in \cite{BGKM2003JCTA,Tanaka2006JCTA,Tanaka2011EJC}.
Many classical families of distance-regular graphs, such as the Hamming, Johnson, Grassmann, bilinear forms, and dual polar graphs, are known to have descendents with dual width one.
Hence, we have $t(\tilde{X})\leqslant 3$ for these graphs.
For example, every facet of the $d$-cube (or the $d$-dimensional hypercube) induces a $(d-1)$-cube and is a descendent with dual width one.
Note, however, that the definition of a descendent depends on $E$.
For example, the Hermitian dual polar graph $[^2\!A_{2d-1}(2)]$ has two $Q$-polynomial idempotents.
For one of them, this graph has descendents with dual width one and thus $t(\tilde{X})\leqslant 3$, whereas for the other, it has none and $t(\tilde{X})=5$ as claimed in Theorem~\ref{main theorem}.

\section{Proof of Theorem~\ref{main theorem}}
\label{sec: classification of schemes with t=5}

Recall Proposition~\ref{Q-polynomial}.
In this section, we complete the proof of Theorem~\ref{main theorem}.
Assumption~\ref{assumption} is in effect throughout the section (including in the theorems cited below).
In addition, we assume that the primitive idempotent $E$ is $Q$-polynomial, and that
\begin{equation}\label{t at least 5}
    t(\tilde{X})=5.
\end{equation}
Recall the corresponding Krein array of $(X,\mathcal{R})$ from \eqref{Krein array}. 
Our proof is based on the following result due to Suda \cite{Suda2011JCD}:

\begin{theorem}[{\cite[Theorem~3.1]{Suda2011JCD}}]\label{t in terms of Krein numbers} 
The spherical embedding $\tilde{X}$ of $(X,\mathcal{R})$ with respect to $E$ is a spherical $t$-design in $S^{m-1}$ if and only if the following hold:
\begin{equation*}
    a_i^*=0 \quad \left(0\leqslant i\leqslant \left\lfloor \frac{t-1}{2}\right\rfloor \right), \qquad c_i^*=\frac{mi}{m+2i-2} \quad \left(0\leqslant i\leqslant \left\lceil \frac{t-1}{2}\right\rceil \right).
\end{equation*}
\end{theorem}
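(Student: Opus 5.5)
The plan is to turn the design condition into a statement about moments of a discrete measure on $[-1,1]$, and then into a statement about three-term recurrence coefficients.

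\emph{Step 1 (design condition as moments).} By Delsarte--Goethals--Seidel, $\tilde{X}$ is a spherical $t$-design if and only if $\sum_{x,y\in X}G_l(\bm{\xi}_x\cdot\bm{\xi}_y)=0$ for $1\leqslant l\leqslant t$. Here $G_l$ denotes the Gegenbauer polynomial attached to $S^{m-1}$, normalized so that $G_l(1)=\dim\mathrm{Harm}_l(\mathbb{R}^m)$. Since $\bm{\xi}_x\cdot\bm{\xi}_y=\sigma_{\partial(x,y)}$, the double sum equals $|X|\sum_h k_hG_l(\sigma_h)$, where $k_h=p^0_{h,h}$. So we introduce the discrete probability measure $\mu=\frac{1}{|X|}\sum_h k_h\delta_{\sigma_h}$ on $[-1,1]$. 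The condition becomes $\int G_l\,d\mu=0$ for $1\leqslant l\leqslant t$. Equivalently, $\int p\,d\mu=\int p\,d\nu$ for every polynomial $p$ of degree at most $t$, where $d\nu\propto(1-s^2)^{(m-3)/2}ds$ is the Gegenbauer weight.

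\emph{Step 2 (two three-term recurrences).} Because $E$ is $Q$-polynomial, $E_i=\frac{1}{|X|}\sum_h v_i^*(\theta_h^*)A_h$ for polynomials $v_i^*$ of degree $i$. Write $q_i(s):=v_i^*(ms)$. These are orthogonal with respect to $\mu$, with $\int q_iq_j\,d\mu=\delta_{ij}m_i$, which follows from $E_iE_j=\delta_{ij}E_i$. Applying \eqref{3-term recurrence*} entrywise gives
\[
m\,s\,q_i(s)=b_{i-1}^*q_{i-1}(s)+a_i^*q_i(s)+c_{i+1}^*q_{i+1}(s),
\]
with $q_i(1)=m_i$ and $a_i^*+b_i^*+c_i^*=m$. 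On the sphere side we set $Q_i:=G_i$. The classical Gegenbauer recurrence takes exactly the same shape, namely $msQ_i=\beta_{i-1}Q_{i-1}+\gamma_{i+1}Q_{i+1}$. It has zero middle coefficient because $\nu$ is symmetric. Its coefficients are $\gamma_i=\frac{mi}{m+2i-2}$ and $\beta_i=m-\gamma_i$, and the normalization $Q_i(1)=\dim\mathrm{Harm}_i$ is the same one.

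\emph{Step 3 (matching).} Both families are orthogonal polynomial systems produced by a recurrence of the same normalized type, namely ``row sums $=m$, value at $1$ equals the $L^2$-norm''. For such a system the first few coefficients are determined by the first few moments, and conversely. The precise dependence is standard:
\begin{itemize}
\item the moments up to degree $2n$ determine $q_0,\dots,q_n$ up to scaling, and hence $a_0^*,\dots,a_{n-1}^*$ and the products $b_{i-1}^*c_i^*$ for $i\leqslant n$;
\item the moment of degree $2n+1$ additionally determines $a_n^*$.
\end{itemize}
Using $b_{i-1}^*=m-a_{i-1}^*-c_{i-1}^*$, the products determine the $c_i^*$ inductively. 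Conversely, equal coefficients give equal polynomials and hence equal moments. The argument is an induction on $n$ using $\int q_i\,d\mu=\delta_{i0}$ together with the recurrence. Applying this with $t=2n$ or $t=2n+1$, agreement of moments through degree $t$ is equivalent to $a_i^*=0$ for $i\leqslant\lfloor(t-1)/2\rfloor$ and $c_i^*=\frac{mi}{m+2i-2}$ for $i\leqslant\lceil(t-1)/2\rceil$.

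The main obstacle is this last bookkeeping. It requires checking carefully that the parity split gives exactly the floor and ceiling ranges, and that the normalization $q_i(1)=m_i$ lets us recover $c_i^*$ itself rather than only the product $b_{i-1}^*c_i^*$. I would handle it by an explicit induction. The inductive step computes $\int s^{2n}\,d\mu$ and $\int s^{2n+1}\,d\mu$ through the recurrence, so that the newly entering coefficient appears linearly with a nonzero multiplier.
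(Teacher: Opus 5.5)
Your proof is correct. The paper gives no argument of its own for this statement: it imports it verbatim from Suda [Theorem~3.1]. So there is nothing internal to compare against, and what you give is a sound, self-contained proof.

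The Delsarte--Goethals--Seidel criterion turns the design condition into agreement, through degree $t$, of the moments of the inner-distribution measure $\mu$ with those of the Gegenbauer weight $\nu$. The moment/Jacobi-parameter dictionary then does the rest:
\begin{itemize}
\item on the scheme side, $\alpha_i=a_i^*/m$ and $\lambda_i=b_{i-1}^*c_i^*/m^2$;
\item on the sphere side, $\alpha_i=0$ and $\lambda_i=\beta_{i-1}\gamma_i/m^2$.
\end{itemize}
This produces exactly the floor/ceiling ranges. Degree $2n$ controls $\alpha_0,\dots,\alpha_{n-1}$ and $\lambda_1,\dots,\lambda_n$, and degree $2n+1$ adds $\alpha_n$. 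Recovering $c_i^*$ itself from the products works because, inductively, $b_{i-1}^*=m-a_{i-1}^*-c_{i-1}^*=m-\gamma_{i-1}=\beta_{i-1}>0$. Your argument uses only the valencies of the relations, not the $P$-polynomial structure, which matches the paper's remark that Suda's theorem does not need it.

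One point to add for completeness concerns large $t$. Since $\mu$ has only $d+1$ atoms, its orthogonal polynomials exist only up to degree $d$, so your induction covers $t\leqslant 2d+1$. For $t\geqslant 2d+2$, the right-hand side refers to $c_{d+1}^*$, which is undefined. The left-hand side fails anyway: the polynomial $(1-s)\prod_{h=1}^{d}(s-\sigma_h)^2$ has degree $2d+1$, is nonnegative on $[-1,1]$, vanishes on the support of $\mu$, and has positive $\nu$-integral. Hence $t(\tilde{X})\leqslant 2d$.
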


\noindent
We note that \cite[Theorem~3.1]{Suda2011JCD} does not require that $(X,\mathcal{R})$ is $P$-polynomial.
By Theorem~\ref{t in terms of Krein numbers} and \eqref{t at least 5}, we have
\begin{equation}\label{5-design parameters}
    a_0^*=a_1^*=a_2^*=0, \qquad (c_0^*,c_1^*,c_2^*)=\left(0,1,\frac{2m}{m+2}\right).
\end{equation}
See also \cite[Lemma~2]{Munemasa2004EJC}.

Recall the scalar $\gamma^*$ from \eqref{beta,gamma-recurrence*}.
Terwilliger and Vidunas \cite[Theorem~5.3]{TV2004JAA} showed that there exist scalars $\omega,\eta\in\mathbb{R}$ such that
\begin{equation}\label{Terwilliger and Vidunas formula}
    a_i^*(\theta_i-\theta_{i-1})(\theta_i-\theta_{i+1})=\gamma^*\theta_i^2+\omega\theta_i+\eta \qquad (0\leqslant i\leqslant d),
\end{equation}
where $\theta_{-1}$ and $\theta_{d+1}$ are defined respectively by setting $i=0$ and $i=d$ in \eqref{beta,gamma-recurrence}.
It should be remarked that \cite[Theorem~5.3]{TV2004JAA} is a general result about linear algebraic objects called \emph{Leonard pairs} \cite{Terwilliger2001LAA}, and that \eqref{Terwilliger and Vidunas formula} is a special case when the Leonard pair is induced on the trivial (or primary) module for the \emph{Terwilliger algebra} \cite{Terwilliger1992JAC} of $(X,\mathcal{R})$; see, e.g., \cite{Cerzo2010LAA}.

By \eqref{5-design parameters}, the LHS in \eqref{Terwilliger and Vidunas formula} equals zero for $i\in\{0,1,2\}$.
Since $\theta_0,\theta_1$,  and $\theta_2$ are distinct, it follows that $\gamma^*=\omega=\eta=0$.
Then, again by \eqref{Terwilliger and Vidunas formula}, this in turn implies that
\begin{equation*}
    a_0^*=a_1^*=\cdots=a_{d-1}^*=0,
\end{equation*}
because the eigenvalues $\theta_0,\theta_1,\dots,\theta_d$ are distinct.
Here, we do not rule out the possibility to have $\theta_d=\theta_{d+1}$.
We say that the $Q$-polynomial idempotent $E$ is $Q$-\emph{bipartite} if $a_0^*=a_1^*=\cdots=a_d^*=0$ \cite[p.~241]{BCN1989B}, and that it is \emph{almost} $Q$-\emph{bipartite} if $a_0^*=a_1^*=\cdots=a_{d-1}^*=0$ and $a_d^*>0$.
We note that $E$ is $Q$-bipartite precisely when the distance-regular graph $\Gamma$ is an antipodal double-cover; cf.~\cite[Theorem~8.2.4]{BCN1989B}.
The bipartite $Q$-polynomial antipodal double-covers were classified by Curtin and Nomura; see \cite[Theorem~42]{Curtin1998DM} and \cite[Theorem~1.2]{Nomura1995JCTB}.
However, we have $t(\tilde{X})=3$ if $\Gamma$ is one of these graphs by Proposition~\ref{Q-polynomial} and since $a_1^*=0$, so they do not occur here. 
The classification of nonbipartite $Q$-polynomial antipodal double-covers was given by Dickie and Terwilliger \cite{DT1996EJC} as follows:

\renewcommand{\theenumi}{\roman{enumi}}
\begin{theorem}[{\cite[Theorem~1.1]{DT1996EJC}}]
Assume that $\Gamma$ is not bipartite and that $E$ is $Q$-bipartite.
Then, $\Gamma$ is one of the following:
\begin{enumerate}
\item\label{1st} the halved $2d$-cube;
\item\label{2nd} the Johnson graph $J(2d,d)$;
\item\label{3rd} a nonbipartite Taylor graph \textup{($d=3$)};
\item\label{4th} an antipodal double-cover with $d=4$ and
\begin{equation*}
    (c_1,c_2,c_3,c_4)=\big(1,\beta \zeta,(\beta^2-1)(2\zeta-\beta+1),\beta(2\zeta+2\beta\zeta-\beta^2)\big),
\end{equation*}
where $\beta\geqslant 3$ and $\zeta\geqslant 3\beta/4$ are integers, and $\zeta$ divides $\beta^2(\beta^2-1)/2$.
\end{enumerate}
\end{theorem}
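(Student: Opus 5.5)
This theorem is the classification of Dickie and Terwilliger \cite[Theorem~1.1]{DT1996EJC}, quoted here, so within the paper we take it as given. If I had to reprove it, I would work on the primary module of the Terwilliger algebra, as follows.

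First, I would use the $Q$-bipartite hypothesis $a_i^*=0$ for all $i$. By the quoted \cite[Theorem~8.2.4]{BCN1989B}, this makes $\Gamma$ an antipodal double-cover, so $b_i=c_{d-i}$, and for each vertex $x$ there is a unique vertex $x'$ at distance $d$. Next, I would apply \eqref{Terwilliger and Vidunas formula} with all $a_i^*=0$. This gives $\gamma^*=\omega=\eta=0$, so the dual eigenvalues satisfy $\theta^*_{i-1}+\theta^*_{i+1}=\beta\theta^*_i$ and $\theta^*_{d-i}=-\theta^*_i$. Writing $\beta=q+q^{-1}$, this forces $\theta_i^*=m\,\frac{q^{i}+q^{d-i}}{1+q^{d}}$ up to normalisation, or the classical limit $\beta=2$, in which case $\theta_i^*$ is linear in $i$. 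Then, using \eqref{Askey-Wilson} and the Leonard-system parametrisation (Leonard's theorem \cite{Leonard1982SIAM}), I would write all of $\theta_i$, $b_i$, $c_i$ as functions of $q$, $d$ and at most two free parameters. The $Q$-bipartite condition forces the associated Leonard system to be of dual-bipartite type.

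Second, I would use nonbipartiteness, i.e.\ some $a_i\ne 0$. Combined with the antipodal-double-cover symmetry $a_i=a_{d-i}$, this should pin down the remaining free parameter except in small diameter. For $d\geqslant 5$ I expect only $q=\pm1$ type solutions to survive, and these yield the halved $2d$-cube and $J(2d,d)$. Recognising these graphs would follow from known characterisations by their intersection arrays.

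Third, the cases $d=3$ and $d=4$ need separate treatment. For $d=3$, every antipodal double-cover of a complete graph with the right parameters is a Taylor graph, and Taylor graphs are automatically $Q$-polynomial (and $Q$-bipartite for the appropriate idempotent). For $d=4$, the parametrisation leaves a two-parameter family, which I would write as $(\beta,\zeta)$. Integrality of the $c_i$ and of the multiplicities, computed via the standard multiplicity formula, gives the stated divisibility condition $\zeta\mid\beta^2(\beta^2-1)/2$. Nonnegativity of the $a_i$ and the Krein conditions give $\zeta\geqslant 3\beta/4$.

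The main obstacle is the second step: showing that for $d\geqslant 5$ the $q$-parameter must be trivial. I would try the feasibility conditions first, namely positivity of the $b_i,c_i$, integrality of the multiplicities, and the inequality $c_2\leqslant c_3$, applied to the explicit $q$-formulas; these should exclude genuine $q\ne\pm1$ solutions.
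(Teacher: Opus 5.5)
Citing Dickie and Terwilliger's Theorem~1.1 is exactly what the paper does; it gives no proof of this statement. So your first sentence is all the justification needed here.

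The reproof you sketch, however, has a genuine gap at precisely the step you flag. Excluding $q\ne\pm1$ for $d\geqslant 5$ is only expected, and the conditions you propose cannot do it: positivity of the $b_i,c_i$, integrality of the multiplicities, and $c_2\leqslant c_3$.

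Already in your own two-parameter family, with $d=5$ and $\beta=3$, one obtains the array $\{286,245,112,14,1;1,14,112,245,286\}$. Its data are:
\begin{itemize}
\item eigenvalues $286,104,34,6,-8,-22$, satisfying the $\beta$-recurrence with $\beta=3$;
\item multiplicities $1,66,1001,3861,4290,1365$;
\item dual eigenvalues $66,24,6,-6,-24,-66$ for $\theta_1=104$.
\end{itemize}
It is formally $Q$-bipartite with respect to this idempotent, all its Krein parameters are nonnegative, and it is not on the list.

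It is killed by a different kind of argument. Since $m=66<k=286$, project the vectors $\bm{\xi}_y$ $(y\in\Gamma(x))$ onto $\bm{\xi}_x^{\perp}$ and normalize. The resulting Gram matrix is $\frac{1}{105}(110I+33A_{\mathrm{loc}}-5J)$, where $A_{\mathrm{loc}}$ is the adjacency matrix of the local graph $\Gamma(x)$. It has rank at most $65$, which would force the local graph to have the non-integral eigenvalue $-10/3$ with multiplicity at least $220$, which is impossible. In your $d=5$ parametrization one gets $b_1/(\theta_1+1)=(\beta^2-2)/\beta$ in general, versus $\beta-1$ for $d=4$. This is why the local test removes integral $\beta\geqslant 3$ at $d=5$ but not at $d=4$.

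So some ingredient beyond the conditions you list is indispensable. It would still have to be turned into an argument valid for all $d\geqslant 5$, including non-integral $\beta$ and the case $m\geqslant k$. Likewise, the classical branch $\beta=2$ is itself a one-parameter family of arrays. It must be cut down to $J(2d,d)$ and the halved $2d$-cube before any characterization by intersection array can be invoked.

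There are also two smaller slips.

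First, your closed form for $\theta_i^*$ has the wrong sign. The expression $m(q^i+q^{d-i})/(1+q^d)$ is invariant under $i\mapsto d-i$, contradicting $\theta^*_{d-i}=-\theta^*_i$. It should be $m(q^i-q^{d-i})/(1-q^d)$.

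Second, for $d=4$ the bound $\zeta\geqslant 3\beta/4$ does not follow from $a_i\geqslant 0$ together with the Krein conditions. The formulas $a_1=(\beta+1)(2\zeta-\beta)$ and $a_2=\beta^2(2\zeta-\beta)$ give only $\zeta\geqslant\beta/2$. The case $(\beta,\zeta)=(3,2)$, i.e.\ $\{21,16,6,1;1,6,16,21\}$, shows the gap:
\begin{itemize}
\item it has integral multiplicities $1,15,42,35,7$;
\item it satisfies $\zeta\mid\beta^2(\beta^2-1)/2$;
\item it has nonnegative Krein parameters, with $q_{4,4}^4=0$.
\end{itemize}
It is excluded only by the absolute bound, since $1+m_2=43>m_4(m_4+1)/2=28$.
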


\noindent
See also \cite[Section~5.5.8]{DKT2016EJC}.
We have $b_i=c_{d-i}$ $(0\leqslant i\leqslant d)$ for these graphs; cf.~\cite[Proposition~4.2.2]{BCN1989B}.
We also note that the integer $\beta$ in \eqref{4th} above corresponds to the one in \eqref{beta-recurrence} and \eqref{beta-recurrence*}.
Concerning the almost $Q$-bipartite $Q$-polynomial distance-regular graphs, we have the following result due to Dickie \cite{Dickie1995D}:

\begin{theorem}[{\cite[Theorem~3.1.4]{Dickie1995D}}]
Assume that $E$ is almost $Q$-bipartite and that $d\geqslant 4$.
Then, $\Gamma$ is one of the following:
\begin{enumerate}
\addtocounter{enumi}{4}
\item\label{5th} the halved $(2d+1)$-cube;
\item\label{6th} the folded $(2d+1)$-cube;
\item\label{7th} a Hermitian dual polar graph $[^2\!A_{2d-1}(r)]$, where $r$ is a prime power.
\end{enumerate}
\end{theorem}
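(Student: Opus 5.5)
The plan is to work entirely at the level of the parameters of the Leonard system on the primary module, pin down the intersection array, and then appeal to characterizations of the resulting graphs by their intersection arrays.

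\textbf{Step 1: restrict the eigenvalues and dual eigenvalues.} First I would apply the Terwilliger--Vidunas formula \eqref{Terwilliger and Vidunas formula} as in the argument just given.
\begin{itemize}
\item Since $a_0^*=\cdots=a_{d-1}^*=0$ and $d\geqslant 4$, the quadratic $\gamma^*\theta^2+\omega\theta+\eta$ vanishes at the at least four distinct values $\theta_0,\dots,\theta_{d-1}$. Hence $\gamma^*=\omega=\eta=0$.
\item The case $i=d$ then reads $a_d^*(\theta_d-\theta_{d-1})(\theta_d-\theta_{d+1})=0$. Because $a_d^*>0$, this forces $\theta_{d+1}=\theta_d$.
\item Since $\gamma^*=0$, the dual eigenvalues satisfy $\theta^*_{i-1}+\theta^*_{i+1}=\beta\theta^*_i$, a homogeneous recurrence.
\end{itemize}

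\textbf{Step 2: solve for the intersection array.} Next I would write the Leonard system in its standard parametric form (the Bannai--Ito / Terwilliger classification). In the $q$-case this is
\begin{equation*}
\theta_i=\theta_0+h(1-q^i)(1-sq^{i+1})q^{-i}, \qquad \theta^*_i=\theta^*_0+h^*(1-q^i)(1-s^*q^{i+1})q^{-i},
\end{equation*}
with the usual degenerate forms when $q=\pm1$.
\begin{itemize}
\item The homogeneous recurrence for the $\theta^*_i$ fixes $s^*$ in terms of $q$, in effect $\theta^*_i$ becomes a combination of $q^i$ and $q^{-i}$ with no constant term.
\item The condition $\theta_{d+1}=\theta_d$ fixes $s$ in terms of $q$ and $d$.
\item I would then substitute into Terwilliger's formulas for $b_i$ and $c_i$ in terms of the parameter array. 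Imposing $a_i^*=0$ for $i<d$, which in the standard expression for $a_i^*$ amounts to one further relation, should leave a one-parameter family in $q$, plus the $q=1$ degenerations and possibly a sporadic Bannai--Ito-type branch.
\item Using $c_1=1$, integrality of the $b_i,c_i$, and positivity of multiplicities and Krein parameters, I expect three outcomes: $q=1$ giving the arrays of the halved $(2d+1)$-cube and the folded $(2d+1)$-cube, and $q=-r$ giving the array of $[^2\!A_{2d-1}(r)]$ with $r$ a prime power.
\end{itemize}

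\textbf{Step 3: recognize the graphs.} Finally I would invoke characterization results. The halved and folded $(2d+1)$-cubes are known to be determined by their intersection arrays. For the Hermitian dual polar graphs with $d\geqslant 4$, I would use the characterization of dual polar graphs by intersection array, via the geometric (regular near polygon) approach of \cite{BCN1989B}. Along the way this step should also force $r$ to be a prime power.

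\textbf{Main obstacle.} I expect Step~2 to be the hardest: the case analysis over the types of the parameter array, including $q$ a root of unity, $q=-1$ (Bannai--Ito type) and $q=1$, together with ruling out spurious real or non-integral solutions by feasibility conditions. Within that, eliminating the Bannai--Ito branch looks most delicate. There I would first try to show directly that $\theta_{d+1}=\theta_d$ is incompatible with $a_1^*=0$ and $c_1=1$.
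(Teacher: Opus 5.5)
Your Step~1 is correct. It needs only the three distinct values $\theta_0,\theta_1,\theta_2$, so it already works for $d=3$. You are also right that $\theta_{d+1}=\theta_d$ forces $s=q^{-2d-2}$.

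The gap is in Step~2, where you claim the remaining conditions leave a one-parameter family in $q$. They do not.
\begin{itemize}
\item Writing $\theta^*_i=\big(\theta^*_0-h^*(1+s^*q)\big)+h^*q^{-i}+h^*s^*q^{i+1}$, the condition $\gamma^*=0$ (for $q\ne 1$) only removes the constant term. That gives $m=\theta^*_0=h^*(1+s^*q)$; it does not determine $s^*$.
\item You are also counting three independent constraints where there are only two. By \eqref{Terwilliger and Vidunas formula}, $a_1^*=\cdots=a_{d-1}^*=0$ is equivalent to $\gamma^*=\omega=\eta=0$. Beyond the automatic $a_0^*=0$, that is only two conditions. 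Moreover, $\theta_{d+1}=\theta_d$ is a consequence of these together with $a_d^*>0$, as you derived yourself, not an extra constraint.
\item In the $q$-case these conditions amount to $s=q^{-2d-2}$, $r_1=-s^*$, $r_2=-q^{-d-1}$, with $q$ and $s^*$ both free. This is Dickie's Lemma~3.2.1, which the paper quotes for $d=3$.
\end{itemize}
The Hermitian dual polar graphs are only the slice $s^*=0$, $q=-r$ of this two-parameter family. The real content of the theorem is excluding $s^*\ne 0$. Your plan has no mechanism for that. Integrality of the $b_i,c_i$ and nonnegativity of multiplicities and Krein parameters can at best cut the real $(q,s^*)$-region down to isolated points. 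You give no argument that only $s^*=0$ survives.

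The paper does not reprove this theorem; it cites Dickie. The parts of Dickie's argument it reuses for $d=3$ show what is actually required:
\begin{itemize}
\item structural lemmas: $a_1\ne 0$, $p^3_{2,3}\ne 0$, and, when $s^*\ne 0$, $q,s^*\in\mathbb{R}$ with $s^*<0$ and $q<-1$;
\item combinatorial counts such as $L_i$, the number of common neighbours of $y$ and $z$ at distance $i-1$ from $x$ for suitable triples $x,y,z$. These are expressed as rational functions of $q$ and $s^*$ and must be positive integers.
\end{itemize}
A sanity check shows that something of this kind is indispensable. Nothing in your Steps~1--2 uses $d\geqslant 4$, so if they worked they would also classify the almost $Q$-bipartite schemes with $d=3$. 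That case is not covered by Dickie's theorem. The paper can treat it only under the extra relation $c_2^*=2m/(m+2)$, which cuts the family down to a curve. Even then it needs integrality of $L_2$ and $L_3$ together with Sturm-sequence computations to rule out $s^*\ne 0$.

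Step~3 is fine once the intersection array has been pinned down.
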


We now discuss the above seven families of distance-regular graphs \eqref{1st}--\eqref{7th} separately and then handle the remaining case where $E$ is almost $Q$-bipartite and $d=3$.
Recall the fixed $Q$-polynomial ordering $E_0,E=E_1,E_2,\dots,E_d$.
For convenience, we also write the eigenvalues in decreasing order as follows (cf.~\eqref{eigenvalues}):
\begin{equation}\label{natural ordering}
    \theta_0>\theta_{i_1}>\theta_{i_2}>\cdots>\theta_{i_d}.
\end{equation}

\subsection*{The halved $2d$-cube \eqref{1st}}

The halved $2d$-cube has eigenvalues (cf.~\eqref{natural ordering})
\begin{equation*}
    \theta_{i_j}=\frac{1}{2}\big((2d-2j)^2-2d\big) \qquad (0\leqslant j\leqslant d),
\end{equation*}
and we have $E=E_{i_1}$; cf.~\cite[Section~9.2D]{BCN1989B}.
The Krein array agrees with the intersection array of the folded $2d$-cube.
We have $m=2d$ and $c_2^*=2$, so \eqref{5-design parameters} is not satisfied, and hence this graph does not occur as $\Gamma$.
We may also invoke Proposition~\ref{dual width one} to obtain the conclusion.
The vertex set $X$ consists of the even-weight vectors in the $2d$-dimensional vector space $\mathbb{F}_2^{2d}$ over the finite field $\mathbb{F}_2$.
For every $i\in\{1,2,\dots,2d\}$ and every $a\in\mathbb{F}_2$, the set
\begin{equation}\label{descendent of H(2d,2)/2}
    \{x=(x_1,x_2,\dots,x_{2d})\in X:x_i=a\}
\end{equation}
is a descendent with dual width one; cf.~\cite[Section~8]{Tanaka2011EJC}.

\subsection*{The halved $(2d+1)$-cube \eqref{5th} and the folded $(2d+1)$-cube \eqref{6th}}

The halved (resp.~folded) $(2d+1)$-cube is isomorphic to the distance-$2$ (resp.~distance-$d$) graph of the folded (resp.~halved) $(2d+1)$-cube, so that they induce isomorphic association schemes; cf.~\cite[Section~4.2D]{BCN1989B}.
The halved $(2d+1)$-cube has eigenvalues (cf.~\eqref{natural ordering})
\begin{equation*}
    \theta_{i_j}=\frac{1}{2}\big((2d+1-2j)^2-2d-1\big) \qquad (0\leqslant j\leqslant d),
\end{equation*}
and we have either $E=E_{i_1}$ or $E=E_{i_2}$; cf.~\cite[Section~9.2D]{BCN1989B}.
When $E=E_{i_1}$, the Krein array agrees with the intersection array of the folded $(2d+1)$-cube and thus $E$ is almost $Q$-bipartite, and when $E=E_{i_2}$, the Krein array agrees with the intersection array of the halved $(2d+1)$-cube and thus $a_1^*>0$.
Assume now that $E=E_{i_1}$.
Then, we have $m=2d+1$ and $c_2^*=2$, so \eqref{5-design parameters} is not satisfied, and hence this graph does not occur as $\Gamma$.
We note that the subset of $X$ defined similarly to \eqref{descendent of H(2d,2)/2} by replacing $2d$ by $2d+1$ is not a descendent but still has dual width one.

\subsection*{The Johnson graph $J(2d,d)$ \eqref{2nd}}

The Johnson graph $J(2d,d)$ has eigenvalues (cf.~\eqref{natural ordering})
\begin{equation*}
    \theta_{i_j}=(d-j)^2-j \qquad (0\leqslant j\leqslant d),
\end{equation*}
and we have $E=E_{i_1}$; cf.~\cite[Section~9.1]{BCN1989B}.
The vertex set $X$ consists of the $d$-subsets of a $2d$-set $\Phi$, and for every $p\in\Phi$, the set $\{x\in X:p\in x\}$ is a descendent with dual width one; cf.~\cite[Theorem~8]{BGKM2003JCTA}.
It follows from Proposition~\ref{dual width one} that this graph does not occur as $\Gamma$.
It is also possible to describe $m$ and $c_2^*$, but we omit the details.

\subsection*{An antipodal double-cover with $d=4$ \eqref{4th}}

This distance-regular graph has eigenvalues (cf.~\eqref{natural ordering})
\begin{equation*}
    (\theta_0,\theta_{i_1},\dots,\theta_{i_4})=\big(\beta^2(2\zeta-\beta)+2\beta \zeta,\beta(2\zeta-\beta)+2\zeta,2\zeta-\beta,-\beta,-\beta^2\big),
\end{equation*}
and $E=E_{i_1}$ is $Q$-bipartite.
It follows that
\begin{align*}
    m &= \frac{\beta\big(\beta-\beta^3+2(\beta^2+\beta-1)\zeta\big)}{2\zeta}, \\
    c_2^* &= \frac{(\beta^2-\beta+2\zeta)\big(\beta-\beta^3+2(\beta^2+\beta-1)\zeta\big)}{2\zeta\big(\beta-\beta^2+2(\beta+1)\zeta\big)}.
\end{align*}
These values (including the eigenvalues) can be computed manually as follows.\footnote{We initially derived these expressions directly from the intersection array using Wolfram Mathematica (\url{https://www.wolfram.com/mathematica}).}
We are in (c) in \cite[Theorem~3\,(v)]{Terwilliger1988GC}.
The intersection array and the Krein array belong to Case (I) described in \cite[p.~370]{Terwilliger1992JAC} (see also \cite[pp.~264--265]{BI1984B}).
Let $q\in\mathbb{R}$ be such that $\beta=q+q^{-1}$, where we recall that $\beta\geqslant 3$.
Then, as discussed in \cite[p.~92]{Terwilliger1988GC}, the parameters $s,s^*,r_1$, and $r_2$ appearing in the expressions in Case (I) satisfy
\begin{equation*}
    r_2=-r_1,\qquad s=r_1^2, \qquad s^*=-q^{-d-1}.
\end{equation*}
By setting $d=4$ and $\zeta=c_2/\beta$, and using $c_1=c_1^*=1$, we routinely recover the intersection array and obtain the above expressions for $m=b_0^*$, $c_2^*$, and the $\theta_i$.
See \cite{DT1996EJC} for the combinatorial meaning of the positive integer $\zeta$ (denoted `$\eta$' there).

Assume now that $c_2^*=2m/(m+2)$, i.e., \eqref{5-design parameters} holds.
Then, by solving this equation for $\zeta$, we obtain $\zeta\in\{\zeta_1,\zeta_2\}$, where
\begin{equation*}
    \zeta_1=-\frac{\beta^2(\beta-1)}{2(\beta-2)}, \qquad \zeta_2=\frac{\beta \left(\beta^2-1\right)}{2 \left(\beta^2+\beta-1\right)}. 
\end{equation*}
Since $\zeta>0$ and $\zeta_1<0$, we have $\zeta\ne \zeta_1$.
On the other hand, since $\beta$, $\beta^2-1$, and $\beta^2+\beta-1$ are mutually coprime, it follows that $\zeta_2$ is not an integer and hence $\zeta\ne \zeta_2$.
We also see that $\zeta_2<3\beta/4$.
It follows that \eqref{5-design parameters} is never satisfied and hence this graph does not occur as $\Gamma$.

For completeness, we show that this graph has no other $Q$-polynomial idempotent.
For $E=E_{i_1}$, the $Q$-polynomial ordering of the $E_i$ coincides with the natural ordering, i.e., $i_j=j$ $(0\leqslant j\leqslant 4)$.
By a result of Suzuki \cite[Theorem~1]{Suzuki1998JACb}, if there is another $Q$-polynomial ordering $E_0,E_{\nu_1},\dots,E_{\nu_4}$, then $(\nu_1,\nu_2,\nu_3,\nu_4)$ is one of the following:
\begin{equation*}
    (2,4,3,1), \quad (4,1,3,2), \quad (4,2,3,1), \quad (3,2,1,4).
\end{equation*}
Since $E=E_1$ is $Q$-bipartite, it follows from \eqref{3-term recurrence*} that $E_0,E_2$, and $E_4$ span a proper subalgebra of $\bm{A}$ with respect to $\circ$.
Since $E_{\nu_1}$ generates $\bm{A}$ with respect to $\circ$, it follows that $\nu_1\not\in\{2,4\}$, and hence we have $(\nu_1,\nu_2,\nu_3,\nu_4)=(3,2,1,4)$.
However, using $\beta\geqslant 3$ and  $\zeta\geqslant 3\beta/4$, we can easily verify that
\begin{equation*}
    \frac{\theta_0-\theta_{\nu_3}}{\theta_{\nu_1}-\theta_{\nu_2}} < \frac{\theta_{\nu_1}-\theta_{\nu_4}}{\theta_{\nu_2}-\theta_{\nu_3}},
\end{equation*}
from which it follows that the ordering $\theta_0,\theta_{\nu_1},\dots,\theta_{\nu_4}$ of the eigenvalues does not satisfy the recurrence \eqref{beta-recurrence} (for any choice of the value `$\beta$'); cf.~\cite[Lemma~8.3]{Terwilliger2001LAA}.
It follows that $E=E_{i_1}$ is a unique $Q$-polynomial idempotent.

\subsection*{A Hermitian dual polar graph $[^2\!A_{2d-1}(r)]$ \eqref{7th}}

Let $r$ be a prime power.
The Hermitian dual polar graph $[^2\!A_{2d-1}(r)]$ has eigenvalues (cf.~\eqref{natural ordering})
\begin{equation*}
    \theta_{i_j}=r\frac{r^{2(d-j)}-1}{r^2-1}-\frac{r^{2j}-1}{r^2-1} \qquad (0\leqslant j\leqslant d),
\end{equation*}
and we have either $E=E_{i_1}$ or $E=E_{i_d}$; cf.~\cite[Section~9.4]{BCN1989B}.
The vertex set $X$ consists of the maximal (i.e., $d$-dimensional) isotropic subspaces of the $2d$-dimensional vector spase $\mathbb{F}_{r^2}^{2d}$ over the finite field $\mathbb{F}_{r^2}$ with respect to a fixed nondegenerate Hermitian form.
Let $v$ be any one-dimensional isotropic subspace.
Then, the set $\{x\in X:v\subset x\}$ is a descendent with dual width one when $E=E_{i_1}$, and Proposition~\ref{dual width one} applies; cf.~\cite[Theorem~1]{Tanaka2006JCTA}.
Hence, we assume that $E=E_{i_d}$, which is almost $Q$-bipartite.
This case was worked out in detail by Munemasa \cite[p.~266]{Munemasa2004EJC}.
We have
\begin{equation*}
    m=r\frac{r^{2d-1}+1}{r+1}, \qquad c_2^*=\frac{(r-1)(r^{2d-1}+1)}{r(r^{2d-3}+1)},
\end{equation*}
and it is easy to see that \eqref{5-design parameters} is satisfied precisely when $r=2$.

\subsection*{A nonbipartite Taylor graph \eqref{3rd}}

Assume that $\Gamma$ is a Taylor graph, which is an antipodal double-cover with $d=3$.
The intersection array of $\Gamma$ is of the form $\{k,\mu,1;1,\mu,k\}$.
We also assume that $\mu<k-1$, i.e., $\Gamma$ is not bipartite.
We have (cf.~\eqref{natural ordering}) $\theta_0=k$, $\theta_{i_2}=-1$, and $\theta_{i_1}(>1)$ and $\theta_{i_3}(<-1)$ are the solutions of the quadratic equation
\begin{equation*}
    \theta^2-(k-2\mu-1)\theta-k=0.
\end{equation*}
See \cite[Proposition~8.2.3, p.~431]{BCN1989B}.
The graph $\Gamma$ has two $Q$-polynomial idempotents $E_{i_1}$ and $E_{i_3}$, both of which are $Q$-bipartite.

Using \eqref{3-term recurrence} and $A_3^2=I$, we easily see that the distance-$2$ graph $\Gamma_2=(X,R_2)$ is again a Taylor graph with intersection array $\{k,k-\mu-1,1;1,k-\mu-1,k\}$, and that $A_2E_{i_j}=-\theta_{i_j}E_{i_j}$ for $j\in\{1,3\}$.
Since $k>-\theta_{i_3}>-1>-\theta_{i_1}$, replacing $\Gamma$ by $\Gamma_2$ interchanges the roles of $E_{i_1}$ and $E_{i_3}$.
Hence, we may assume without loss of generality that $E=E_{i_1}$.
Then, the $Q$-polynomial ordering of the $E_i$ coincides with the natural ordering, i.e., $i_j=j$ $(0\leqslant j\leqslant 3)$, and we have
\begin{equation*}
    m_0=1, \qquad m=m_1=\frac{\theta_3(k+1)}{\theta_3-\theta_1}, \qquad m_2=k, \qquad m_3=\frac{\theta_1(k+1)}{\theta_1-\theta_3}.
\end{equation*}

Assume now that $c_2^*=2m/(m+2)$, i.e., \eqref{5-design parameters} holds.
By setting $i=1$ in \eqref{3-term recurrence*} and computing the trace of both sides using $b_0^*=m$ and $a_1^*=0$, we find
\begin{equation*}
    k=m_2=\frac{m(m-1)}{c_2^*}=\frac{(m-1)(m+2)}{2}.
\end{equation*}
Then, using the above expressions for $m$ and $k$, and also $k-2\mu-1=\theta_1+\theta_3$ and $\theta_1\theta_3=-k$, we routinely obtain
\begin{gather*}
    \theta_1 = \frac{(m-1)\sqrt{m+2}}{2}, \qquad \theta_3=-\sqrt{m+2}, \\
    \mu = \frac{m^2+m-4-(m-3)\sqrt{m+2}}{4}.
\end{gather*}
As mentioned in Introduction, $\tilde{X}$ is a tight spherical $5$-design (cf.~\eqref{Fisher bound}) in this case, because
\begin{equation*}
    |X|=2(k+1)=m(m+1)=2\binom{m+1}{2}.
\end{equation*}

We end the discussion on this family with some comments, which we will use in Section~\ref{sec: concluding remarks}.

\begin{note}\label{icosahedron is special}
Here, we claim that, given an association scheme as in Theorem~\ref{main theorem}\,\eqref{Taylor}, 
the two identities involving $k,\mu$, and $m$ uniquely determine the nonbipartite Taylor graph up to isomorphism and the tight spherical $5$-design up to orthogonal transformation.
Indeed, assume first that the distance-$2$ graph $\Gamma_2$ has the same intersection array as that of $\Gamma$, i.e., $\mu=k-\mu-1$.
This is equivalent to $\theta_1=-\theta_3$, which holds if and only if $m=3$, and so $k=5$ and $\mu=2$.
It is easy to show that $\Gamma$ and $\Gamma_2$ are both isomorphic to (the $1$-skeleton of) the regular icosahedron.
Next, assume that $m=m_3$.
Then, we again have $\theta_1=-\theta_3$, and thus we are in the same situation as above.
In particular, the spherical embeddings with respect to $E_{i_1}$ and $E_{i_3}$ both consist of the vertices of the regular icosahedron up to orthogonal transformation.
\end{note}

\subsection*{The remaining case with $d=3$}

From now on, we assume that $E$ is almost $Q$-bipartite and that $d=3$.
Moreover, we also assume that $\Gamma$ is neither the halved $7$-cube \eqref{5th} nor the folded $7$-cube \eqref{6th}, which we have already ruled out.
Then, by \cite[Lemma~3.2.1]{Dickie1995D}, there exist scalars $q,s^*\in\mathbb{C}$ with
\begin{gather*}
    |q|\geqslant 1, \\
    q^i\ne 1 \ \  (1\leqslant i\leqslant 6), \quad s^*q^i\ne 1 \ \ (2\leqslant i\leqslant 6), \quad s^*q^i\ne -1 \ \ (1\leqslant i\leqslant 7),
\end{gather*}
such that
\begin{equation*}
    m = -\frac{(q^6-q)(s^*q+1)}{(q-1)(s^*q^7+1)}, \qquad c_2^* = \frac{(q^6-q)(q^2-1)(s^*q^6+1)}{q^2(q-1)(q^3-1)(s^*q^7+1)}.
\end{equation*}
We note that we are in (d) in \cite[Theorem~3\,(v)]{Terwilliger1988GC}.
The scalar $q$ satisfies $q+q^{-1}=\beta$, where $\beta$ is from \eqref{beta-recurrence} and \eqref{beta-recurrence*}.
The intersection array and the Krein array belong to Case (I) described in \cite[p.~370]{Terwilliger1992JAC}, and, as discussed in \cite[p.~93]{Terwilliger1988GC}, the parameters $s,r_1$, and $r_2$ appearing in the expressions in Case (I) satisfy
\begin{equation*}
    s=q^{-2d-2}=q^{-8}, \qquad r_1=-s^*, \qquad r_2=-q^{-d-1}=-q^{-4}.
\end{equation*}

Assume now that $c_2^*=2m/(m+2)$, i.e., \eqref{5-design parameters} holds.
Using $q,q^5\ne 1$, we can rewrite this equation as follows:
\begin{equation}\label{quadratic equation}
    q^8(2q+1)s^{\ast 2}-q^2(q^5-1)s^*-q-2=0.
\end{equation}
Our aim is to show that
\begin{equation*}
    s^*=0, \qquad q=-2.
\end{equation*}
Then, we will see that the intersection array coincides with that of the Hermitian dual polar graph $[^2\!A_5(2)]$.
By \cite[Theorem~9.4.7]{BCN1989B}, we will then conclude that $\Gamma$ is isomorphic to $[^2\!A_5(2)]$.

First, we claim that $a_1\ne 0$ and $p_{2,3}^3\ne 0$, conditions assumed in many of the lemmas in \cite[Chapter~3]{Dickie1995D} that we will use.
Let us assume that $a_1=0$.
Then, by \cite[Lemma~3.3.1]{Dickie1995D}, we have $s^*=q^{-8}$, and it follows from \eqref{quadratic equation} that $(q+1)^2(q^7-1)=0$, so that $q^7=1$ because $q\ne -1$.
However, this would give us that $m=2$, which we have excluded in Assumption~\ref{assumption}.
We have now shown that $a_1\ne 0$.
Likewise, if $p_{2,3}^3=0$, then by \cite[Lemma~3.5.1]{Dickie1995D}, we have $s^{*2}=q^{-9}$, and it follows from \eqref{quadratic equation} and $q^3\ne 1$ that $q^7=1$ and $s^*=q^{-1}$, again implying that $m=2$, a contradiction.
The claim is proved.

By solving \eqref{quadratic equation} for $s^*$, we have
\begin{equation*}
    s^*=\frac{q^5-1\pm\sqrt{(q^5-1)^2+4q^4(2q+1)(q+2)}}{2q^6(2q+1)},
\end{equation*}
where we note that the denominator is nonzero because $|q|\geqslant 1$.
For the moment, we assume that \begin{equation*}
    s^*\ne 0.    
\end{equation*}
By \cite[Lemmas~3.3.5, 3.3.6, 3.5.2, 3.5.3]{Dickie1995D}, we have $q,s^*\in\mathbb{R}$ with $s^*<0$ and $q<-1$, from which it follows that
\begin{equation}\label{s*}
    s^*=\frac{q^5-1+\sqrt{(q^5-1)^2+4q^4(2q+1)(q+2)}}{2q^6(2q+1)}, \qquad q<-2,
\end{equation}
because $q^5-1<0$ and $2q+1<0$.
Observe that (cf.~\cite[Lemma~3.5.4]{Dickie1995D})
\begin{equation}\label{s^*q^6}
    s^*q^6=\frac{-2q^4(q+2)}{q^5-1-\sqrt{(q^5-1)^2+4q^4(2q+1)(q+2)}}\rightarrow -1 \qquad (q\rightarrow -\infty).
\end{equation}

We follow the notation of Dickie \cite[Lemma~3.3.3]{Dickie1995D}.
For $i\in\{2,3\}$, we define the nonnegative integer $L_i\in\mathbb{Z}$ by
\begin{equation*}
    L_i=\big|\big\{w\in X:\partial(x,w)=i-1,\,\partial(y,w)=\partial(z,w)=1\big\}\big|,
\end{equation*}
where $\partial(x,y)=i$, $\partial(x,z)=i-1$, and $\partial(y,z)=2$.
(Here, such a triple $x,y,z$ exists by \cite[Lemma~3.3.2]{Dickie1995D}.)
By \cite[(3.40)]{Dickie1995D},
\begin{align}
    L_2 &= \frac{(s^*q^2-1)(s^*q^3-1)(s^*q^6-1)({s^*}^2q^{12}-1)}{
	(s^*q^4-1)(s^*q^5-1)(s^*q^4-1)({s^*}^2q^{10}-1)}, \label{L2} \\
    L_3 &= \frac{(s^*q^2-1)(s^*q^3-1)(s^*q^8-1)({s^*}^2q^{12}-1)}{
(s^*q^4-1)(s^*q^5-1)(s^*q^6-1)({s^*}^2q^{10}-1)}. \label{L3} \end{align}
By \cite[Lemma~3.3.4]{Dickie1995D}, we have
\begin{equation}\label{L2, L3 positive}
    L_2>0, \qquad L_3>0.
\end{equation}
Let $g_2(q)$ (resp.~$g_3(q)$) denote the RHS in \eqref{L2} (resp.~\eqref{L3}), viewed as a function of $q\in(-\infty,-2]$, where $s^*=s^*(q)$ is as in \eqref{s*}.
We note that
\begin{equation}\label{at q=-2}
    s^*(-2)=0, \qquad g_2(-2)=g_3(-2)=1.
\end{equation}
It follows from \eqref{s^*q^6} that
\begin{equation*}
    g_2(q)\rightarrow 0 \qquad (q\rightarrow -\infty).
\end{equation*}
Hence, $\Gamma$ cannot exist when $q$ is sufficiently small.

Figures \ref{graph of g2} and \ref{graph of g3} show the graphs of the functions $g_2(q)$ and $g_3(q)$, respectively, produced by SageMath.\footnote{\url{https://www.sagemath.org/}}
Figure \ref{graph of g2} suggests that there exists only one value of $q<-2$ such that $g_2(q)$ is an integer: we have $g_2(q)=1$ at $q\approx -6.15$.
However, we then see from Figure \ref{graph of g3} that $9<g_3(q)<10$ for this particular value of $q$, and hence $g_2(q)$ and $g_3(q)$ cannot both be integers for $q<-2$, from which we conclude that we must have $s^*=0$.
Then, it follows that the constant term of \eqref{quadratic equation} must be zero, i.e., $q=-2$.

\begin{figure}
\captionsetup{width=\linewidth}
\begin{minipage}{5.7cm}
\includegraphics[bb=0 0 453 338,width=5.7cm]{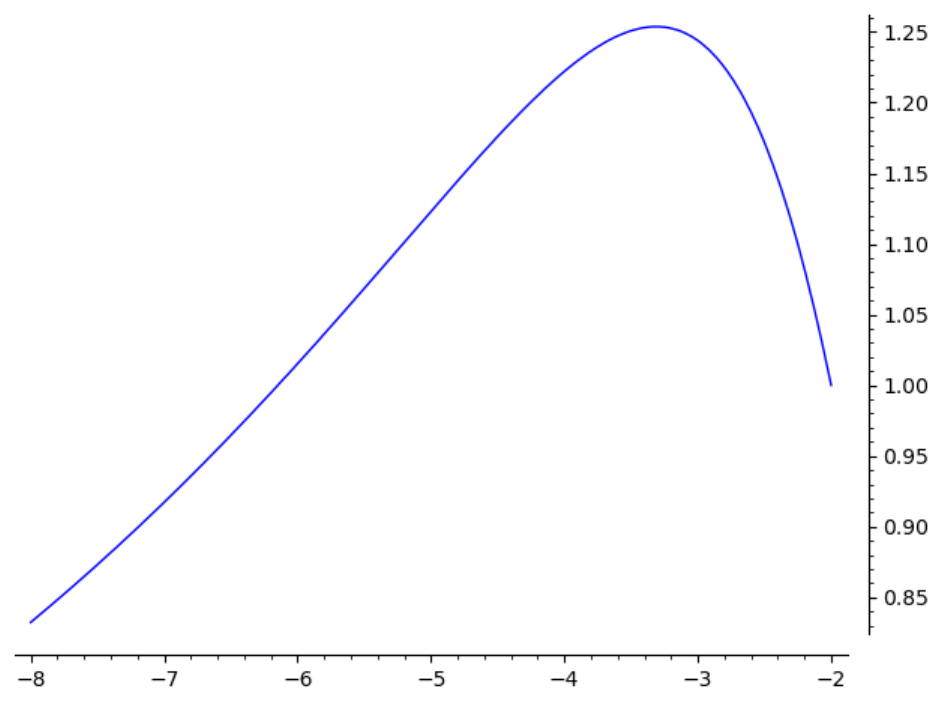}
\caption{Graph of $g_2(q)$}
\label{graph of g2}
\end{minipage} \qquad
\begin{minipage}{5.7cm}
\includegraphics[bb=0 0 453 338,width=5.7cm]{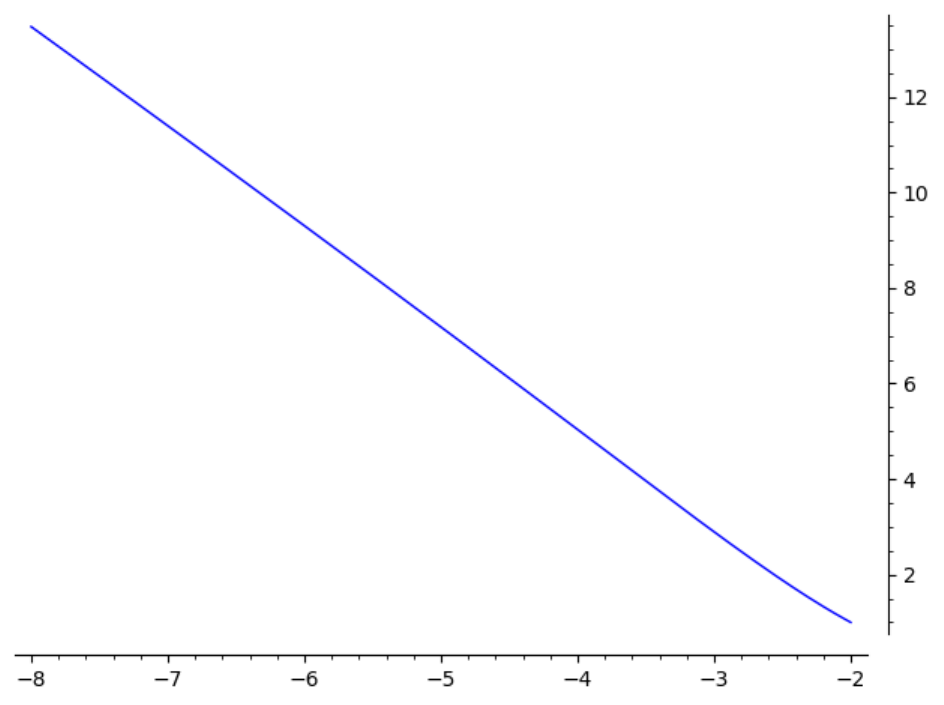}
\caption{Graph of $g_3(q)$}
\label{graph of g3}
\end{minipage}
\end{figure}

We now validate the above observations without using floating-point arithmetic.
It will be convenient to work with the following function instead of $g_3(q)$:
\begin{equation}\label{g32}
    g_{3/2}(q)=\frac{g_3(q)}{g_2(q)}=\frac{(s^*q^4-1)(s^*q^8-1)}{(s^*q^6-1)^2} \qquad (q\in(-\infty,-2]).
\end{equation}

\begin{lemma}\label{continuous}
The functions $g_2(q)$ and $g_{3/2}(q)$ are continuous on $(-\infty,2]$.
\end{lemma}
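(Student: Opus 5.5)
The lemma presumably means $(-\infty,-2]$, the domain on which $s^*(q)$ is defined in \eqref{s*}; I will prove continuity there. The plan has two steps: first show that $s^*(q)$ is continuous and real on $(-\infty,-2]$, then show that no denominator in \eqref{L2} or \eqref{g32} vanishes on that interval. Continuity then follows because compositions and quotients of continuous functions are continuous wherever the denominators are nonzero.

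\textbf{Step 1: $s^*(q)$ is continuous.} Let
\[
\Delta(q)=(q^5-1)^2+4q^4(2q+1)(q+2).
\]
For $q\leqslant -2$ we have $(2q+1)(q+2)\geqslant 0$, so $\Delta(q)\geqslant (q^5-1)^2>0$. Hence $\sqrt{\Delta}$ is continuous and real there. The denominator $2q^6(2q+1)$ is also nonzero for $q\leqslant -2$, so $s^*(q)$ is continuous.

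Next I fix the sign of $s^*$. The numerator $q^5-1+\sqrt{\Delta}$ is at least $q^5-1+|q^5-1|=0$, and the denominator is negative. So $s^*(q)\leqslant 0$, with equality exactly when $\Delta=(q^5-1)^2$, that is, at $q=-2$. Equivalently, the alternative form \eqref{s^*q^6} shows directly that $s^*q^6\leqslant 0$.

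\textbf{Step 2: the denominators do not vanish.} Write $u=s^*q$. Since $s^*\leqslant 0$ and $q<0$:
\begin{itemize}
\item $s^*q^{\text{even}}\leqslant 0$, so factors such as $s^*q^4-1$ and $s^*q^6-1$ are at most $-1$.
\item $s^*q^{\text{odd}}\geqslant 0$, so $s^*q^5-1$ could in principle vanish.
\item $s^{*2}q^{10}-1=(s^*q^5-1)(s^*q^5+1)$, and the factor $s^*q^5+1\geqslant 1$ is harmless.
\end{itemize}
So the only potential zero is $s^*q^5=1$, and the whole proof reduces to showing $0\leqslant s^*q^5<1$ on $(-\infty,-2]$. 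This is the main obstacle.

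\textbf{Step 3: the bound $s^*q^5<1$.} Using \eqref{s^*q^6},
\[
s^*q^5=\frac{-2q^3(q+2)}{q^5-1-\sqrt{\Delta}}.
\]
The denominator is negative, and for $q<-2$ the numerator $-2q^3(q+2)$ is also negative, so $s^*q^5>0$ there; at $q=-2$ it equals $0$. The inequality $s^*q^5<1$ is then equivalent to
\[
-2q^3(q+2)>q^5-1-\sqrt{\Delta},
\]
that is,
\[
\sqrt{\Delta}>q^5-1+2q^4+4q^3.
\]
If the right-hand side is negative this holds trivially. Otherwise, square both sides and compare the polynomials:
\[
(q^5-1)^2+4q^4(2q^2+5q+2) \quad\text{versus}\quad (q^5-1+2q^4+4q^3)^2.
\]
Let $v=q^5-1$ and $w=2q^4+4q^3$. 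The difference is
\[
4q^4(2q^2+5q+2)-2vw-w^2,
\]
and I would check its sign for $q\leqslant -2$ by expanding it as a polynomial in $q$.

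Heuristically, for $q\to-\infty$ we have $s^*q^6\to-1$, so $s^*q^5\approx -1/q\to 0^+$, which is comfortably below $1$. I therefore expect the final polynomial inequality to hold with room to spare; verifying it cleanly is the step that needs the most care.

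Once all denominators are shown to be nonzero, $g_2$ and $g_{3/2}$ are continuous on $(-\infty,-2]$.
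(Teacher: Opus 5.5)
You read the interval correctly as $(-\infty,-2]$, and your reduction is the same as the paper's. Since $s^*\leqslant 0$, the factors $s^*q^4-1$ and $s^*q^6-1$ are at most $-1$. The factor $s^{*2}q^{10}-1$ splits as $(s^*q^5-1)(s^*q^5+1)$. So everything comes down to showing $0\leqslant s^*q^5<1$.

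The gap is that you leave exactly this step, the only nontrivial one, unverified (``I would check its sign\ldots''). As written the proof is not finished. The fix is one line: the right-hand side of your target inequality satisfies
\[
q^5-1+2q^4+4q^3=q^3\bigl((q+1)^2+3\bigr)-1<0 \qquad (q<0).
\]
So the `trivial' branch of your case split is the only branch, and $\sqrt{\Delta}>q^5-1+2q^4+4q^3$ holds on all of $(-\infty,-2]$ without squaring. Incidentally, the squared comparison is not `with room to spare'. Your difference $4q^4(2q^2+5q+2)-2vw-w^2$ vanishes at $q=-2$, because there $w=2q^3(q+2)=0$ and $\Delta=v^2$.

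The paper closes the same step from the other side. In $s^*q^5=2q^3(q+2)/(\sqrt{\Delta}+1-q^5)$ it uses $\sqrt{\Delta}\geqslant 1-q^5$ to get $s^*q^5\leqslant q^3(q+2)/(1-q^5)<-1/q\leqslant 1/2$. This gives an explicit margin rather than just nonvanishing. With either line inserted, your argument is complete.
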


\begin{proof}
Since $q\leqslant -2$ and $s^*=s^*(q)\leqslant 0$, we have $s^*q^4-1<0$ and $s^*q^6-1<0$.
Moreover, we have $s^*q^5\geqslant 0$ and also (cf.~\eqref{s^*q^6})
\begin{equation*}
    s^*q^5 = \frac{2q^3(q+2)}{\sqrt{(q^5-1)^2+4q^4(2q+1)(q+2)}+1-q^5} \leqslant \frac{q^3(q+2)}{1-q^5} <-\frac{1}{q}\leqslant \frac{1}{2}.
\end{equation*}
It follows that the denominators of $g_2(q)$ and $g_{3/2}(q)$ never vanish, and hence the two functions are continuous.
\end{proof}

In the proofs of the following lemmas, we let $\psi(q)$ denote the LHS of \eqref{quadratic equation}.

\begin{lemma}\label{less than 2}
We have $g_2(q)<2$ for all $q\in(-\infty,-2]$.
\end{lemma}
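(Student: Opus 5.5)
We need to show $g_2(q)<2$ on $(-\infty,-2]$, where
\begin{equation*}
g_2(q)=\frac{(s^*q^2-1)(s^*q^3-1)(s^*q^6-1)({s^*}^2q^{12}-1)}{(s^*q^4-1)^2(s^*q^5-1)({s^*}^2q^{10}-1)}
\end{equation*}
and $s^*=s^*(q)$ is the root \eqref{s*} of $\psi(q)=0$. The plan is to introduce the single variable $u=s^*q^5$, which by the proof of Lemma~\ref{continuous} satisfies $0\leqslant u<-1/q\leqslant 1/2$. We then write every factor in terms of $u$ and $q$: $s^*q^2=uq^{-3}$, $s^*q^3=uq^{-2}$, $s^*q^4=uq^{-1}$, $s^*q^6=uq$, ${s^*}^2q^{10}=u^2$ and ${s^*}^2q^{12}=u^2q^2$. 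Since $q<0$ and $u\geqslant 0$, we get the signs $uq^{-3}\leqslant 0$, $uq^{-2}\geqslant 0$, $uq^{-1}\leqslant 0$ and $uq\leqslant 0$. So we can bound each factor's sign and size separately.

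The key steps are as follows.
\begin{enumerate}
\item The denominator is positive. Indeed $(uq^{-1}-1)^2\geqslant 1$, while $1-u>1/2$ and $1-u^2\geqslant 3/4$.
\item For the numerator, the factors $1-uq^{-3}$ and $1-uq$ are at least $1$, while $0<1-uq^{-2}\leqslant 1$. The factor $u^2q^2-1$ has the sign of $|uq|-1$, and by \eqref{s^*q^6} the quantity $uq=s^*q^6$ tends to $-1$. So the signs combine as $g_2=\frac{(1-uq^{-3})(1-uq^{-2})(1-uq)(1-u^2q^2)}{(1-uq^{-1})^2(1-u)(1-u^2)}$, and we need to check whether $|uq|<1$ throughout.
\item To control $uq=s^*q^6$, I would derive from $\psi(q)=0$ a quadratic relation for $v=s^*q^6$. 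This relation is $(2q+1)v^2-(q^5-1)q^{-4}v\cdot q^{0}\cdots$, which rearranges to $q^{-4}(2q+1)v^2-q^{-4}(q^5-1)v-q-2=0$. From it I would show $-1<v\leqslant 0$, i.e.\ $|uq|<1$, by evaluating the quadratic at $v=-1$ and $v=0$ and comparing signs. This gives $0<1-u^2q^2\leqslant 1$.
\item Then $g_2<2$ follows if $(1-uq^{-3})(1-uq)<2(1-uq^{-1})^2(1-u)(1-u^2)$. Writing $w=-uq=|v|\in[0,1)$, the left side is at most $(1+w/8)(1+w)$ roughly. The right side involves $(1+w/|q|^2)^2(1-u)(1-u^2)$, which is at least about $(1-u)(1-u^2)\cdot 2$. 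The cleaner route is to use the exact small factor $1-u^2q^2=(1-w)(1+w)$ to absorb $(1+w)$. This reduces the claim to $(1+w/|q|^4)(1-w)(1+w)^2\cdot(\text{stuff}\leqslant1)<2(1-u)(1-u^2)$. Since $(1-w)(1+w)^2\leqslant 32/27<2\cdot\tfrac12\cdot\tfrac34\cdot(\ldots)$ fails marginally, I would sharpen it by using $u<1/|q|\leqslant 1/2$ together with $w<1$ jointly.
\end{enumerate}

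The main obstacle is step 4: getting an inequality strong enough uniformly in $q$. The crude bounds are nearly tight, since the true maximum of $g_2$ is about $1.x$. If the simple bounds do not close, I would first split into $q\in[-4,-2]$ and $q<-4$. For large $|q|$, $u$ is tiny and $(1-w)(1+w)^2/(1-u)(1-u^2)\leqslant 32/27\cdot(1+O(1/|q|))<2$ trivially. On the compact interval I would use explicit monotonicity of $s^*(q)$ from \eqref{s*}, together with interval bounds on $u$ and $w$, to verify the inequality rigorously by a few subintervals.
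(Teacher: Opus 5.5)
Your steps 1 and 2 are correct. With $u=s^*q^5$ the sign bookkeeping is right, and
\[
g_2=\frac{(1-uq^{-3})(1-uq^{-2})(1-uq)(1-u^2q^2)}{(1-uq^{-1})^2(1-u)(1-u^2)} .
\]
This is a genuinely different route from the paper's: a direct estimate rather than a contradiction argument.

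\textbf{The gap.} Step~4, the only step where the lemma is actually decided, is never proved. The estimate you write down uses $u<1/2$ in the denominator against $(1-w)(1+w)^2\leqslant 32/27$ in the numerator. That only gives $g_2<272/81$, so it fails by a wide margin, not marginally. Your fallback is also not carried out: a split at $q=-4$, an unproved monotonicity of $s^*(q)$, and interval checks on unspecified subintervals.

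\textbf{The missing observation.} $u$ and $w$ are not independent. By definition $w=-uq=|q|\,u$, so $u=w/|q|\leqslant w/2$. Put $p=-q\geqslant 2$. Note that $0\leqslant w<1$ is exactly the bound $0\leqslant s^*q^5<-1/q$ from the proof of Lemma~\ref{continuous}, so your step~3 is redundant. Every factor now becomes explicit:
\[
g_2=\frac{(1+w/p^4)(1-w/p^3)}{(1+w/p^2)^2}\cdot\frac{(1+w)(1-w^2)}{(1-w/p)(1-w^2/p^2)}
\leqslant\frac{(1+w)(1-w^2)}{(1-w/2)^2(1+w/2)}
=\frac{1+w}{1+w/2}\cdot\frac{1-w^2}{(1-w/2)^2}\leqslant\frac43\cdot\frac43=\frac{16}{9}.
\]
This uses four facts:
\begin{enumerate}
\item $(1+w/p^4)(1-w/p^3)\leqslant 1$;
\item $(1+w/p^2)^2\geqslant 1$;
\item $(1-w/p)(1-w^2/p^2)$ increases with $p$;
\item $4(1-w/2)^2-3(1-w^2)=(1-2w)^2\geqslant 0$.
\end{enumerate}
Even keeping your cruder factor $1-uq^{-3}\leqslant 17/16$ gives $g_2\leqslant 17/9<2$. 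No case split or numerics are needed.

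\textbf{Comparison with the paper.} With this fix your argument is a valid proof. The paper argues by contradiction:
\begin{itemize}
\item since $g_2(-2)=1$, continuity and the Intermediate Value Theorem give $\dot q<-2$ with $g_2(\dot q)=2$;
\item eliminating $s^*$ between $\varphi_2$ and $\psi$ by a Gr\"obner basis computation yields $h_2(\dot q)=0$;
\item an explicit regrouping shows $h_2>0$ on $q<-2$, a contradiction.
\end{itemize}
That route needs computer algebra. Its advantage is that it is the same template the paper reuses for the later lemmas ($g_2\neq 1$ off $(\delta_2,\delta_1)$, and $9<g_{3/2}<10$), where the target windows are presumably too narrow for factorwise bounds. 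Your route is entirely by hand, uses only the a priori bound on $s^*q^5$, and gives the explicit constant $16/9$.
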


\begin{proof}
Recall \eqref{at q=-2}.
Suppose, on the contrary, that $g_2(q)\geqslant 2$ for some $q\in(-\infty,-2)$.
Then, by Lemma~\ref{continuous} and the Intermediate Value Theorem, we have $g_2(\dot{q})=2$ for some $\dot{q}\in(-\infty,-2)$.
This implies that $\varphi_2(\dot{q})=0$, where (cf.~\eqref{L2})
\begin{align*}
    \varphi_2(q) &= (s^*q^2-1)(s^*q^3-1)(s^*q^6-1)(s^{*2}q^{12}-1) \\
    &\phantom{{}={}} -2(s^*q^4-1)(s^*q^5-1)(s^*q^4-1)({s^*}^2q^{10}-1).
\end{align*}

For the moment, let us view $q$ and $s^*$ as indeterminates, so $\varphi_2(q)=\varphi_2(q,s^*)$ and $\psi(q)=\psi(q,s^*)$ belong to the polynomial ring $\mathbb{Q}[q,s^*]$.
Let $\tilde{h}_2(q)$ be a generator of the principal ideal $\mathcal{I}\cap\mathbb{Q}[q]$ of $\mathbb{Q}[q]$, where $\mathcal{I}$ denotes the ideal of $\mathbb{Q}[q,s^*]$ generated by $\varphi_2(q,s^*)$ and $\psi(q,s^*)$.
We have $\tilde{h}_2(q)=(q-1)^4(q+1)^4h_2(q)$ up to a nonzero scalar in $\mathbb{Q}$, where
\begin{align*}
    h_2(q) &= 3q^{14}+11q^{13}+21q^{12}-11q^{11}-9q^{10}-86q^9+68q^8-36q^7 \\
    &\phantom{{}={}} +68q^6-86q^5-9q^4-11q^3+21q^2+11q+3.
\end{align*}
This is found by computing the Gr\"{o}bner basis of $\mathcal{I}$ for the lexicographic order with $s^*>q$; see, e.g., \cite[Chapter~3]{CLS2025B}.
We simply call this procedure of obtaining $\tilde{h}_2(q)$ \emph{eliminating} $s^*$ from $\varphi_2(q)$ and $\psi(q)$, which we will use three times below.
A SageMath code is given in Appendix \ref{sec: SageMath codes}.

Observe that $\tilde{h}_2(\dot{q})=0$.
Since $q^2-1>0$ for $q<-2$, it follows that $h_2(\dot{q})=0$.
However, since
\begin{align*}
    h_2(q) &= 3(q+2)^2q^{12}+9(q^2-1)q^{10}+68q^8+59q^6+9q^4(q^2-1)+21q^2+3 \\
    &\phantom{{}={}} -q\left(q^{12}+11q^{10}+86q^4(q^4+1)+36q^6+11(q^2-1)\right)
\\&>0
\end{align*}
for $q<-2$, this is impossible.
It follows that $g_2(q)<2$ for all $q<-2$.
\end{proof}

For the rest of this section, let
\begin{equation*}
    \delta_1=-6.1, \qquad \delta_2=-6.2.
\end{equation*}

\begin{lemma}\label{L2 unequal to 1}
We have $g_2(q)\ne 1$ for all $q\in(-\infty,\delta_2]\cup[\delta_1,-2)$.
\end{lemma}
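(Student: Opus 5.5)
I will mirror the proof of Lemma~\ref{less than 2}: first turn the equation $g_2(q)=1$ into a univariate polynomial condition on $q$, then show that polynomial has no roots in the two given intervals.

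\textbf{Step 1: reduction to a polynomial in $q$.} Suppose $g_2(\dot q)=1$ for some $\dot q$ in $(-\infty,\delta_2]\cup[\delta_1,-2)$. By Lemma~\ref{continuous} the denominator in \eqref{L2} does not vanish. Hence $\varphi_1(\dot q,s^*(\dot q))=0$, where
\begin{align*}
    \varphi_1(q,s^*) &= (s^*q^2-1)(s^*q^3-1)(s^*q^6-1)(s^{*2}q^{12}-1) \\
    &\phantom{{}={}} -(s^*q^4-1)^2(s^*q^5-1)(s^{*2}q^{10}-1).
\end{align*}
Also $\psi(\dot q,s^*(\dot q))=0$ by \eqref{quadratic equation}.

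I will eliminate $s^*$ from $\varphi_1$ and $\psi$, i.e.\ compute the lex Gr\"obner basis of $(\varphi_1,\psi)$ with $s^*>q$, exactly as in Appendix~\ref{sec: SageMath codes}. This yields a generator $\tilde h_1(q)$ of the elimination ideal, which vanishes at $\dot q$.

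Note that $\varphi_1$ has the factor $s^*$. Indeed, at $s^*=0$ both products equal $1$, so $\varphi_1(q,0)=0$. I will therefore first divide $\varphi_1$ by $s^*$. Otherwise the elimination would just produce the factor $q+2$, coming from $\psi(q,0)=-(q+2)$. This is harmless on $[\delta_1,-2)$, but it is cleaner to remove it. On our range we have $s^*\neq0$ for $q<-2$, since $\psi(q,0)\ne0$ there.

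Next I factor $\tilde h_1$ over $\mathbb{Q}$. I will discard factors such as powers of $q\pm1$ and $q$, which have no zeros for $q<-2$. What remains is an irreducible factor $h_1(q)$ which must vanish at $\dot q$.

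\textbf{Step 2: locating the real roots of $h_1$.} Figure~\ref{graph of g2} shows $g_2=1$ at $q\approx-6.15$, so $h_1$ has a root in $(\delta_2,\delta_1)$. The claim is that it has no other root in $(-\infty,-2)$. I will certify this with exact arithmetic. Either I compute the Sturm sequence of $h_1$ and count the sign changes at $-\infty$, $\delta_2$, $\delta_1$ and $-2$, expecting exactly one root, lying in $(\delta_2,\delta_1)$. Or I apply Descartes' rule of signs to $h_1(-2-u)$ and $h_1(\delta_2-u)$ for $u>0$, and check positivity on $[\delta_1,-2]$ by an explicit sum-of-positive-terms rewriting, as was done for $h_2$ in Lemma~\ref{less than 2}.

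A subtlety remains: a root of $h_1$ in the intervals could correspond to the other branch of $s^*$ in \eqref{quadratic equation}, not to the branch \eqref{s*}. Showing that $h_1$ has no root at all there avoids this issue.

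\textbf{Main obstacle.} The main obstacle is Step 2. The eliminant $h_1$ will likely have large degree (around 14 or more, as with $h_2$) and sizeable coefficients, so a hand-made positivity decomposition may be hard to find. I would rely on a Sturm-sequence or isolating-interval computation in SageMath with rational endpoints $-61/10$ and $-31/5$, and report it as a code-backed exact certificate.
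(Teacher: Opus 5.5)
Your proposal is correct and is essentially the paper's proof. Eliminating $s^*$ from $\varphi_1$ and $\psi$ gives $\tilde h_1=(q-1)^4(q+1)^4(q+2)h_1$ with $h_1$ of degree $12$, and the factor $q+2$ is exactly the artifact of $s^*\mid\varphi_1$ that you anticipated. The Sturm sequence of $h_1$, evaluated at $-\infty,\delta_2,\delta_1,-2$, then shows that its only zero in $(-\infty,-2)$ lies in $(\delta_2,\delta_1)$.

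One small slip in your optional Descartes alternative: $h_1$ is negative, not positive, on $[\delta_1,-2]$ (e.g.\ $h_1(-2)=-28565$), so the check there would be of $-h_1>0$.
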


\begin{proof}
Suppose that $g_2(\dot{q})=1$ for some $\dot{q}\in(-\infty,-2)$.
This implies that $\varphi_1(\dot{q})=0$, where (cf.~\eqref{L2})
\begin{align*}
    \varphi_1(q) &= (s^*q^2-1)(s^*q^3-1)(s^*q^6-1)(s^{*2}q^{12}-1) \\
    &\phantom{{}={}} -(s^*q^4-1)(s^*q^5-1)(s^*q^4-1)({s^*}^2q^{10}-1).
\end{align*}
Eliminating $s^*$ from $\varphi_1(q)$ and $\psi(q)$ gives $\tilde{h}_1(\dot{q})=0$, where
$\tilde{h}_1(q)=(q-1)^4(q+1)^4(q+2)h_1(q)$, and
\begin{align*}
    h_1(q) &= q^{12}+5q^{11}-5q^{10}+7q^9-30q^8+23q^7-28q^6+23q^5 \\
    &\phantom{{}={}} -30q^4+7q^3-5q^2+5q+1.
\end{align*}
We note that $h_1(\dot{q})=0$.

We now invoke Sturm's Theorem; see, e.g., \cite[Theorem~10.5.3]{RS2002B}.
Let $p_0(q)=h_1(q)$ and $p_1(q)=p_0'(q)$ (the derivative of $p_0(q)$).
For $2\leqslant i\leqslant 12$, let $p_i(q)\in\mathbb{Q}[q]$ be the remainder after dividing $-p_{i-2}(q)$ by $p_{i-1}(q)$.
For a fixed value of $q$, let $V(q)$ be the number of sign variations (after excluding $0$'s) in the sequence $(p_0(q),p_1(q),\dots,p_{12}(q))$.
We have $p_{12}(q)\approx 16.052\ne 0$, which implies that $h_1(q)$ is a square-free polynomial.
For any finite open interval $(\delta,\epsilon)$ such that $h_1(\delta),h_1(\epsilon)\ne 0$, Sturm's Theorem states that the number of distinct zeros in it equals $V(\delta)-V(\epsilon)$.
For $0\leqslant i\leqslant 12$, let $p_i(-\infty)$ denote the leading coefficient of $(-1)^{\deg p_i(q)}p_i(q)$, and define $V(-\infty)$ in the same manner.
Then, the formula still holds when $\delta=-\infty$, because $V(q)=V(-\infty)$ for $q\ll 0$.
Table \ref{sign changes for h1} gives the values of $p_i(q)$ $(0\leqslant i\leqslant 12)$ and $V(q)$ for $q\in\{-\infty,\delta_2,\delta_1,-2\}$.
It should be remarked that the computations of the values of the $p_i(q)$ in the table were performed all in $\mathbb{Q}$ using SageMath, but we rounded the results for readability.
What we need here is to determine the signs of the $p_i(q)$ correctly to find $V(q)$.
Observe also that $h_1(q)=p_0(q)\ne 0$ for $q\in\{\delta_2,\delta_1,-2\}$.
It follows that $\dot{q}$ is a unique zero of the polynomial $h_1(q)$ in the interval
$(-\infty,-2)$, and it lies in $(\delta_2,\delta_1)$.
This establishes the result.
\renewcommand{\arraystretch}{1.1}
\begin{table}
\centering
\small
\begin{tabular}{c|rrrr}
\hline\hline
$q$ & \multicolumn{1}{c}{$-\infty$} & \multicolumn{1}{c}{$\delta_2$} & \multicolumn{1}{c}{$\delta_1$} & \multicolumn{1}{c}{$-2$} \\
\hline
$p_0(q)$ & $1\phantom{.000}$ & $34580357.527$ & $-26298056.749$ & $-28565\phantom{.000}$ \\
$p_1(q)$ & $-12\phantom{.000}$ & $-718430082.853$ & $-507474344.973$ & $122781\phantom{.000}$ \\
$p_2(q)$ & $2.743$ & $311663029.626$ & $266643545.132$ & $12364.729$ \\
$p_3(q)$ & $-14.036$ & $246983445.454$ & $220775924.006$ & $85700.578$ \\
$p_4(q)$ & $-486.554$ & $-1259452941.839$ & $-1109548401.763$ & $-270893.523$ \\
$p_5(q)$ & $2.531$ & $925925.414$ & $827178.367$ & $443.253$ \\
$p_6(q)$ & $327.260$ & $20129158.376$ & $18298188.408$ & $35541.315$ \\
$p_7(q)$ & $3.831$ & $32296.305$ & $29732.648$ & $94.326$ \\
$p_8(q)$ & $-873.420$ & $-1276456.983$ & $-1195989.727$ & $-14321.836$ \\
$p_9(q)$ & $0.952$ & $214.397$ & $204.010$ & $6.607$ \\
$p_{10}(q)$ & $96.827$ & $4548.563$ & $4414.347$ & $578.867$ \\
$p_{11}(q)$ & $-4.078$ & $-23.327$ & $-22.919$ & $-6.200$ \\
$p_{12}(q)$ & $16.052$ & $16.052$ & $16.052$ & $16.052$ \\
\hline
$V(q)$ & \multicolumn{1}{c}{$8$} & \multicolumn{1}{c}{$8$} & \multicolumn{1}{c}{$7$} & \multicolumn{1}{c}{$7$} \\
\hline\hline
\end{tabular}
\caption{Sign variations of the Sturm sequence for $h_1(q)$}\label{sign changes for h1}
\end{table}
\end{proof}

\begin{lemma}\label{less than 10}
We have $g_{3/2}(q)<10$ for all $q\in(\delta_2,-2]$.
\end{lemma}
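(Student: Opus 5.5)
We mirror the argument of Lemma~\ref{less than 2}. First recall \eqref{at q=-2}, which gives $g_{3/2}(-2)=1<10$, and Lemma~\ref{continuous}, which says $g_{3/2}$ is continuous on $(\delta_2,-2]$. Suppose, to the contrary, that $g_{3/2}(q)\geqslant 10$ for some $q\in(\delta_2,-2)$. The Intermediate Value Theorem then yields $\dot q\in(\delta_2,-2)$ with $g_{3/2}(\dot q)=10$. Since the denominator in \eqref{g32} never vanishes, this means $\varphi_{10}(\dot q)=0$, where
\begin{equation*}
\varphi_{10}(q)=(s^*q^4-1)(s^*q^8-1)-10(s^*q^6-1)^2 .
\end{equation*}

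Next, treating $q$ and $s^*$ as indeterminates, we eliminate $s^*$ from $\varphi_{10}(q,s^*)$ and $\psi(q,s^*)$. As in Lemma~\ref{less than 2}, we compute a lexicographic Gr\"obner basis with $s^*>q$ and obtain a generator $\tilde h_{10}(q)$ of $\mathcal I\cap\mathbb Q[q]$. Since $\varphi_{10}$ is quadratic in $s^*$, the resultant with respect to $s^*$ is a cheap alternative. We factor $\tilde h_{10}$ and discard the factors that cannot vanish on $(\delta_2,-2)$, such as powers of $q$, $q\pm1$ and possibly $q+2$. What remains is a polynomial $h_{10}(q)$ with $h_{10}(\dot q)=0$.

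It then suffices to show that $h_{10}$ has no zero in $(\delta_2,-2)$. We would try two routes in order:
\begin{enumerate}
\item Regroup $h_{10}$ into a manifestly positive (or negative) expression for $q<-2$, as was done for $h_2$.
\item If that fails, build the Sturm sequence of $h_{10}$ and check, using exact arithmetic in $\mathbb Q$, that $V(\delta_2)=V(-2)$ and that $h_{10}(\delta_2),h_{10}(-2)\neq0$. This is exactly the procedure of Lemma~\ref{L2 unequal to 1}.
\end{enumerate}

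The step most likely to be the obstacle is the last one. The restriction to $(\delta_2,-2]$ suggests that $h_{10}$ may in fact have a root below $\delta_2$, for instance because $g_{3/2}$ grows without bound or approaches a limit above $10$ as $q\to-\infty$ through \eqref{s^*q^6}. A global sign argument would then be impossible, and the Sturm count over the finite interval is the reliable way to close the argument.

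A second possible issue is that the elimination introduces spurious roots, coming from the other branch of $s^*$ in \eqref{quadratic equation}. Such roots could lie in the interval even though $\varphi_{10}$ does not vanish there along the branch \eqref{s*}. If that happens, we would isolate each such root in a short rational interval using Sturm's Theorem. We would then check directly that $\varphi_{10}\neq0$ there with $s^*$ given by \eqref{s*}, or equivalently evaluate $g_{3/2}$ at the isolating endpoints and use monotonicity on that subinterval.
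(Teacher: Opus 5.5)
Your proposal is correct and follows the paper's proof: the Intermediate Value Theorem produces a point with $g_{3/2}=10$, you eliminate $s^*$ from $\varphi_{10}$ and $\psi$ (the paper gets $\tilde h_{10}(q)=(q-1)^2(q+1)^3h_{10}(q)$ with $h_{10}(q)=9q^8-331q^6+19q^5-771q^4+19q^3-331q^2+9$), and you finish with a Sturm count. The only difference is that the paper applies the IVT on all of $(-\infty,-2)$, using $g_{3/2}\to\infty$, and shows that the unique zero of $h_{10}$ there lies below $\delta_2$, whereas you localize directly to $(\delta_2,-2)$ and need only $V(\delta_2)=V(-2)$, which holds (both equal $5$ in the paper's table). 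As a result, no spurious roots arise in the interval, and your route~(i) would indeed fail, just as you anticipated.
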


\begin{proof}
By \eqref{s^*q^6}, we have
\begin{equation*}
    g_{3/2}(q) \rightarrow \infty \qquad (q\rightarrow-\infty).
\end{equation*}
Since $g_{3/2}(-2)=g_3(-2)/g_2(-2)=1$ (cf.~\eqref{at q=-2}),
it follows from Lemma~\ref{continuous} and the Intermediate Value Theorem that $g_{3/2}(\dot{q})=10$ for some 
$\dot{q}\in(-\infty,-2)$.
This implies that $\varphi_{10}(\dot{q})=0$, where (cf.~\eqref{g32})
\begin{equation*}
    \varphi_{10}(q)=(s^*q^4-1)(s^*q^8-1)-10(s^*q^6-1)^2.
\end{equation*}
Eliminating $s^*$ from $\varphi_{10}(q)$ and $\psi(q)$ gives $\tilde{h}_{10}(\dot{q})=0$, where $\tilde{h}_{10}(q)=(q-1)^2(q+1)^3h_{10}(q)$, and
\begin{equation*}
    h_{10}(q)=9q^8-331q^6+19q^5-771q^4+19q^3-331q^2+9.
\end{equation*}
Note that $h_{10}(\dot{q})=0$.
By Sturm's Theorem and Table \ref{sign changes for h10}, which gives the (rounded) values of the corresponding Sturm sequence $(p_0(q),p_1(q),\dots,p_8(q))$ and the number of sign variations $V(q)$ for $q\in\{-\infty,\delta_2,-2\}$, we find that $\dot{q}$ is a unique zero of $h_{10}(q)$ in the interval $(-\infty,-2)$, and it lies in $(-\infty,\delta_2)$.
It follows that $g_{3/2}(q)<10$ for all $q\in(\delta_2,-2]$, as desired.
\renewcommand{\arraystretch}{1.1}
\begin{table}
\centering
\small
\begin{tabular}{c|rrr}
\hline\hline
$q$ & \multicolumn{1}{c}{$-\infty$} & \multicolumn{1}{c}{$\delta_2$} & \multicolumn{1}{c}{$-2$} \\
\hline
$p_0(q)$ & $9\phantom{.000}$ & $-480832.091$ & $-33291\phantom{.000}$ \\
$p_1(q)$ & $-72\phantom{.000}$ & $-6279552.897$ & $82080\phantom{.000}$ \\
$p_2(q)$ & $82.750$ & $5347485.587$ & $12771\phantom{.000}$ \\
$p_3(q)$ & $-2320.886$ & $-22167170.805$ & $-103347.088$ \\
$p_4(q)$ & $-267.727$ & $-405242.422$ & $-5205.535$ \\
$p_5(q)$ & $1349.263$ & $326729.176$ & $12325.324$ \\
$p_6(q)$ & $79.517$ & $3033.771$ & $304.595$ \\
$p_7(q)$ & $-886.668$ & $-5505.465$ & $-1781.459$ \\
$p_8(q)$ & $8.976$ & $8.976$ & $8.976$ \\
\hline
$V(q)$ & \multicolumn{1}{c}{$6$} & \multicolumn{1}{c}{$5$} & \multicolumn{1}{c}{$5$} \\
\hline\hline
\end{tabular}
\caption{Sign variations of the Sturm sequence for $h_{10}(q)$}\label{sign changes for h10}
\end{table}
\end{proof}

\begin{lemma}\label{greater than 9}
We have $g_{3/2}(q)>9$ for all $q\in(-\infty,\delta_1)$.
\end{lemma}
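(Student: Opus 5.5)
The plan is to mirror the proof of Lemma~\ref{less than 10}, with $10$ replaced by $9$ and the intermediate-value argument run in the opposite direction.

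First, recall from \eqref{s^*q^6} that $g_{3/2}(q)\to\infty$ as $q\to-\infty$. Suppose, for contradiction, that $g_{3/2}(q)\leqslant 9$ for some $q\in(-\infty,\delta_1)$. By Lemma~\ref{continuous} and the Intermediate Value Theorem, there is then some $\dot{q}\in(-\infty,\delta_1)$ with $g_{3/2}(\dot{q})=9$. (If $g_{3/2}(q)<9$, take a point between $q$ and a sufficiently negative point where $g_{3/2}>9$.) This forces $\varphi_9(\dot{q})=0$, where
\begin{equation*}
    \varphi_9(q)=(s^*q^4-1)(s^*q^8-1)-9(s^*q^6-1)^2.
\end{equation*}

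Next, eliminate $s^*$ from $\varphi_9(q)$ and $\psi(q)$ by a lexicographic Gr\"obner basis computation with $s^*>q$, exactly as before. I expect the eliminant to factor as $\tilde{h}_9(q)=(q-1)^a(q+1)^b h_9(q)$, with $h_9$ a palindromic polynomial of degree $8$ analogous to $h_{10}$. Since $q^2-1\neq 0$ for $q<-2$, we get $h_9(\dot{q})=0$. One should check that $h_9(-2)\ne0$. This should hold because $g_{3/2}(-2)=1\ne 9$, but any spurious factor such as $q+2$ also needs care, since $-2\notin(-\infty,\delta_1)$ anyway.

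The key step is to form the Sturm sequence $(p_0,\dots,p_r)$ of $h_9$ and count sign variations at $-\infty$, $\delta_1$, and $-2$, recording the values in a table like Tables~\ref{sign changes for h1} and~\ref{sign changes for h10}. The goal is to show that every zero of $h_9$ in $(-\infty,-2)$ lies in $(\delta_1,-2)$, that is, $V(-\infty)=V(\delta_1)$. Then $\dot{q}$ cannot exist, which gives $g_{3/2}(q)>9$ on $(-\infty,\delta_1)$.

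The main obstacle is whether this numerical localization actually holds. We know $g_{3/2}=9$ somewhere, since $g_{3/2}$ runs from $1$ at $q=-2$ to $\infty$. The claim therefore needs that crossing to occur to the right of $\delta_1=-6.1$, and that $g_{3/2}$ never dips back to $9$ further left. This is consistent with the figure, where $9<g_3<10$ near $q\approx-6.15$ with $g_2\approx1$. If $V(-\infty)-V(\delta_1)$ turns out positive, I would fall back on a direct sign analysis of $h_9$ on $(-\infty,\delta_1]$, grouping terms as in the proof of Lemma~\ref{less than 2}, to show that $h_9$ keeps a constant sign there.
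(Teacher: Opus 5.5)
Your proposal is correct and is essentially the paper's proof: continuity together with $g_{3/2}\to\infty$ gives a crossing of the level $9$. Eliminating $s^*$ from $\varphi_9$ and $\psi$ then yields $\tilde{h}_9(q)=(q-1)^2(q+1)^3h_9(q)$ with $h_9(q)=8q^8-262q^6+17q^5-622q^4+17q^3-262q^2+8$, and the Sturm count $V(-\infty)=V(\delta_1)=6$, $V(-2)=5$ places the unique zero of $h_9$ in $(-\infty,-2)$ inside $(\delta_1,-2)$, so your fallback is never needed. The only difference is cosmetic: you argue by contradiction and need only $V(-\infty)=V(\delta_1)$, whereas the paper also evaluates at $-2$ to show that the crossing is unique.
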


\begin{proof}
We have $g_{3/2}(\dot{q})=9$ for some 
$\dot{q}\in(-\infty,-2)$, so that $\varphi_9(\dot{q})=0$, where
\begin{equation*}
    \varphi_9(q)=(s^*q^4-1)(s^*q^8-1)-9(s^*q^6-1)^2.
\end{equation*}
Eliminating $s^*$ from $\varphi_9(q)$ and $\psi(q)$ gives $\tilde{h}_9(\dot{q})=0$, where $\tilde{h}_9(q)=(q-1)^2(q+1)^3h_9(q)$, and
\begin{equation*}
    h_9(q)=8q^8-262q^6+17q^5-622q^4+17q^3-262q^2+8.
\end{equation*}
Note that $h_9(\dot{q})=0$.
Table \ref{sign changes for h9} gives the values of the corresponding Sturm sequence and the number of sign variations for $q\in\{-\infty,\delta_1,-2\}$.
It follows that $\dot{q}$ is a unique zero of $h_9(q)$ in the interval $(-\infty,-2)$, and it lies in $(\delta_1,-2)$.
In particular, we have $g_{3/2}(q)>9$ for all $q\in(-\infty,\delta_1)$.
\renewcommand{\arraystretch}{1.1}
\begin{table}
\centering
\small
\begin{tabular}{c|rrr}
\hline\hline
$q$ & \multicolumn{1}{c}{$-\infty$} & \multicolumn{1}{c}{$\delta_1$} & \multicolumn{1}{c}{$-2$} \\
\hline
$p_0(q)$ & $8\phantom{.000}$ & $819854.579$ & $-26392\phantom{.000}$ \\
$p_1(q)$ & $-64\phantom{.000}$ & $-6148987.812$ & $64628\phantom{.000}$ \\
$p_2(q)$ & $65.500$ & $3868748.628$ & $10235\phantom{.000}$ \\
$p_3(q)$ & $-1875.272$ & $-16542019.816$ & $-83655.879$ \\
$p_4(q)$ & $-217.284$ & $-308305.523$ & $-4209.236$ \\
$p_5(q)$ & $1138.730$ & $262572.427$ & $10339.590$ \\
$p_6(q)$ & $66.682$ & $2460.885$ & $254.673$ \\
$p_7(q)$ & $-723.578$ & $-4421.706$ & $-1455.034$ \\
$p_8(q)$ & $7.973$ & $7.973$ & $7.973$ \\
\hline
$V(q)$ & \multicolumn{1}{c}{$6$} & \multicolumn{1}{c}{$6$} & \multicolumn{1}{c}{$5$} \\
\hline\hline
\end{tabular}
\caption{Sign variations of the Sturm sequence for $h_9(q)$}\label{sign changes for h9}
\end{table}
\end{proof}

We now return to the discussion of $\Gamma$.
By \eqref{s*}, \eqref{L2, L3 positive}, and Lemma~\ref{less than 2}, we have $L_2=g_2(q)=1$.
By Lemma~\ref{L2 unequal to 1}, we also find $q\in(\delta_2,\delta_1)$.
But then, it follows from Lemmas~\ref{less than 10} and \ref{greater than 9} that
\begin{equation*}
    L_3=g_3(q)=g_2(q)g_{3/2}(q)=g_{3/2}(q)\in (9,10),
\end{equation*}
so $L_3$ cannot be integral, which is a contradiction.
Hence, we conclude that we must have $s^*=0$ as expected.
This also forces the constant term of \eqref{quadratic equation} to be zero, i.e., $q=-2$.
The intersection array of $\Gamma$ is described in \cite[Lemma~3.2.1]{Dickie1995D}, or in (d) in \cite[Theorem~3\,(v)]{Terwilliger1988GC}.
Setting $s^*=0$ and $q=-2$, we obtain
\begin{equation*}
    b_i=\frac{2(64-4^i)}{3}, \qquad c_i=\frac{4^i-1}{3} \qquad (0\leqslant i\leqslant 3).
\end{equation*}
This is the intersection array of the Hermitian dual polar graph $[^2\!A_5(2)]$; cf.~\cite[Theorem~9.4.3]{BCN1989B}.
By \cite[Theorem~9.4.7]{BCN1989B}, $\Gamma$ is isomorphic to $[^2\!A_5(2)]$.

This completes the proof of Theorem~\ref{main theorem}.

\section{Concluding remarks}
\label{sec: concluding remarks}

In Section~\ref{sec: classification of schemes with t=5}, we used the identity \eqref{Terwilliger and Vidunas formula} due to Terwilliger and Vidunas \cite{TV2004JAA} to show that the $Q$-polynomial idempotent $E$ in question is $Q$-bipartite or almost $Q$-bipartite.
Here, we mention that the same argument proves the following proposition, which is dual to Miklavi\v{c}'s result \cite[Theorem~6.3]{Miklavic2004EJC}:

\begin{proposition}
Referring to Assumption~\ref{assumption}, assume that $E$ is $Q$-polynomial, and that $a_1^*=0$.
Then, one of the following holds:
\begin{enumerate}
\item $E$ is $Q$-bipartite;
\item $E$ is almost $Q$-bipartite;
\item $a_i^*>0$ for $2\leqslant i\leqslant d$.
\end{enumerate}
\end{proposition}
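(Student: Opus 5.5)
The plan is to rerun the Terwilliger--Vidunas argument from Section~\ref{sec: classification of schemes with t=5}, this time using only $a_0^*=a_1^*=0$. Write
\begin{equation*}
P(\theta)=\gamma^*\theta^2+\omega\theta+\eta ,
\end{equation*}
a real polynomial of degree at most two. By \eqref{Terwilliger and Vidunas formula},
\begin{equation*}
a_i^*(\theta_i-\theta_{i-1})(\theta_i-\theta_{i+1})=P(\theta_i)\qquad(0\leqslant i\leqslant d).
\end{equation*}
For $1\leqslant i\leqslant d-1$ the factors $\theta_i-\theta_{i\pm1}$ are nonzero because the eigenvalues are distinct. For $i=0$ the left side vanishes automatically, since $a_0^*=0$. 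So the assumptions give $P(\theta_0)=P(\theta_1)=0$, while $\theta_0\neq\theta_1$. Thus $P$ has at most one further root unless it is identically zero.

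\textbf{Case 1: $P\equiv 0$.} Then $a_i^*(\theta_i-\theta_{i-1})(\theta_i-\theta_{i+1})=0$ for all $i$, so $a_i^*=0$ for $0\leqslant i\leqslant d-1$. At $i=d$ the factor $\theta_d-\theta_{d+1}$ may vanish, so no information is obtained there. Hence either $a_d^*=0$, and $E$ is $Q$-bipartite, or $a_d^*>0$, and $E$ is almost $Q$-bipartite. Nonnegativity of the Krein parameters makes this dichotomy exhaustive.

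\textbf{Case 2: $P\not\equiv0$.} Then $P$ vanishes at $\theta_0$, $\theta_1$ and possibly at one more point $\rho$, which is distinct from $\theta_0,\theta_1$ whenever $P$ is genuinely quadratic with a third root. Consequently $a_i^*=0$ can hold for at most one index $i$ with $2\leqslant i\leqslant d-1$, namely when $\theta_i=\rho$. The index $i=d$ is handled by the same identity whenever $\theta_d\neq\theta_{d+1}$. I then need to exclude the configuration in which some $a_j^*=0$ with $2\leqslant j\leqslant d$ but not all of $a_2^*,\dots,a_d^*$ vanish, i.e.\ an isolated zero.

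The main obstacle is exactly this step: ruling out a single isolated zero $a_j^*=0$. My first attempt would use the dual of Miklavič's argument via the Krein condition or the norm/triangle structure. If $a_j^*=0$, then $q_{1,j}^j=0$. By the dual of the standard distance-regular fact, the ``dual $a_i$'' pattern for a $Q$-polynomial ordering satisfies the following: if $a_1^*=0$ and $a_j^*=0$ for some $j\geqslant2$, then $a_i^*=0$ for all $1\leqslant i\leqslant j$. This is the dual analogue of the fact that in the cited result of Miklavič, $a_1=0$ together with $a_i=0$ forces $a_1=\dots=a_i=0$.

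Concretely, I would try to use the root-counting above. Suppose $a_j^*=0$ with $2\leqslant j\leqslant d-1$ and the $a_i^*$ are not all zero. Then $P$ has the three roots $\theta_0,\theta_1,\theta_j$, and a quadratic that is not identically zero cannot have three distinct roots. This forces $P\equiv0$, which is Case~1.

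The remaining possibility is $j=d$ with $\theta_d=\theta_{d+1}$, where the identity gives $P(\theta_d)=0$ regardless of $a_d^*$. In that situation $P$ still has the three distinct roots $\theta_0,\theta_1,\theta_d$, because $d\geqslant3$ makes $\theta_d$ distinct from $\theta_0$ and $\theta_1$. So again $P\equiv0$. This conclusion must be checked carefully: it should not also force $a_d^*=0$, since that would erase the almost $Q$-bipartite case.

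So the expected argument is pure root counting. Either $P\equiv0$, giving (i) or (ii), or $P$ has at most the roots $\theta_0,\theta_1$ among the relevant points. In the latter case $P(\theta_i)\neq0$ for $2\leqslant i\leqslant d$; where needed this uses $\theta_d\neq\theta_{d+1}$, which holds since otherwise $P(\theta_d)=0$. Then $a_i^*\neq0$, hence $a_i^*>0$ by nonnegativity, which is (iii).
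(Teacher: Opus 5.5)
Your final root-counting argument is correct and is the paper's proof. If $a_j^*=0$ for some $2\leqslant j\leqslant d$, then $\gamma^*\theta^2+\omega\theta+\eta$ vanishes at the three distinct points $\theta_0,\theta_1,\theta_j$, so it is identically zero. This gives $a_0^*=\cdots=a_{d-1}^*=0$, and hence (i) or (ii) according to whether $a_d^*$ is zero or positive. Delete the intermediate detours: the Case~2 suggestion that $P$ could have a further root $\rho$ is false, since a nonzero polynomial of degree at most two vanishing at $\theta_0\neq\theta_1$ vanishes nowhere else; and the appeal to an unproved ``dual Miklavi\v{c}'' fact is not needed.
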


\begin{proof}
Suppose that (iii) does not hold, i.e., $a_j^*=0$ for some $j\in\{2,3,\dots,d\}$.
Then, the LHS in \eqref{Terwilliger and Vidunas formula} equals zero for $i\in\{0,1,j\}$.
Since $\theta_0,\theta_1$, and $\theta_j$ are distinct, it follows that $\gamma^*=\omega=\eta=0$.
Then, again by \eqref{Terwilliger and Vidunas formula}, this in turn implies that
\begin{equation*}
    a_0^*=a_1^*=\cdots=a_{d-1}^*=0,
\end{equation*}
because the eigenvalues $\theta_0,\theta_1,\dots,\theta_d$ are distinct.
Hence, we have (i) if $a_d^*=0$ and (ii) if $a_d^*>0$, as desired.
\end{proof}

\noindent
On the other hand, Dickie \cite[Theorem~4.1.1]{Dickie1995D} showed that for a $Q$-polynomial idempotent $E$, that $a_1^*>0$ implies that $a_i^*>0$ for $2\leqslant i\leqslant d-1$ (where we do not need the $P$-polynomial property).
The result dual to this, concerning the $a_i$ for a $P$-polynomial adjacency matrix, follows easily from counting arguments; cf.~\cite[Proposition~5.5.1]{BCN1989B}.

For the rest of this section, the following assumption is in effect:

\begin{assumption}\label{assumption: Q only}
Let $(X,\mathcal{R})$ be a $Q$-polynomial association scheme with $d\geqslant 1$ classes, and let $E$ be a corresponding $Q$-polynomial idempotent, with multiplicity $m\geqslant 1$.
Let $E_0,E_1,\dots,E_d$ be the $Q$-polynomial ordering such that $E=E_1$.
Recall the dual eigenvalues $\theta_0^*,\theta_1^*,\dots,\theta_d^*$ and the cosines $\sigma_0,\sigma_1,\dots,\sigma_d$ of $(X,\mathcal{R})$ associated with $E$.
Let $\tilde{X}=\{\bm{\xi}_x:x\in X\}\subset S^{m-1}$ be the spherical embedding of $(X,\mathcal{R})$ with respect to $E$.
\end{assumption}

Delsarte, Goethals, and Seidel showed that every tight spherical $t$-design $\Omega$ is the spherical embedding of a $Q$-polynomial association scheme with $\lceil t/2\rceil$-classes with respect to a $Q$-polynomial idempotent \cite[p.~380]{DGS1977GD} (see also \cite[p.~322]{Godsil1993B}), and that $\Omega$ is antipodal (i.e., $\Omega=-\Omega$) if $t$ is odd \cite[Theorem~6.8]{DGS1977GD}.
In Introduction, we also mentioned that the tight spherical $5$-designs are, up to orthogonal transformation, in bijection with the isomorphism classes of nonbipartite Taylor graphs as in Theorem~\ref{main theorem}\,\eqref{Taylor}.
This last fact is known to some experts, but we know of no literature that explicitly mentions it.
Here, we prove it as a corollary to slightly more general results.

\begin{proposition}\label{antipodal iff Q-bipartite}
Referring to Assumption~\ref{assumption: Q only}, the following are equivalent:
\begin{enumerate}
    \item\label{1st: Q-bipartite} $E$ is $Q$-bipartite;    \item\label{2nd: antipodal} $\tilde{X}$ is antipodal, i.e., $\tilde{X}=-\tilde{X}$.
\end{enumerate}
\end{proposition}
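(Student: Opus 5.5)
The plan is to work entirely with the dual side: the Krein parameters and the entrywise product structure generated by $E$. The bridge between the two conditions is the identity $E\circ E=\frac{1}{|X|}(b_0^*E_0+a_1^*E_1+c_2^*E_2)$ together with its higher analogues \eqref{3-term recurrence*}. We also use that, since $E$ is $Q$-polynomial, the dual eigenvalues $\theta_0^*,\dots,\theta_d^*$ are distinct, so the points $\bm{\xi}_x$ are distinct.

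\emph{(i)$\Rightarrow$(ii).} Assume $a_i^*=0$ for all $i$. Then by \eqref{3-term recurrence*} and induction, the odd-indexed idempotents $E_1,E_3,\dots$ are obtained as Hadamard polynomials of odd degree in $E$. Likewise the even-indexed ones are Hadamard polynomials of even degree. In particular $\theta^*_i$ as a function of $i$ satisfies $v_j^*(\theta^*)$-type parity relations: the dual eigenvalue sequence of $E_j$ is $v_j^*(\theta_i^*)/(\ldots)$, with $v_j^*$ a polynomial of parity $j$.

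The cleaner route is as follows. $Q$-bipartite means the Krein array forces $q_{1,1}^1=0$, etc. Equivalently, the Christoffel--Darboux / three-term recurrence for the polynomials $v_j^*$ has zero middle coefficients, so $v_j^*(-x)=(-1)^jv_j^*(x)$. Since $\sum_j v_j^*(\theta_i^*)$-type orthogonality has its support on the set $\{\theta_i^*\}$, the roots of $v_{d+1}^*$, this set is symmetric under negation. Concretely, the $\theta_i^*$ are exactly the zeros of the characteristic polynomial of the tridiagonal Krein matrix $B_1^*$, which has zero diagonal when all $a_i^*=0$. A tridiagonal matrix with zero diagonal is similar to its negative (conjugate by $\operatorname{diag}(\pm1)$), so its spectrum $\{\sigma_i\}$ is symmetric about $0$. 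In particular $-1=\sigma_h$ for some $h$.

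Then for each $x$ there is $x'$ with $\bm{\xi}_x\cdot\bm{\xi}_{x'}=-1$, i.e.\ $\bm{\xi}_{x'}=-\bm{\xi}_x$ (the number of such $x'$ is the valency $k_h>0$ of $R_h$). Hence $\tilde X=-\tilde X$.

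\emph{(ii)$\Rightarrow$(i).} Assume $\tilde X=-\tilde X$. Then $x\mapsto x'$ with $\bm{\xi}_{x'}=-\bm{\xi}_x$ is well defined since the points are distinct. So $-1$ is a cosine, say $\sigma_h=-1$, and the corresponding $A_h$ is a permutation matrix $P$ of this involution. Then $PE=-E$, since $(PE)_{x,y}=E_{x',y}=-E_{x,y}$.

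Now compute $E\circ E$: we have $P(E\circ E)=(PE)\circ(PE)$ entrywise via row permutation, so $P(E\circ E)=E\circ E$. More generally, by induction on \eqref{3-term recurrence*}, $E_j$ is a Hadamard polynomial in $E$. Its odd part picks up the sign $-1$ under left multiplication by $P$, and its even part is invariant. Since $P\in\bm{A}$ acts on each $E_j$ by a scalar $\pm1$ (it is $\sum_j P_{jh}E_j$ with $P^2=I$), we show by induction on $j$ that $PE_j=(-1)^jE_j$. Applying $P$ to \eqref{3-term recurrence*}, where $P(E\circ E_i)=(PE)\circ(PE_i)$, gives
\[
(-1)^{i+1}(E\circ E_i)=\frac{1}{|X|}\bigl((-1)^{i-1}b_{i-1}^*E_{i-1}+(-1)^ia_i^*E_i+(-1)^{i+1}c_{i+1}^*E_{i+1}\bigr).
\]
Comparing coefficients of $E_i$ yields $a_i^*=-a_i^*$, so $a_i^*=0$. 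The same comparison keeps the induction going, since the coefficient of $E_{i+1}$ is consistent with $PE_{i+1}=(-1)^{i+1}E_{i+1}$, using $c_{i+1}^*\neq0$.

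The step I expect to need the most care is the identity $P(M\circ N)=(PM)\circ N'$. Here $P$ is a permutation acting on rows only, so $P(M\circ N)=(PM)\circ(PN)$ holds because permuting rows commutes with entrywise products. This is fine. In the first direction, I must check that the spectrum of the Krein tridiagonal matrix $B^*_1$ is exactly $\{\theta_i^*\}$; this is standard, via $E\circ A_i$-eigenvalue duality $B_1^*$ acting on the columns of $Q$.
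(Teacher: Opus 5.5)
Your proof is correct, but it is organized differently from the paper's.

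For (i)$\Rightarrow$(ii), the paper gives no argument and simply cites Martin--Muzychuk--Williford. Your spectral argument is a self-contained replacement. From $E\circ E_j=\frac{1}{|X|}\sum_h q_{1,j}^hE_h$ one gets $QB_1^*=\operatorname{diag}(\theta_0^*,\dots,\theta_d^*)\,Q$, so the $\theta_\ell^*$ are exactly the eigenvalues of the tridiagonal Krein matrix. Under $Q$-bipartiteness this matrix has zero diagonal, so it is conjugate to its negative by $\operatorname{diag}((-1)^i)$. Hence $-m$ is a dual eigenvalue, and the corresponding relation pairs each point with its antipode. Your first paragraph about $v_j^*$ and Christoffel--Darboux adds nothing and can be dropped; the ``Concretely'' paragraph stands on its own.

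For (ii)$\Rightarrow$(i), both proofs rest on the same parity phenomenon, but the mechanics differ. The paper orders the relations by decreasing cosine and derives $\theta_\ell^*=-\theta_{d-\ell}^*$ and $k_\ell=k_{d-\ell}$. It then shows $Q_{\ell,i}=(-1)^iQ_{d-\ell,i}$ from the parity of $E_i$ as a $\circ$-polynomial in $E$, and concludes $a_i^*=-a_i^*$ from the formula $a_i^*=\frac{1}{|X|m_i}\sum_\ell k_\ell\theta_\ell^*Q_{\ell,i}^2$. You instead realize the antipodal map as the involutive permutation matrix $P=A_h$ with $PE=-E$. Using that row permutations commute with $\circ$, you obtain $a_i^*=0$ and $PE_{i+1}=(-1)^{i+1}E_{i+1}$ simultaneously by comparing coefficients in the three-term recurrence.

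Your version avoids the trace formula and any re-indexing. The paper's route, in exchange, produces the facts $\sigma_\ell=-\sigma_{d-\ell}$ and $k_\ell=k_{d-\ell}$, which it reuses immediately afterwards for the $d=3$ Taylor-graph proposition.

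One presentational point: your display already writes $(-1)^{i+1}c_{i+1}^*E_{i+1}$, which is the thing being proved. Instead, write $PE_{i+1}=\epsilon E_{i+1}$ with $\epsilon=\pm1$ (legitimate since $P\in\bm{A}$ and $P^2=I$), and deduce $\epsilon=(-1)^{i+1}$ from $c_{i+1}^*\neq0$.
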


\begin{proof}
The implication \eqref{1st: Q-bipartite} $\Rightarrow$ \eqref{2nd: antipodal} was shown in (the proof of) \cite[Theorem~4.1]{MMW2007JAC}.

Assume that \eqref{2nd: antipodal} holds.
Arrange the adjacency matrices $A_0,A_1,\dots,A_d$ so that the cosines are in decreasing order $\sigma_0>\sigma_1>\cdots>\sigma_d$.
Then, since $\tilde{X}=-\tilde{X}$, it follows that $\sigma_0=1$, $\sigma_d=-1$, and more generally, $\sigma_{\ell}=-\sigma_{d-\ell}$ $(0\leqslant \ell\leqslant d)$, i.e., $\theta_{\ell}^*=-\theta_{d-\ell}^*$ $(0\leqslant \ell\leqslant d)$.
To see this, pick any $x,y\in X$ such that $\bm{\xi}_x=-\bm{\xi}_y$.
Then, for every $z\in X$, we have $\bm{\xi}_x\cdot\bm{\xi}_z=\sigma_{\ell}$ if and only if $\bm{\xi}_y\cdot\bm{\xi}_z=-\sigma_{\ell}=\sigma_{d-\ell}$.
For $0\leqslant \ell\leqslant d$, let $k_{\ell}$ denote the valency of the regular graph $(X,R_{\ell})$, where $R_{\ell}\in\mathcal{R}$ corresponds to $A_{\ell}$.
Then, we also have $k_{\ell}=k_{d-\ell}$ $(0\leqslant \ell\leqslant d)$.

We now show that $a_i^*=0$ $(0\leqslant i\leqslant d)$ by induction on $i$.
We always have $a_0^*=0$.
Assume that $a_j^*=0$ $(0\leqslant j<i)$.
By \eqref{3-term recurrence*}, we find recursively that $E_i$ is an even (resp.~odd) $\circ$-polynomial in $E$ if $i$ is even (resp.~odd).
Hence, if we write
\begin{equation*}
    E_i=\frac{1}{|X|}\sum_{\ell=0}^d Q_{\ell,i}A_{\ell},
\end{equation*}
then it follows from \eqref{dual eigenvalues} that $Q_{\ell,i}=(-1)^iQ_{d-\ell,i}$ $(0\leqslant \ell\leqslant d)$.
By these comments and \eqref{dual eigenvalues}, we have
\begin{equation*}
    a_i^*=\frac{1}{|X|m_i}\sum_{\ell=0}^dk_{\ell}\theta_{\ell}^*Q_{\ell,i}^2=\frac{1}{|X|m_i}\sum_{\ell=0}^d k_{d-\ell}(-\theta_{d-\ell}^*)Q_{d-\ell,i}^2=-a_i^*,
\end{equation*}
where the first equality is from \cite[Theorem~2.3.2]{BCN1989B}, which we can easily derive by computing $\operatorname{trace}E_i(E\circ E_i)$ in two ways.
It follows that $a_i^*=0$.
We have now shown that $a_0^*=a_1^*=\cdots=a_d^*=0$, so $E$ is $Q$-bipartite, i.e., \eqref{1st: Q-bipartite} holds.
\end{proof}

It is known (see \cite[Corollary~8.2.2]{BCN1989B}) that every bipartite distance-regular graph with diameter three has precisely two $Q$-polynomial idempotents, except when it is a bipartite Taylor graph (or equivalently, a regular complete bipartite graph minus a perfect matching), in which case it has only one $Q$-polynomial idempotent.
Below is the dual result.

\begin{proposition}\label{tight 5-designs are always P-polynomial}
Referring to Assumption~\ref{assumption: Q only}, assume that $E$ is $Q$-bipartite and $d=3$.
Then, $(X,\mathcal{R})$ is the association scheme of a Taylor graph $\Gamma$.
In particular, $(X,\mathcal{R})$ has precisely two $P$-polynomial adjacency matrices if $\Gamma$ is nonbipartite, and has only one if $\Gamma$ is bipartite.
\end{proposition}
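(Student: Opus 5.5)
We use Proposition~\ref{antipodal iff Q-bipartite} together with the structure of the $Q$-bipartite Krein array. Since $E$ is $Q$-bipartite, $\tilde{X}=-\tilde{X}$. Order the relations so that the cosines decrease, $\sigma_0>\sigma_1>\sigma_2>\sigma_3$. The proof of Proposition~\ref{antipodal iff Q-bipartite} then gives $\sigma_3=-1$, $\sigma_2=-\sigma_1$, and $k_\ell=k_{3-\ell}$. In particular $k_3=k_0=1$, so $R_3$ is a perfect matching $x\mapsto x'$ with $\bm{\xi}_{x'}=-\bm{\xi}_x$.

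Set $\Gamma=(X,R_1)$ and $k=k_1=k_2$, so that $|X|=2(k+1)$. The first step is to show that $\Gamma$ is an antipodal double cover of $K_{k+1}$ with intersection array $\{k,\mu,1;1,\mu,k\}$.
\begin{itemize}
\item Since $\bm{\xi}_{x'}\cdot\bm{\xi}_z=-\bm{\xi}_x\cdot\bm{\xi}_z$, the map $z\mapsto z$ exchanges the $R_1$-neighbours of $x$ with the $R_2$-neighbours of $x'$. Hence $R_2(x)=R_1(x')$.
\item $X$ is partitioned into the $k+1$ antipodal pairs $\{x,x'\}$. Every $z\notin\{x,x'\}$ is adjacent in $\Gamma$ to exactly one of $x,x'$.
\item So $\Gamma$ is a $2$-fold cover of $K_{k+1}$ with fibres the antipodal pairs.
\end{itemize}

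The second step is to show that $\Gamma$ is distance-regular of diameter $3$ with $A_i$ as its distance matrices. By (AS\ref{AS4}), $A_1^2$ is a combination of $A_0,\dots,A_3$, say $A_1^2=kI+\lambda A_1+\mu A_2+\nu A_3$. Then $\nu=0$, because a common neighbour $z$ of $x$ and $x'$ is impossible: we would have $\bm{\xi}_z\cdot\bm{\xi}_x=\sigma_1$ and $\bm{\xi}_z\cdot\bm{\xi}_{x'}=-\sigma_1=\sigma_2\neq\sigma_1$. We also need $\mu>0$; otherwise $A_1$ would generate a proper subalgebra, and $\Gamma$ would be $(k+1)K_2$ or similar. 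Given $\mu>0$, every $R_2$-pair is at distance $2$ and every $R_3$-pair at distance $3$, so $\Gamma$ is distance-regular with diameter $3$. Its intersection array is $\{k,\mu,1;1,\mu,k\}$, which is the definition of a Taylor graph.

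To rule out $\mu=0$: in that case $A_1^2=kI+\lambda A_1$, so $\bm{A}$ would need $A_2$ outside the algebra generated by $A_1$. Instead, apply the same argument to $R_2$, which has the same valency and the same covering structure. One of $A_1,A_2$ must have nonzero $A_2$- (respectively $A_1$-) coefficient in its square. If $\mu=0$ for $A_1$, then every connected component of $\Gamma$ is a clique inside which $R_2$ never occurs. A connectivity and counting argument should then show $\Gamma_2$ is connected with the required property, and we relabel. I expect this to be the main obstacle; it needs care, and the fallback is the Krein-condition dual argument via \cite[Theorem~8.2.4]{BCN1989B}.

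The third step counts $P$-polynomial adjacency matrices. Any $P$-polynomial matrix has distance-$d$ relation equal to a perfect matching, which must be $R_3$, since $k_1=k_2=k>1$ when $d=3$. So the candidates are $A_1$ and $A_2$. The earlier computation $A_2=J-I-A_1-A_3$ shows that $\Gamma_2$ has intersection array $\{k,k-\mu-1,1;1,k-\mu-1,k\}$. It is $P$-polynomial exactly when $k-\mu-1>0$, that is, when $\Gamma$ is nonbipartite. This gives two $P$-polynomial matrices in the nonbipartite case and one in the bipartite case.
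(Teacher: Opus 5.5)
Your outline (antipodality, $R_3$ a perfect matching with $R_2(x)=R_1(x')$, no $A_3$-term in $A_1^2$, reading off a Taylor array, then counting $P$-polynomial matrices) follows the paper's, and your first and third steps are fine. The gap is in your second step, which is where the content of the proposition lies. When $\mu=0$ you never show that $(X,R_1)$ or $(X,R_2)$ is a Taylor graph.

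Your reason for $\mu>0$ does not work. If $\mu=\nu=0$, then $A_1^2=kI+\lambda A_1$ and $A_1\bm{1}=k\bm{1}$ force $\lambda=k-1$. So $\Gamma\cong 2K_{k+1}$, not $(k+1)K_2$. That $A_1$ then generates a proper subalgebra of $\bm{A}$ is not a contradiction, since nothing requires a single relation of a scheme to generate $\bm{A}$. The relabeling route is only sketched. The fallback through \cite[Theorem~8.2.4]{BCN1989B} is circular: that result concerns distance-regular graphs, so it presupposes the $P$-polynomial structure you are trying to establish.

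The missing step is short. From $R_2(x)=R_1(x')$ you get $A_1A_3=A_2$ and $A_3^2=I$. Multiplying $A_1^2=kA_0+\lambda A_1+\mu A_2$ by $A_3$ once, resp.\ twice, gives
\begin{equation*}
A_1A_2=\mu A_1+\lambda A_2+kA_3, \qquad A_2^2=kA_0+\lambda A_1+\mu A_2,
\end{equation*}
and expanding $A_1J=kJ$ gives $k=\lambda+\mu+1$.
\begin{itemize}
\item The first identity is what yields $b_1=\mu$ in your array.
\item The second shows that $(X,R_2)$ satisfies the same relations with $\lambda$ and $\mu$ interchanged.
\end{itemize}
Hence $(X,R_1)$ is a Taylor graph $\{k,\mu,1;1,\mu,k\}$ if $\mu>0$, and $(X,R_2)$ is a Taylor graph $\{k,\lambda,1;1,\lambda,k\}$ if $\lambda>0$. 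Since $k\geqslant 2$, at least one of these occurs. (For $k=1$, $|X|=4$ forces $m=1$, which leaves room for only two distinct cosines.) Moreover, $\mu=0$ (resp.\ $\lambda=0$) means that $(X,R_2)$ (resp.\ $(X,R_1)$) is bipartite. This is the paper's argument.

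Alternatively, in your decreasing-cosine ordering you can exclude $\mu=0$ outright. Computing $\operatorname{trace}(A_1E)$ in two ways, as for \eqref{Askey-Wilson}, gives $A_1E=k\sigma_1E$ with $0<k\sigma_1<k$. But $2K_{k+1}$ has spectrum $\{k,-1\}$, so $k\sigma_1$ cannot be an eigenvalue of it. Then $k=\lambda+\mu+1\geqslant 2$ follows for free, which is the $k>1$ you use in your third step.
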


\begin{proof}
By Proposition~\ref{antipodal iff Q-bipartite}, $\tilde{X}$ is antipodal.
We will use the same notation as in the proof of Proposition~\ref{antipodal iff Q-bipartite} with $d=3$.
In particular, $\sigma_0=1$, $\sigma_3=-1$, $\sigma_2=-\sigma_1$, $k_0=k_3=1$, and $k_1=k_2$.
Observe that $0<\sigma_1<1$.
Note also that $A_iA_3=A_{3-i}$ $(0\leqslant i\leqslant 3)$, because for $x,y\in X$, $(A_iA_3)_{x,y}=1$ if and only if $\bm{\xi}_x\cdot(-\bm{\xi}_y)=\sigma_i$, i.e., $\bm{\xi}_x\cdot\bm{\xi}_y=-\sigma_i=\sigma_{3-i}$.
For $x,y\in X$, $(A_1^2)_{x,y}$ counts the common neighbors of $x$ and $y$ in the graph $(X,R_1)$.
Since $0<\arccos\sigma_1<\pi/2$, that $(A_1^2)_{x,y}>0$ implies that $\bm{\xi}_x\cdot\bm{\xi}_y\in\{\sigma_0,\sigma_1,\sigma_2\}$.
It follows that $A_1^2$ is a linear combination of $A_0,A_1$, and $A_2$.
Let us write $A_1^2=kA_0+\lambda A_1+\mu A_2$, where $k=k_1$.
Multiplying both sides by $A_3$ (resp.~$A_3^2=I$), we find $A_1A_2=\mu A_1+\lambda A_2+kA_3$ (resp.~$A_2^2=kA_0+\lambda A_1+\mu A_2$).

Since $kJ=A_1J=A_1(A_0+A_1+A_2+A_3)=k(A_0+A_3)+(\lambda+\mu+1)(A_1+A_2)$, we have $k=\lambda+\mu+1\,(\geqslant 2)$.
If $\mu>0$, then it follows that \eqref{3-term recurrence} holds with $A=A_1$ for $0\leqslant i\leqslant 3$, so $(X,R_1)$ is distance-regular with intersection array $\{k,\mu,1;1,\mu,k\}$, i.e., a Taylor graph.
Likewise, if $\lambda>0$, then $(X,R_2)$ is a Taylor graph with intersection array $\{k,\lambda,1;1,\lambda,k\}$.
Moreover, note that $\mu=0$ if and only if $(X,R_2)$ is a bipartite Taylor graph, and that $\lambda=0$ if and only if $(X,R_1)$ is a bipartite Taylor graph.
The result follows.
\end{proof}

\begin{corollary}\label{Q-bipartite and d=3 imply P-polynomial}
Let $m\geqslant 3$ be an integer.
Then, up to orthogonal transformation, the tight spherical $5$-designs in $S^{m-1}$ are in bijection with the isomorphism classes of nonbipartite Taylor graphs having parameters as in Theorem~\ref{main theorem}\,\eqref{Taylor}.
\end{corollary}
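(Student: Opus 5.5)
We construct two maps and check that they are mutually inverse. Let $\Omega\subset S^{m-1}$ be a tight spherical $5$-design. By the Delsarte--Goethals--Seidel result recalled above, $\Omega$ is the spherical embedding of a $Q$-polynomial association scheme with $\lceil 5/2\rceil=3$ classes with respect to a $Q$-polynomial idempotent $E$. By \cite[Theorem~6.8]{DGS1977GD}, $\Omega$ is antipodal, so Proposition~\ref{antipodal iff Q-bipartite} shows that $E$ is $Q$-bipartite. Proposition~\ref{tight 5-designs are always P-polynomial} then says the scheme is that of a Taylor graph. To get a nonbipartite one, note that Theorem~\ref{t in terms of Krein numbers} gives $a_1^*=a_2^*=0$. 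If the scheme came only from a bipartite Taylor graph, Proposition~\ref{Q-polynomial} would give $t(\Omega)\leqslant 3$, which is impossible. Hence the scheme has a nonbipartite Taylor graph $\Gamma$ among its $P$-polynomial structures.

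The two $P$-polynomial adjacency matrices of the scheme are $A_1$ and $A_2$, giving $\Gamma$ and $\Gamma_2$. We choose the one for which $E$ is the idempotent of the second largest eigenvalue, that is, the one whose cosine $\sigma_1$ is positive for adjacent vertices. This is the normalization used in the Taylor-graph subsection of Section~\ref{sec: classification of schemes with t=5}. Now Theorem~\ref{main theorem} (or directly the computation in that subsection, using $c_2^*=2m/(m+2)$ and $m_2=k$) forces $k=(m-1)(m+2)/2$ and the stated value of $\mu$. This gives a well-defined map from designs to isomorphism classes of graphs. Orthogonally equivalent designs give isomorphic graphs, since the graph is read off from inner products.

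Conversely, let $\Gamma$ be a nonbipartite Taylor graph with these parameters. Theorem~\ref{main theorem} shows that its spherical embedding with respect to $E_{i_1}$ is a $5$-design. Its size is $|X|=2(k+1)=m(m+1)$, which meets \eqref{Fisher bound}, so the design is tight. Isomorphic graphs give orthogonally equivalent embeddings, because the Gram matrix $|X|m^{-1}E$ is determined by the scheme up to simultaneous permutation.

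It remains to check that the two maps are inverse to each other; this step needs the most care. Starting from $\Gamma$, its embedding returns $\Gamma$: the adjacency relation is recovered as the inner product $\sigma_1>0$. Starting from $\Omega$, the graph we extract has spherical embedding $\Omega$ itself, so the composite is the identity up to orthogonal transformation. The subtle point is the choice between $\Gamma$ and $\Gamma_2$, since both are Taylor graphs on the same scheme. By the discussion preceding Note~\ref{icosahedron is special}, only the one for which $E$ corresponds to the second largest eigenvalue satisfies the parameter identities. So the choice is canonical, and Note~\ref{icosahedron is special} covers the degenerate case $m=3$ where $\Gamma\cong\Gamma_2$.
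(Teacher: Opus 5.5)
Your proposal is correct and follows essentially the same route as the paper. The steps match: antipodality gives $Q$-bipartiteness via Proposition~\ref{antipodal iff Q-bipartite}, Proposition~\ref{tight 5-designs are always P-polynomial} yields a Taylor scheme, the last statement of Proposition~\ref{Q-polynomial} rules out bipartite Taylor graphs, and Theorem~\ref{main theorem} together with Note~\ref{icosahedron is special} fixes the parameters and the correspondence. Your one addition, taking the Taylor graph whose adjacent vertices have positive cosine so that $E=E_{i_1}$, spells out the paper's terse uniqueness step and makes both maps canonical, so the case $m=3$ needs no special handling.
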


\begin{proof}
Let $\Omega$ be a tight spherical $5$-design in $S^{m-1}$.
We already mentioned that $\Omega$ is antipodal, and is the spherical embedding of a $Q$-polynomial association scheme $(X,\mathcal{R})$ with three classes with respect to a corresponding $Q$-polynomial idempotent $E$.
Observe that $(X,\mathcal{R})$ is uniquely determined by $\Omega$ up to isomorphism (in the natural sense).
By Propositions~\ref{antipodal iff Q-bipartite} and \ref{tight 5-designs are always P-polynomial}, and the last statement in Proposition~\ref{Q-polynomial}, $(X,\mathcal{R}) $ has two $P$-polynomial adjacency matrices, both of which correspond to nonbipartite Taylor graphs.
Now, the result follows from Theorem~\ref{main theorem} and Note~\ref{icosahedron is special}.
\end{proof}

Koolen and Suzuki conjectured that the girth of a distance-regular graph with valency $k\geqslant 3$ is at most twelve; cf.~\cite[Problem~32]{DKT2016EJC}.
Dually, we may consider the following problem:

\begin{problem}
Show that there exists an absolute constant $K>0$ such that we have $t(\tilde{X})\leqslant K$ for the spherical embedding $\tilde{X}$ of every $Q$-polynomial association scheme $(X,\mathcal{R})$ with respect to a corresponding $Q$-polynomial idempotent $E$, provided that the multiplicity $m\geqslant 3$.
Can we take $K=11$?
\end{problem}

\noindent
We may remark that tight spherical $t$-designs in $S^{m-1}$ with $m\geqslant 3$ can exist only for $t\in\{1,2,3,4,5,7,11\}$; cf.~\cite{BD1979JMSJ,BD1980JLMS}.

We conclude the paper with a conjecture on a subclass of $Q$-polynomial association schemes.
To define this subclass, we recall the \emph{Terwilliger algebra} of an association scheme \cite{Terwilliger1992JAC}.
Let $(X,\mathcal{R})$ be an association scheme with $d\geqslant 1$ classes, and let $A_0,A_1,\dots,A_d$ be its adjacency matrices.
Fix a `base vertex' $x\in X$.
For every $i$, let $E_i^*=E_i^*(x)\in\mathbb{R}^{X\times X}$ be the diagonal matrix whose diagonal entries are given by $(E_i^*)_{y,y}=(A_i)_{x,y}$ $(y\in X)$.
Note that $E_0^*+E_1^*+\cdots+E_d^*=I$, and that $E_i^*E_j^*=\delta_{i,j}E_i^*$ $(0\leqslant i,j\leqslant d)$.
The $E_i^*$ are called the \emph{dual idempotents} of $(X,\mathcal{R})$ with respect to $x$.
The Terwilliger algebra of $(X,\mathcal{R})$ with respect to $x$ is the $\mathbb{C}$-subalgebra $\bm{T}(x)$ of $\mathbb{C}^{X\times X}$ generated by the adjacency matrices $A_0,A_1,\dots,A_d$ and the dual idempotents $E_0^*,E_1^*,\dots,E_d^*$.
We note that $\bm{T}(x)$ is semisimple because it is closed under conjugate-transpose.
An irreducible $\bm{T}(x)$-module $W$ is called \emph{thin} (resp.~\emph{dual thin}) if $\dim E_i^*W\leqslant 1$ (resp.~$\dim E_iW\leqslant 1$) for $0\leqslant i\leqslant d$.
The algebra $\bm{T}(x)$ is called \emph{thin} (resp.~\emph{dual thin}) if every irreducible $\bm{T}(x)$-module is thin (resp.~dual thin).
Finally, we say that $(X,\mathcal{R})$ is \emph{thin} (resp.~\emph{dual thin}) if $\bm{T}(x)$ is thin (resp.~dual thin) for every base vertex $x\in X$.
We use the same terminology for distance-regular graphs.
Like the $P$-polynomial and $Q$-polynomial properties, the thin and dual thin properties severely restrict the structure of an association scheme, and have been actively studied; see \cite{DKT2016EJC}.

Collins \cite[Corollary~3.1]{Collins1997GC} showed that if $\Gamma$ is a thin distance-regular graph with diameter $d\geqslant 3$ and valency $k\geqslant 3$, then its girth $g(\Gamma)$ is $3,4,6$, or $8$, and we have $g(\Gamma)=8$ precisely when $\Gamma$ is the collinearity graph of a generalized octagon of order $(1,k-1)$, which has diameter four.
There is a refinement of this result due to Suzuki \cite{Suzuki2006EJC}.
In view of the complete duality between our main result (Theorem~\ref{main theorem}) and Lewis' and Miklavi\v{c}'s results \cite{Lewis2000DM,Miklavic2025EJC} mentioned in Introduction, it seems safe to make the following conjecture:

\begin{conjecture}\label{conjecture}
Referring to Assumption~\ref{assumption: Q only}, assume that $(X,\mathcal{R})$ is dual thin, $d\geqslant 3$, and $m\geqslant 3$.
Then, the strength $t(\tilde{X})$ of $\tilde{X}$ satisfies $t(\tilde{X})\leqslant 7$.
Moreover, if $t(\tilde{X})=7$, then $\tilde{X}$ is a tight spherical $7$-design (so $(X,\mathcal{R})$ is $Q$-bipartite with $d=4$ by Proposition~\ref{antipodal iff Q-bipartite}).
\end{conjecture}

\noindent
By a result of Dickie and Terwilliger \cite[Theorem~2.2]{DT1998JAC}, dual thin $Q$-polynomial association schemes with $a_1^*=0$ are either $Q$-bipartite or almost $Q$-bipartite.
We may also try to dualize (in part) Suzuki's refinement \cite{Suzuki2006EJC}.
Concerning the last statement of Conjecture~\ref{conjecture}, we mention the following, which is dual to \cite[Lemma~2.2]{Collins1997GC}:

\begin{proposition}
Referring to Assumption~\ref{assumption: Q only}, assume that $E$ is $Q$-bipartite and $d\in\{3,4\}$.
Then, $(X,\mathcal{R})$ is dual thin.
\end{proposition}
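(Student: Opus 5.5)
The plan is to work with the standard characterization of dual thinness by duality with thinness. For a $Q$-bipartite scheme with $d\in\{3,4\}$, the question is whether, for each base vertex $x$, every irreducible $\bm{T}(x)$-module $W$ satisfies $\dim E_iW\leqslant 1$ for all $i$. Since $E$ is $Q$-polynomial, an irreducible module $W$ has a dual endpoint $r^*$ and a dual diameter $d^*$. Its support $\{i: E_iW\ne 0\}$ is the interval $\{r^*,\dots,r^*+d^*\}$, and the standard theory of $Q$-polynomial Terwilliger algebras gives $2r^*+d^*\geqslant d$ (the dual of the inequality $2r+\text{diam}\geqslant d$). The goal is to show that each such $W$ is dual thin, by a short case analysis on $(r^*,d^*)$, using that $E$ is $Q$-bipartite.

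First I would record what $Q$-bipartiteness gives. By Proposition~\ref{antipodal iff Q-bipartite}, $\tilde{X}$ is antipodal, so $\sigma_\ell=-\sigma_{d-\ell}$. The matrix $\sum_i (-1)^iE_i$ is then (up to ordering) the permutation matrix of the antipodal map, i.e.\ $A_d$ with $A_d^2=I$. Write $E^*=E^*(x)=\operatorname{diag}$ of $|X|E$ column $x$, the dual adjacency matrix $A^*$. Because $a_i^*=0$ for all $i$, we get $E_iA^*E_i=0$; that is, $A^*$ maps $E_iW$ into $E_{i-1}W+E_{i+1}W$.

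The dual-bipartite structure then splits each module into "even" and "odd" parts, in direct analogy with bipartite graphs, where the Collins-style argument of \cite[Lemma~2.2]{Collins1997GC} works. The key steps are as follows.

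\begin{enumerate}
\item The trivial module is always dual thin.
\item For a nontrivial $W$, note that $E_0W=0$, so $r^*\geqslant 1$.
\item Since $E_d^*$ picks out the single antipode of $x$ and $E_0^*$ picks out $x$, we have $\dim E_0^*W,\dim E_d^*W\leqslant 1$, with equality only for the trivial module. The antipodal symmetry given by $A_d$ (which commutes with $\bm{T}(x)$ up to swapping $E_i^*$ and $E_{d-i}^*$) lets me restrict attention to modules supported in $E_1^*V,\dots,E_{d-1}^*V$.
\item Combining $2r^*+d^*\geqslant d$ with $r^*\geqslant1$ and $r^*+d^*\leqslant d$, the only possibilities for $d=3$ are $d^*\in\{1,2\}$ with small $r^*$. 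For $d=4$ there are a few more cases.
\end{enumerate}

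In each case I would show $\dim E_iW\leqslant 1$ as follows. Take $0\ne w\in E_{r^*}W$. Then $W$ is spanned by the vectors $E_jA^*E_{j-1}\cdots A^*w$, by the dual of the raising-map argument (the dual of the standard fact that $W=\bm{M}w$ when $W$ is thin). The only obstruction to dual thinness is a nonzero "lowering-then-raising" component, and $a_i^*=0$ kills the flat part. So it remains to check that $E_{r^*}W$ is one-dimensional. Since $E_{r^*}W$ is an irreducible module for $E_{r^*}\bm{T}E_{r^*}$, this amounts to showing $E_{r^*}\bm{T}E_{r^*}$ is commutative on it.

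The main obstacle is the case $d=4$ with $r^*=1$, $d^*=2$ (or $3$), where $E_1W$ might a priori have dimension greater than $1$. There I would try the dual of Collins' counting argument. I would show that $E_{r^*}A^*E_{r^*+1}A^*E_{r^*}$ acts on $E_{r^*}W$ as a scalar, computing it via the Krein conditions $q_{1,i}^i=0$ and $E\circ E_i$ from \eqref{3-term recurrence*}. The point is that, with $E_0W=0$ and antipodality, $E_{r^*}W$ is annihilated by $E_{r^*-1}A^*$, so the norm identity forces a scalar action. If that fails, I would fall back on Terwilliger's theorem that a $Q$-polynomial scheme is dual thin whenever each module has dual endpoint determined by its dual diameter (as in \cite{DT1998JAC}), and verify that condition directly for $d\leqslant 4$.
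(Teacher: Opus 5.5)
Your outline has a genuine gap at exactly the point where the work lies.

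\textbf{The raising-word spanning claim is circular.} You assert that $W$ is spanned by the raising words $E_jA^*E_{j-1}\cdots A^*w$. This is the dual of ``$W=\bm{M}w$ for thin $W$'', so it presupposes the dual thinness you are trying to prove. What you can legitimately say, from $E_iA^*E_j=0$ unless $|i-j|=1$ and the fact that every $A_\ell^*$ is a polynomial in $A^*$, is weaker. Namely, $W$ is spanned by all words $E_{i_k}A^*E_{i_{k-1}}\cdots A^*E_{r^*}w$ whose indices move by $\pm1$. These include words that go up and come back, such as $E_{r^*+1}A^*E_{r^*+2}A^*E_{r^*+1}A^*E_{r^*}w$. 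That such words are multiples of $E_{r^*+1}A^*E_{r^*}w$ is the whole content of the statement, and it does not follow from $\dim E_{r^*}W=1$. So the reduction ``it remains to check that $E_{r^*}W$ is one-dimensional'' is not valid.

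\textbf{The one-dimensionality is not obtained either.} The relation $E_{r^*-1}A^*E_{r^*}W=0$ is just the definition of the dual endpoint; antipodality plays no role. It only yields
$E_{r^*}A^*E_{r^*+1}A^*E_{r^*}w=E_{r^*}(A^*)^2w=mw+c_2^*E_{r^*}A_2^*E_{r^*}w$,
which is not visibly a multiple of $w$. Moreover, a single element acting as a scalar would not make $E_{r^*}\bm{T}(x)E_{r^*}$ commutative.

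\textbf{The fallback is not available.} I know of no theorem of the form you describe. The result \cite[Theorem~2.2]{DT1998JAC}, as used in this paper, has dual thinness as a hypothesis, not a conclusion.

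A telltale sign of the gap is that $d\leqslant 4$ is never used except in a case count, whereas it must enter essentially.

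\textbf{The missing idea} is an algebraic criterion combined with the dual of Collins' trick, which is what the paper does.
\begin{itemize}
\item By \cite[Theorem~5.1]{Terwilliger1993JACb}, $\bm{T}(x)$ is dual thin if and only if $E_iA^*E_jA_\ell^*E_i=E_iA_\ell^*E_jA^*E_i$ for all $i,j,\ell$.
\item Since $E_iA^*E_j=0$ unless $|i-j|=1$, and $q_{i,j}^\ell=0$ whenever $i+j+\ell$ is odd, only $j=i\pm1$ with $\ell$ odd matter. Because $d\leqslant 4$, this leaves $\ell\in\{1,3\}$, and $\ell=1$ is trivial.
\item For $\ell=3$, an entrywise computation with \eqref{3-term recurrence*} gives $E_iE_0^*E_{i+1}A^*E_i=b_i^*E_iE_0^*E_i$ and $E_iE_0^*E_{i-1}A^*E_i=c_i^*E_iE_0^*E_i$. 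Both are symmetric, so $E_iE_0^*E_jA^*E_i=E_iA^*E_jE_0^*E_i$.
\item Next, $|X|E_0^*=\sum_\ell A_\ell^*$, and $E_iA_h^*E_j=0$ for even $h$ when $i+j$ is odd. Together these give $|X|E_iE_0^*E_j=E_iA^*E_j+E_iA_3^*E_j$.
\item Substituting this into the symmetric identity and cancelling the $E_iA^*E_jA^*E_i$ terms yields the required identity for $\ell=3$.
\end{itemize}
This is exactly where $d\leqslant 4$ is used: $A^*$ and $A_3^*$ are the only odd dual adjacency matrices.
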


\begin{proof}
Indeed, we can dualize the entire proof of \cite[Lemma~2.2]{Collins1997GC} as well.

Fix a base vertex $x\in X$.
For $0\leqslant \ell\leqslant d$, let $A_{\ell}^*=A_{\ell}^*(x)\in\mathbb{R}^{X\times X}$ be the diagonal matrix whose diagonal entries are given by $(A_{\ell}^*)_{y,y}=|X|(E_{\ell})_{x,y}$ $(y\in X)$.
We abbreviate $A^*=A_1^*$.
It is known that, for $0\leqslant i,j,\ell\leqslant d$, we have $E_iA_{\ell}^*E_j=0$ if and only if $q_{i,j}^{\ell}=0$; cf.~\cite[Lemma~3.2]{Terwilliger1992JAC}.
We also note that $q_{i,j}^{\ell}=0$ whenever $i+j+\ell$ is odd since $E$ is $Q$-bipartite; see, e.g.,~\cite[Lemma~2.1.1]{Dickie1995D}.
By \cite[Theorem~5.1]{Terwilliger1993JACb}, $\bm{T}(x)$ is dual thin if and only if
\begin{equation}\label{Terwilliger's matrix identities}
    E_iA^*E_jA_{\ell}^*E_i=E_iA_{\ell}^*E_jA^*E_i \qquad (0\leqslant i,j,\ell\leqslant d).
\end{equation}
If $|i-j|\ne 1$, then $q_{i,j}^1=q_{j,i}^1=0$, so that $E_iA^*E_j=E_jA^*E_i=0$ and \eqref{Terwilliger's matrix identities} holds.
Hence, we assume that $j\in\{i+1,i-1\}$.
Now, if $\ell$ is even, then $q_{i,j}^{\ell}=q_{j,i}^{\ell}=0$, so that $E_iA_{\ell}^*E_j=E_jA_{\ell}^*E_i=0$ and \eqref{Terwilliger's matrix identities} holds.
Hence, we also assume that $\ell$ is odd, i.e., $\ell\in\{1,3\}$.
If $\ell=1$, then \eqref{Terwilliger's matrix identities} clearly holds, so we finally consider the case where $\ell=3$.
To this end, we fist show the following:
\begin{equation}\label{intermediate identity 1}
    E_iE_0^*E_{i+1}A^*E_i=b_i^*E_iE_0^*E_i, \qquad E_iE_0^*E_{i-1}A^*E_i=c_i^*E_iE_0^*E_i.
\end{equation}
We prove the left identity above.
The right identity is proved in exactly the same manner.
For $y,z\in X$, we have
\begin{align*}
    (E_iE_0^*E_{i+1}A^*E_i)_{y,z} &= \sum_{u,v\in X}(E_i)_{y,u}(E_0^*)_{u,u}(E_{i+1})_{u,v}(A^*)_{v,v}(E_i)_{v,z} \\
    &= |X|(E_i)_{y,x}\sum_{v\in X} (E_{i+1})_{x,v}E_{x,v}(E_i)_{v,z} \\
    &= |X|(E_i)_{y,x}\sum_{v\in X}(E_{i+1}\circ E)_{x,v}(E_i)_{v,z} \\
    &= (E_i)_{y,x}\sum_{v\in X} (b_i^*E_i+c_{i+2}^*E_{i+2})_{x,v}(E_i)_{v,z} \\
    &= b_i^*(E_i)_{y,x}(E_i)_{x,z} \\
    &= b_i^*\sum_{u\in X} (E_i)_{y,u}(E_0^*)_{u,u}(E_i)_{u,z} \\
    &= b_i^*(E_iE_0^*E_i)_{y,z},
\end{align*}
where we have used \eqref{3-term recurrence*}.
This proves the identity.
Note that the RHS in each of the identities in \eqref{intermediate identity 1} is symmetric, from which it follows that
\begin{equation}\label{intermediate identity 2}
    E_iE_0^*E_jA^*E_i=E_iA^*E_jE_0^*E_i.
\end{equation}
By $E_0+E_1+\cdots+E_d=I=A_0$, we have $A_0^*+A_1^*+\cdots+A_d^*=|X|E_0^*$.
By this and since $E_iA_h^*E_j=E_jA_h^*E_i=0$ for even $h$, we have
\begin{equation}\label{intermediate identity 3}
    E_iE_0^*E_j=\frac{1}{|X|}(E_iA^*E_j+E_iA_3^*E_j), \quad E_jE_0^*E_i=\frac{1}{|X|}(E_jA^*E_i+E_jA_3^*E_i).
\end{equation}
Combining \eqref{intermediate identity 2} and \eqref{intermediate identity 3}, we obtain \eqref{Terwilliger's matrix identities} for $j\in\{i+1,i-1\}$ and $\ell=3$.
It follows that $\bm{T}(x)$ is dual thin.
Since $x$ is arbitrary, $(X,\mathcal{R})$ is dual thin, as desired.
\end{proof}

\section*{Acknowledgments}
JL was a JSPS International Research Fellow at Tohoku University at the time of this work.
JL, AM, SS, and HT were respectively supported by JSPS KAKENHI Grant Numbers \href{https://kaken.nii.ac.jp/en/grant/KAKENHI-PROJECT-24KF0099/}{JP24KF0099}, \href{https://kaken.nii.ac.jp/en/grant/KAKENHI-PROJECT-25K07095/}{JP25K07095}, \href{https://kaken.nii.ac.jp/en/grant/KAKENHI-PROJECT-22K03410/}{JP22K03410}, and \href{https://kaken.nii.ac.jp/en/grant/KAKENHI-PROJECT-23K03064/}{JP23K03064}.
This work was also partially supported by the Research Institute for Mathematical
Sciences, an International Joint Usage/Research Center located in Kyoto
University.

\appendix

\section{SageMath code}
\label{sec: SageMath codes}

Below is the SageMath code we used to find the polynomial $\tilde{h}_2(q)$ in Lemma~\ref{less than 2}:

\medskip

\begin{verbatim}
R.<s,q> = PolynomialRing(QQ)  # s:=s^*
phi2 = ((s*q^2-1)*(s*q^3-1)*(s*q^6-1)*(s^2*q^12-1)
        -2*(s*q^4-1)*(s*q^5-1)*(s*q^4-1)*(s^2*q^10-1))
psi = q^8*(2*q+1)*s^2-q^2*(q^5-1)*s-q-2
I = ideal(phi2,psi)
J = I.elimination_ideal(s)
th2 = J.gens()[0]
th2.factor()
\end{verbatim}

\medskip
\noindent
The polynomials $\tilde{h}_1(q),\tilde{h}_{10}(q)$, and $\tilde{h}_9(q)$ in Lemmas~\ref{L2 unequal to 1}, \ref{less than 10}, and \ref{greater than 9} are computed similarly.

\bigskip
\bigskip

\end{document}